\documentclass[12pt]{article}
\usepackage{amsthm}
\usepackage{amsmath}
\usepackage{amsfonts}
\usepackage{graphicx}
\usepackage{latexsym}
\usepackage{amssymb}
\usepackage{enumerate}
\usepackage{comment}
\usepackage{array,booktabs,longtable}

\usepackage[hidelinks]{hyperref}
\usepackage[nameinlink,capitalise,noabbrev]{cleveref}
\usepackage{xcolor}
\newcommand{\mmod}{{\mbox{mod}}}

\newtheorem{theorem}{Theorem}
\newtheorem{lemma}{Lemma}
\newtheorem{corollary}{Corollary}

\newtheorem{proposition}{Proposition}

\numberwithin{theorem}{section}
\numberwithin{lemma}{section}
\numberwithin{corollary}{section}
\numberwithin{proposition}{section}

\begin{document}

\bibliographystyle{plain}

\title{The Last Seven Open Radii for Perfect Codes\\ in the Johnson Scheme}
\author{
{\sc Xiande Zhang}\thanks{School of Mathematical Sciences
University of Science and Technology of China, Hefei, Anhui, 230026, China,
e-mail:{\tt drzhangx@ustc.edu.cn}.} \and
{\sc Wenjie Zhong}\thanks{School of Mathematical Sciences
University of Science and Technology of China, Hefei, Anhui, 230026, China,
e-mail:{\tt 3083848454@qq.com}.}}
\date{}\maketitle

\begin{abstract}

 Delsarte (1973) conjectured that there are
no nontrivial perfect codes in the Johnson scheme.
In this paper, we prove that there are no nontrivial $e$-perfect codes in the Johnson scheme for $e\in\{1,2,4,9,10,12,16\}$. This paper complements \emph{Perfect Codes in the Johnson Scheme Hardly Exist}, thus proving Delsarte's conjecture completely.
\end{abstract}
\section{Introduction}

Perfect codes are extremal error-correcting codes whose metric balls form a partition of the ambient space. Their classification in the Hamming scheme is a classical topic in coding theory; see, for example, \cite{McSl77}. The corresponding problem for constant-weight codes is naturally formulated in the Johnson scheme.

For integers \(0\le w\le n\), the Johnson graph \(J(n,w)\) has vertex set \(\binom{[n]}{w}\) and two vertices are adjacent when their intersection has size \(w-1\). Equivalently, the Johnson distance between \(X,Y\in\binom{[n]}{w}\) is
\[
d(X,Y)=w-|X\cap Y|.
\]
A code \({C}\subseteq\binom{[n]}{w}\) is called \(e\)-perfect if the balls of Johnson radius \(e\) centred at the codewords of \({C}\) partition \(\binom{[n]}{w}\). Since complementation gives an isomorphism \(J(n,w)\cong J(n,n-w)\), we may assume throughout that \(n\ge 2w\) and write
\[
n=2w+\delta,\qquad \delta\ge 0.
\]

There are three standard families of trivial perfect codes: the whole vertex set, which is \(0\)-perfect; a singleton, which is \(w\)-perfect; and, when \(n=2w\) and \(w\) is odd, a complementary pair of \(w\)-subsets, which is \((w-1)/2\)-perfect.
Delsarte conjectured that these are the only perfect codes in the Johnson scheme \cite{Del73}.
This paper complements \cite{johnson-first}. Our main result excludes the seven radii $e=1,2,4,9,10,12,16$ left open there.

\begin{theorem}[Main theorem]\label{thm:main-result}
There is no nontrivial \(e\)-perfect code in any Johnson scheme \(J(n,w)\) for
\[
e\in\{1,2,4,9,10,12,16\}.
\]
\end{theorem}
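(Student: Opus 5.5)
We would attack the theorem through the classical necessary conditions for perfect codes in \(J(n,w)\), handling each of the seven radii separately but along one common scheme. Write \(n=2w+\delta\) as above. The first tool is the sphere-packing identity: an \(e\)-perfect code \(C\) exists only if the ball size
\[
\Phi_e(n,w)=\sum_{i=0}^{e}\binom{w}{i}\binom{n-w}{i}
\]
divides \(\binom{n}{w}\). The second is Roos' bound \(n\le (2e+1)(w-1)/e\) (equivalently \(w\) is bounded below in terms of \(\delta\) and \(e\)). The third is Lloyd's theorem: the Lloyd polynomial of degree \(e\), obtained from the dual Hahn/Eberlein polynomials \(\sum_{i\le e}E_i(x)\), must have \(e\) distinct integer zeros in the admissible range. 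For fixed \(e\) this polynomial has explicit coefficients in \(w,\delta\).

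The first step is to reduce to a finite or structured set of parameters for each \(e\). Following Etzion and Gordon (and the arguments of \cite{johnson-first}), we would use the divisibility and Roos conditions to show that \(w\) must lie in a restricted interval relative to \(\delta\). Combining this with the integrality of the Lloyd roots, in particular the integrality of their sum and product read off from the polynomial coefficients, we would force \(\delta\) to satisfy polynomial congruences. For \(e=1\) this is already known territory: \(\Phi_1=1+w(w+\delta)\) must divide \(\binom{2w+\delta}{w}\), and Gordon's results exclude \(w\) up to a large bound. For \(e=2\), the Lloyd polynomial is quadratic, and its discriminant must be a perfect square. This yields a Pell-type or elliptic diophantine equation in \((w,\delta)\), whose integer points we would determine.

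For the larger radii \(e\in\{4,9,10,12,16\}\), we would exploit the prime factors of \(\Phi_e\). The key observation is that \(\Phi_e(n,w)\) contains a large factor coprime to small primes, while \(\binom{n}{w}\) has bounded \(p\)-adic valuation: \(v_p\binom{n}{w}\le \log_p n\) by Kummer's theorem. We would find a prime \(p\) dividing \(\Phi_e\) with high multiplicity, or exceeding \(n\), and derive a contradiction. This mirrors the strategy that presumably settled the other radii in \(\binom{}{}\)\cite{johnson-first}. The seven radii in the theorem are exactly those where that generic argument failed. So for them we would need a supplementary argument: either a sharper use of the Lloyd roots (the roots must be integers in \([1,w]\) with prescribed symmetric functions), or a computer-assisted search over the finitely many remaining \((w,\delta)\), made possible by explicit bounds.

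The main obstacle will be converting these conditions into an \emph{effective} finite bound on \(w\) for each \(e\), so that the residual cases can be checked exhaustively. Our first attempt would be to apply the Lloyd condition asymptotically: the roots approximate zeros of a scaled Hahn polynomial, and the requirement that they be integers forces \(\delta\) to be at most polynomial in \(w\). We would then feed this into the \(p\)-adic divisibility argument, using Bertrand-type prime gaps, to get explicit numeric bounds. Below those bounds we would verify non-divisibility or non-integrality by direct computation. The hardest radii are likely \(e=1\) and \(e=2\). There the Lloyd polynomial has too few roots to be restrictive, and we would have to rely on a genuinely number-theoretic argument on \(1+w(w+\delta)\mid\binom{2w+\delta}{w}\).
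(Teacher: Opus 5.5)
Your proposal has a genuine gap: it never produces an argument that closes any of the seven cases. It lists standard necessary conditions (sphere packing, Roos, Lloyd) and then defers the real work to ``a supplementary argument'' or ``a computer-assisted search''.

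\textbf{The missing idea.} You never use the design structure of a perfect code. The code is a $\varphi$-design whose strength redundancy $d=w-\varphi$ satisfies $d\le e\sqrt w$ for $e\ge2$. For $e=1$, the Etzion--Schwartz strength equation $w=(d-1)(\delta+d)+1$ plays this role. The resulting chain divisibilities force two things:
\begin{itemize}
\item $\Phi_e(n,w)$ is squarefree;
\item every prime $p\mid\Phi_e$ has a power $p^{s_p}$ in the short window $(\bar\omega,\bar\omega+d]$, where $\bar\omega=n-w$.
\end{itemize}
Your Kummer bound $v_p\binom nw\le\log_p n$ is far weaker. It only says that prime powers in $\Phi_e$ are at most $n$, and you cannot contradict that without controlling the factorization of $\Phi_e$, which is the hard part.

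\textbf{How the paper uses localization.} It compares two estimates for the reciprocal mass $S=\sum_{p\mid\Phi_e}1/s_p$.
\begin{itemize}
\item Lower bound: localization gives $S\ge\log\Phi_e/\log M$, which is roughly $2e$ because $\Phi_e\approx(w\bar\omega)^e/(e!)^2$.
\item Upper bound: at most $e$ exponents equal $1$. Two distinct odd primes cannot have exponents sharing a common factor $g\ge3$, since two $g$-th powers of distinct odd integers do not fit in a window of length $e\sqrt{\bar\omega}$. Laurent's two-logarithm estimate caps the exponents; it is anchored at a prime forced into $\Phi_e$ by the modulo-$p$ criterion. Hence $S-e\lesssim e/8+\log\log(\mathrm{cap})+O(1)$.
\end{itemize}
These bounds are incompatible for $e=4,9,10,12,16$. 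For $e=2$, localization turns the problem into a near-coincidence $0<|2^u-3^v|<d\le2\sqrt{\bar\omega}$. This is settled by the exponent cap and Legendre's criterion on convergents of $\log2/\log3$. For $e=1$, the strength equation factors $\Phi_1=ML$ with $M=\bar\omega+d$ and $L=w-d+1$. Denominator arguments on the separate masses of $M$ and $L$ then show that $M$ is prime and that $\log\bar\omega<125000$.

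\textbf{Why your proposed endgame would fail.} Your plan of an effective finite bound on $w$ followed by exhaustive search over $(w,\delta)$ is not feasible. Any upper bound on $w$ that linear forms in logarithms can deliver is astronomically large; for $e=1$ the paper only reaches $\log\bar\omega<125000$. The known direct exclusions stop at $n\le40000$ for $e=2$ and $n<2^{250}$ for $e=1$. The finite computations the paper actually performs are over exponents, not parameters. For $e=2$ it checks nine convergent families. For $e=1$ it checks pairs $(u,q)$ to certify that $2^u$ is never within about $2^{u/2}$ of an odd $q$-th power.

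\textbf{The radius-specific plans.} For $e=2$, requiring the quadratic Lloyd polynomial to have integer roots gives a single equation $\Delta(w,\delta)=z^2$ in three unknowns. You give no way to solve it or to show it has finitely many solutions, so Lloyd's theorem alone does not reduce $e=2$ to a finite check. For $e=1$, falling back on $1+w(w+\delta)\mid\binom{2w+\delta}{w}$ returns you to exactly the formulation that previous work could not resolve. Without the strength equation and the localization of prime powers, there is no mechanism for a contradiction.
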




The paper is organized as follows. \cref{subsec:section5-quoted-results} lists the known results as propositions, with their original source numbers. 
\cref{sec:prep} provides two useful lemmas. \cref{sec:complete-exclusion} excludes $e=2,9,10,12,16$,  \cref{sec:no-four-perfect-codes} excludes $e=4$ and \cref{sec:no-one-perfect-codes} excludes $e=1$.

\section{Known results and tools}\label{subsec:section5-quoted-results}
In this section, we collect all results that will be used in this paper. Most of which are from \cite{johnson-first}. The perfect-code statements are used with  $n\ge2w$.

The size of a ball of Johnson radius \(e\) is
\[
\Phi_e(n,w)=\sum_{i=0}^{e}\binom{w}{i}\binom{n-w}{i}.
\]
The sphere-packing identity therefore gives the elementary divisibility \(\Phi_e(n,w)\mid\binom{n}{w}\).
A different source of necessary conditions comes from the combinatorial structures contained in a perfect code. In particular, suitable families of codewords form embedded Steiner systems, whose parameter restrictions impose strong bounds on \(n\) and \(w\). See \cite{Etz96,Etz01,Etz06,Etz22,EtvL01,EtSc04}.

The strength $\varphi$ is the largest $t$ for which the code forms a $t$-design, and $d=w-\varphi$ is the strength redundancy.
Throughout, perfect codes are assumed to be nontrivial unless explicitly stated otherwise, and all logarithms are natural. 

\subsection{Previous results on perfect codes}
\begin{proposition}[Theorems~15 and~16 of \cite{EtSc04}]
\label{prop:lower bound of w}
Assume that there exists an $e$-perfect code in J$(n,w)$.
\begin{itemize}
\item If \(n>2w\) and \(n\) is odd, then \(w>\frac{e(e+1)(e+2)}{2}+2e+1\).
\item If \(n>2w\) and \(n\) is even, then \(w>e(e+1)(e+2)+2e+1\).
\item If \(n=2w\), then \(w>2e^2+4e+1\).
\end{itemize}
\end{proposition}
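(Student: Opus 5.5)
The plan is to combine two standard necessary conditions for an $e$-perfect code $C\subseteq\binom{[n]}{w}$: Delsarte's Lloyd-type theorem and the design structure that $C$ inherits.

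First, the sphere-packing identity together with Delsarte's theory shows that the \emph{Lloyd polynomial} $L_e(x)$ of degree $e$ must have $e$ distinct roots. This is the polynomial whose evaluation at the eigenvalue indices $x=i$ of $J(n,w)$ gives $\sum_{j\le e}$ of the Eberlein polynomials $E_j(i)$. Its roots must lie among the integers $1,\dots,w$. I would write $L_e$ explicitly, up to a constant factor, as a hypergeometric-type sum in $x$ with coefficients polynomial in $n$ and $w$. I would then read off, via Vieta, the sum $\sum_k x_k$ and the sum of pairwise products of its roots $x_1<\dots<x_e$ as rational functions of $n,w,e$.

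Second, I would use the fact that $C$ is a $t$-design with $t\ge e$. The aim is a second relation between the roots that involves the parity of $n$. Concretely, I would fix a codeword $c$ and count the words at distance exactly $e+1$ from $c$. Each such word is covered by a unique codeword $c'$, and $c'$ must lie at distance exactly $2e+1$ from $c$. For a pair at distance $2e+1$, the number of words at distance $e+1$ from $c$ and distance $e$ from $c'$ is a fixed constant, $\binom{2e+1}{e}^2$-type. This yields
\[
\binom{w}{e+1}\binom{n-w}{e+1}\equiv 0 \pmod{\text{that constant}}.
\]
It also gives an exact count of the codewords at distance $2e+1$ from $c$. Repeating the count at distance $e+2$ gives a second exact equation. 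Combining the two, the integrality of these counts forces a congruence on $n$ modulo $2$ that distinguishes the odd and even cases. This explains the factor-of-two difference between $\frac{e(e+1)(e+2)}{2}$ and $e(e+1)(e+2)$.

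Third, I would bound the roots. Since the $x_k$ are distinct integers, $\sum_k x_k\ge\binom{e+1}{2}$, and a second-moment inequality holds for $\sum_k x_k^2$. Substituting the Vieta expressions and the parity/integrality constraint from the counting step should leave a quadratic inequality in $w$. Solving it should give $w>\frac{e(e+1)(e+2)}{2}+2e+1$ for odd $n>2w$ and $w>e(e+1)(e+2)+2e+1$ for even $n>2w$. In the case $n=2w$ the polynomial is symmetric, so its roots pair up and only a quadratic rather than cubic dependence on $e$ survives, yielding $w>2e^2+4e+1$.

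The main obstacle is the second step: extracting a clean parity-sensitive constraint from the local counting, strong enough to produce the cubic bound in $e$. If the direct counting is too weak, I would instead use the embedded Steiner systems of Etzion and Schwartz. Fixing a $(w-e-1)$-subset $S$ and looking at the codewords containing $S$ and lying in a suitable neighbourhood gives a Steiner system on the complement of $S$. Its divisibility conditions and Fisher-type inequality replace the integrality argument, and the parity of $n$ again enters through the divisibility conditions.
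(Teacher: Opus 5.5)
The paper gives no proof of this proposition; it quotes it from Etzion--Schwartz. Taken on its own, your sketch has a genuine gap. It never identifies where a term of order $e^3$, or any dependence on the parity of $n$, comes from, and the step meant to supply it cannot do so.

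The Lloyd polynomial $\sum_{j\le e}E_j$ has degree $e$ in $\lambda=x(n+1-x)$, not in $x$. So Vieta controls the $\lambda_k$, which in the notation of \cref{lem:section5-Lloyd-factorization} are $\lambda_k=w({\bar\omega}+1)-h_k(h_k+\delta+1)$. Comparing its two top coefficients gives
\[
\sum_{k=1}^{e}h_k(h_k+\delta+1)=e^2(w-1)+\binom e2\delta-\frac{e(e-1)(2e-1)}3 .
\]
Since the $h_k$ are distinct nonnegative integers, the left side is at least its value at $h_k=k-1$. The resulting inequality is $e^2(w-e)\ge0$: the $\delta$-terms cancel and you get only $w\ge e$. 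You give no computation showing that a second-moment inequality does better.

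Step~2 is not a mechanism either. The count at distance $e+1$ yields only $\binom{2e+1}{e}^2\mid\binom w{e+1}\binom{n-w}{e+1}$. Integrality cannot ``force a congruence on $n$ modulo~$2$'', because the parity of $n$ is a hypothesis. Finally, the Lloyd polynomial of $J(n,w)$ is invariant under $x\mapsto n+1-x$ for every $n$. So the pairing of roots is not special to $n=2w$ and does not explain $2e^2+4e+1$.

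The missing idea is visible in the shape of the bounds. Each equals $e\delta_0+2e+1$. This is the strict Roos bound of \cref{prop:strict-roos}, which is equivalent to $w>e\delta+2e+1$ with $\delta=n-2w$, evaluated at the smallest admissible positive $\delta_0$. What is needed is therefore a lower bound on $\delta$, obtained by comparing congruences satisfied by $w$ and by $n-w$ separately. The conditions quoted in \cref{prop:embedded-steiner} suffice:
\begin{itemize}
\item $j=1$ in \eqref{eq:embedded-steiner-shift} gives $e+1\mid w-e$ and $e+1\mid{\bar\omega}-e$, hence $e+1\mid\delta$.
\item \eqref{eq-congruence-e+2} gives $e+2\mid 2\delta$.
\item Together these give $\binom{e+2}2\mid\delta$.
\item If $\delta$ is even, this improves to $(e+1)(e+2)\mid\delta$. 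This is automatic when $e$ is odd or $4\mid e$. For $e\equiv2\pmod{4}$, the case $j=2$ forces $w\equiv{\bar\omega}\equiv e\pmod{(e+1)(e+2)}$.
\end{itemize}
Hence $\delta\ge\binom{e+2}2$ when $n$ is odd, and $\delta\ge(e+1)(e+2)$ when $n>2w$ is even. Combined with the Roos bound, this gives exactly the first two bullets; the parity of $n$ enters only through the parity of $\delta$.

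Your fallback via embedded Steiner systems points at the right divisibility conditions. However, you apply them to one Steiner system at a time, alongside Fisher- or Tits-type inequalities. For a single $S(e+1,2e+1,w)$ these give at most a quadratic bound such as $w\ge(e+1)(e+2)$. What is missing is the comparison of $w$ with $n-w$ and the use of the Roos bound.

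For $n=2w$ this mechanism is vacuous, and Roos alone gives only $w>2e+1$. The third bullet therefore needs a separate argument, which your proposal does not contain.
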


\begin{proposition}[Strict Roos bound: Theorem~13 of \cite{EtSc04}]\label{prop:strict-roos}
If a nontrivial \(e\)-perfect code exists in \(J(n,w)\), then
\[
n<(w-1)\frac{2e+1}{e}.
\]
\end{proposition}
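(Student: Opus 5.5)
The plan is a local counting argument around a suitably chosen \((w-1)\)-subset \(S\subseteq[n]\). Every \(w\)-set \(S\cup\{x\}\) with \(x\notin S\) is covered by exactly one codeword, and we will read off how these \(n-w+1\) sets are shared out among the codewords. Note that
\[
(w-1)\tfrac{2e+1}{e}-(w-1)=(w-1)\tfrac{e+1}{e},
\]
so the claim is equivalent to \(\frac{n-w+1}{e+1}<\frac{w-1}{e}\).

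\textbf{Choice of \(S\).} Since \(C\) is nontrivial we have \(e<w\), and \(n\ge 2w\) guarantees that points at Johnson distance exactly \(e\) from a codeword exist. So pick a codeword \(c_0\) and a \(w\)-set \(X\) with \(d(X,c_0)=e\), i.e. \(|X\cap c_0|=w-e\). Let \(S=X\setminus\{y\}\) for some \(y\in X\cap c_0\); then \(|c_0\cap S|=w-e-1\).

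\textbf{No codeword is close to \(S\).} Suppose a codeword \(c\) had \(|c\cap S|\ge w-e\). Then \(c\) would cover every \(S\cup\{x\}\), in particular \(X\). Since \(c_0\) also covers \(X\), this forces \(c=c_0\), contradicting \(|c_0\cap S|=w-e-1\). Hence each set \(S\cup\{x\}\) is covered by a codeword \(c\) with \(|c\cap S|=w-e-1\) and \(x\in c\setminus S\). Each such codeword covers exactly the \(e+1\) sets indexed by \(c\setminus S\). So the sets \(c\setminus S\), over the relevant codewords \(c_1,\dots,c_m\), partition \([n]\setminus S\), and
\[
m=\frac{n-w+1}{e+1}.
\]

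\textbf{Packing inside \(S\).} Put \(A_i=c_i\cap S\), which has size \(w-e-1\). The \(c_i\setminus S\) are disjoint and the minimum distance is \(2e+1\), so \(|A_i\cap A_j|=|c_i\cap c_j|\le w-2e-1\). Let \(B_i=S\setminus A_i\), each of size \(e\). Then
\[
|B_i\cap B_j|=(w-1)-2(w-e-1)+|A_i\cap A_j|\le 0,
\]
so the \(B_i\) are pairwise disjoint \(e\)-subsets of \(S\). This gives \(me\le w-1\), which is exactly the non-strict Roos bound \(n\le (w-1)\frac{2e+1}{e}\).

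\textbf{The equality case (main obstacle).} It remains to rule out \(me=w-1\), where the \(B_i\) partition \(S\). My first attempt is to vary the removed point \(y\) over \(X\cap c_0\), and also \(X\) over the sphere of radius \(e\) about \(c_0\), applying the same partition structure to each resulting \(S'\). The equality case forces a very rigid configuration: every \((w-1)\)-set near \(c_0\) is split by codewords into blocks of size \(e\) inside and \(e+1\) outside. Comparing two choices \(S,S'\) differing in one element should produce two codewords at distance at most \(2e\), or a \((w-1)\)-set covered twice.

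If this is not enough, I would fall back on a counting argument. Counting pairs (codeword, \((w-1)\)-set) with intersection \(w-e-1\) in two ways, using \(|C|\Phi_e(n,w)=\binom nw\), should show that equality in \(me=w-1\) for all admissible \(S\) is incompatible with the existence of \((w-1)\)-sets lying within distance \(e-1\) of some codeword. For such \(S\) the count \(m\) differs, and that would give the strict inequality.
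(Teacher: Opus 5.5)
\textbf{There is a genuine gap: the strict inequality is not proved.}

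Your first three steps are correct; they are Roos's original argument. With $S=X\setminus\{y\}$, no codeword meets $S$ in $w-e$ or more points. The $m=(n-w+1)/(e+1)$ codewords with $|c\cap S|=w-e-1$ split $[n]\setminus S$ into $(e+1)$-sets, and minimum distance $2e+1$ makes the $e$-sets $S\setminus c_i$ pairwise disjoint, so $me\le w-1$. The paper does not prove this proposition; it quotes Theorem~13 of Etzion--Schwartz, whose only addition to Roos's bound is the strict inequality. That is exactly the part your proposal leaves open: the last paragraph lists arguments that ``should'' work, but none is carried out. (For the paper's own uses, namely \eqref{eq:section5-w-over-z} and $\bar\omega<\frac54 w$ for $e=4$, your non-strict bound already suffices, since it gives $\bar\omega\le\frac{(e+1)w-(2e+1)}{e}<\frac{(e+1)w}{e}$. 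The proposition as stated, however, needs strictness.)

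\textbf{Why your sketches cannot work as stated.} Equality $me=w-1$ actually occurs. Take the trivial code $\{c_0,\overline{c_0}\}$ in $J(2w,w)$ with $w$ odd and $e=(w-1)/2$. Every admissible $S$ meets both codewords in exactly $e=w-e-1$ points, so $m=2$, $me=w-1$, and $n=(w-1)\frac{2e+1}{e}$. Hence any proof of strictness must use nontriviality essentially, not merely $e<w$, and neither sketch does. Conversely, $m=2$ forces $w=2e+1$, $n=2w$ and $\Phi_e(2w,w)=\frac12\binom{2w}{w}$, hence $|C|=2$ and the code is this trivial pair. So what must be excluded is equality with $m\ge 3$, and nothing in your proposal addresses that case.

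\textbf{The counting fallback.} It cannot detect equality. For a non-admissible $S$, the minimum distance already forces that no codeword meets $S$ in exactly $w-e-1$ points. The double count $|C|\binom{w}{e+1}\binom{n-w}{e}=m\,N_{\mathrm{adm}}$, combined with the number $|C|\sum_{k\ge w-e}\binom wk\binom{n-w}{w-1-k}$ of non-admissible $(w-1)$-sets, is equivalent to $|C|\Phi_e(n,w)=\binom nw$. It therefore holds for every $e$-perfect code, whether or not $me=w-1$. Also, ``equality for all admissible $S$'' is automatic, because $m$ does not depend on $S$. The complementary pair has non-admissible $(w-1)$-sets and still attains equality, so the incompatibility you predict is false.

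\textbf{The swapping idea.} It does not close the gap by itself either. Replace $a\in S\setminus c_1$ by $x\in c_2\setminus S$. This gives an admissible $S'$ whose codewords at level $w-e-1$ are $c_1$, $c_2$ and $m-2$ new ones $c'$. The minimum distance only requires each new $S'\triangle c'$ to meet each old $S\triangle c_i$ ($i\ge3$) in at most one point. That yields the condition $m\ge 2e+3$, not two codewords at distance at most $2e$ and not a doubly covered set. A genuinely global argument that excludes $m\ge3$ is still missing.
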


\begin{proposition}[Propositions~3.2, 3.3 and~3.9, Eq.~(8) of \cite{johnson-first}]\label{prop:embedded-steiner}
If a nontrivial \(e\)-perfect code exists in \(J(n,w)\), then  for every \(0\le{j}\le e+1\),
\begin{equation}\label{eq:embedded-steiner-shift}
 \binom{e+{j}}{{j}}\mid\binom{w-e+{j}-1}{{j}},\qquad
 \binom{e+{j}}{{j}}\mid\binom{n-w-e+{j}-1}{{j}}.
\end{equation}
Further, \begin{equation}\label{eq-congruence-e+2}
2(w+2)\equiv 2(n-w+2)\equiv0\pmod{e+2}.
\end{equation}

\end{proposition}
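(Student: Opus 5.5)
The plan is to derive both divisibility families in \eqref{eq:embedded-steiner-shift} from Steiner systems that sit inside the code. We fix a codeword \(c\in C\) and look at the words at Johnson distance exactly \(e+1\) from \(c\). Each such word has the form \(X=(c\setminus A)\cup B\) with \(A\in\binom{c}{e+1}\) and \(B\in\binom{[n]\setminus c}{e+1}\).

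First, the local structure around \(c\).
\begin{itemize}
\item Since \(d(X,c)=e+1>e\), the word \(X\) is covered by a unique codeword \(c'\neq c\) with \(d(X,c')\le e\).
\item Balls of radius \(e\) around distinct codewords are disjoint, so \(d(c,c')\ge 2e+1\).
\item The triangle inequality gives \(d(c,c')\le d(c,X)+d(X,c')\le 2e+1\), so \(d(c,c')=2e+1\) and \(d(X,c')=e\).
\item Write \(c'=(c\setminus A')\cup B'\) with \(|A'|=|B'|=2e+1\). Then \(d(X,c')=|c'\setminus X|\), and a direct count shows this equals \(e\) exactly when \(A\subseteq A'\) and \(B\subseteq B'\).
\end{itemize}
Hence the pairs \((A',B')\) coming from codewords at distance \(2e+1\) from \(c\) have the following property: every pair \((A,B)\) of \((e+1)\)-subsets is contained componentwise in exactly one of them.

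Second, we extract Steiner systems.
\begin{itemize}
\item Fix \(B\in\binom{[n]\setminus c}{e+1}\), and consider the blocks \(A'\) arising from codewords with \(B'\supseteq B\).
\item Every \(A\in\binom{c}{e+1}\) lies in exactly one of these blocks.
\item Two distinct such codewords cannot share the same \(A'\). Otherwise every \((e+1)\)-subset of \(A'\), paired with \(B\), would be covered twice.
\item So these blocks form a Steiner system \(S(e+1,2e+1,w)\) on the point set \(c\). Symmetrically, fixing \(A\) gives an \(S(e+1,2e+1,n-w)\) on \([n]\setminus c\).
\end{itemize}
Nontriviality, for instance via \cref{prop:lower bound of w}, ensures that \(w\) and \(n-w\) are large enough for these systems to be genuine.

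Third, we apply the standard necessary conditions for a Steiner system \(S(t,k,v)\): for each \(0\le i\le t\), the number \(\binom{v-i}{t-i}/\binom{k-i}{t-i}\) of blocks through an \(i\)-set must be an integer. With \(t=e+1\), \(k=2e+1\), \(v=w\) and \(i=e+1-j\), this reads \(\binom{e+j}{j}\mid\binom{w-e-1+j}{j}\) for \(0\le j\le e+1\), which is the first relation. The second follows in the same way with \(v=n-w\).

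The main obstacle is the congruence \eqref{eq-congruence-e+2}. It does not follow from the single-system divisibilities alone. I would first try combining the \(j=e+1\) case with the \(j=e\) case: the quotient \(\binom{w}{e+1}/\binom{2e+1}{e+1}\) must be compatible with the block count through a point. If the resulting arithmetic, prime by prime for the primes dividing \(e+2\), does not force \(2(w+2)\equiv 0\pmod{e+2}\), I would fall back on a finer count. Namely, I would count pairs of blocks of the \(S(e+1,2e+1,w)\) meeting in exactly \(e\) points, using the derived design at an \(e\)-set, whose block size is \(e+1\). One expects these numbers to be integral only when \(e+2\) divides \(2(w+2)\). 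The same argument applied to the complementary system gives the condition on \(n-w\).
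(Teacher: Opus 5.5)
\textbf{There is a genuine gap: the congruence \eqref{eq-congruence-e+2} is not proved, and neither route you suggest can prove it.}

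Your proof of \eqref{eq:embedded-steiner-shift} is correct. It is the standard embedded-Steiner-system argument; the paper only quotes this proposition from its companion, so there is no in-paper proof to compare. Each pair $(A,B)$ of $(e+1)$-sets lies componentwise in exactly one pair $(A',B')$ coming from a codeword at distance $2e+1$ from $c$. Call the set of these pairs $\mathcal{D}$.

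Why your routes fail: both use only the arithmetic of a single $S(e+1,2e+1,w)$, namely its block counts, derived designs and block intersections. Anything derived from that alone holds for every existing such system. But STS$(9)$ exists with $2(9+2)\not\equiv0\pmod 3$, and the inversive plane $S(3,5,17)$ exists with $2(17+2)\not\equiv0\pmod 4$. (For $e+2$ prime, the Steiner conditions allow $w\equiv-3$ or $-2\pmod{e+2}$; the congruence is exactly the exclusion of $-3$.) So your expectation that such counts are integral only when $(e+2)\mid 2(w+2)$ is false.

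What is missing is the interaction between different codewords, which your set-up already gives you:
\begin{itemize}
\item any two members of $\mathcal{D}$ satisfy $|A_1'\cap A_2'|+|B_1'\cap B_2'|\le 2e+1$;
\item every word at distance $e+2$ from $c$ is covered by a codeword at distance $2e+1$ or $2e+2$ from $c$.
\end{itemize}

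One way to use this is a refined count. Fix an $(e+2)$-set $B\subseteq[n]\setminus c$ and an $e$-set $A_0\subseteq c$. Sort the $\binom{w-e}{2}$ words $(c\setminus A)\cup B$ with $A_0\subseteq A$, $|A|=e+2$, by their covering codeword, and put $\lambda=(w-e)/(e+1)$.
\begin{itemize}
\item At most one member of $\mathcal{D}$ has $A'\supseteq A_0$ and $B'\supseteq B$. Let $X_0\in\{0,1\}$ count it; it covers $\binom{e+1}{2}+(e+1)(w-2e-1)$ of these words.
\item Members with $|B'\cap B|=e+1$ and $A'\supseteq A_0$ number $(e+2)(\lambda-X_0)$, and each covers $\binom{e+1}{2}$.
\item Every other covering codeword covers exactly $\binom{e+2}{2}$. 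These are the members with $B'\supseteq B$ and $|A'\cap A_0|=e-1$, and the codewords at distance $2e+2$.
\end{itemize}
Reducing modulo $\binom{e+2}{2}$ and using $w+2\equiv-\lambda\pmod{e+2}$ gives
\[
\lambda(\lambda+1)\equiv 2\lambda X_0\pmod{e+2}.
\]
Suppose $X_0=1$ for every choice of $(A_0,B)$. Then the blocks over each $B$ form an $S(e,2e+1,w)$, and double counting forces $(e+1)(n-w)=ew+2e+1$. This is impossible for $n\ge 2w$ and $w>2e+1$. Hence both values of $X_0$ occur, which gives $2\lambda\equiv0$, that is, $2(w+2)\equiv0\pmod{e+2}$.

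The $n-w$ half is not a verbatim mirror, as your last sentence suggests. There the all-ones case is exactly Roos equality $n=(2e+1)(w-1)/e$, so you also need \cref{prop:strict-roos}.
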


\begin{proposition}[Lloyd factorization: Lemma~6.1  of \cite{johnson-first}]\label{lem:section5-Lloyd-factorization}Assume that there exists an $e$-perfect code in \textup{J}$(n,w)$.
There are distinct integers \(0\le h_1<\cdots<h_e=d-1\) such that
\begin{equation}\label{eq:section5-Lloyd-factorization}
(e!)^2\Phi_e(n,w)=\prod_{i=1}^e(w-h_i)(n-w+h_i+1).
\end{equation}
\end{proposition}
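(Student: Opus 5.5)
The plan is to derive the identity from Lloyd's theorem for the Johnson scheme, combined with Delsarte's characterization of designs through the dual inner distribution. Write \(\lambda_j=j(n+1-j)\) for \(0\le j\le w\). Because \(n\ge 2w\), the map \(j\mapsto\lambda_j\) is strictly increasing on \(\{0,\dots,w\}\), so eigenspace indices and the values \(\lambda_j\) determine each other. The eigenvalues of the distance-\(i\) relation of \(J(n,w)\) on the \(j\)-th eigenspace are given by the Eberlein (dual Hahn) polynomials \(E_i(j)\). Each \(E_i(j)\) is a polynomial of degree \(i\) in \(\lambda_j\), and its leading coefficient is \((-1)^i/(i!)^2\); this follows from the explicit formula \(E_i(j)=\sum_k(-1)^k\binom{j}{k}\binom{w-j}{i-k}\binom{n-w-j}{i-k}\).

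Next I form the Lloyd polynomial \(L_e(j)=\sum_{i=0}^e E_i(j)\), which is the eigenvalue of the adjacency operator of the ball of radius \(e\). By the previous paragraph, \(L_e\) is a polynomial of degree exactly \(e\) in \(\lambda=\lambda_j\), with leading coefficient \((-1)^e/(e!)^2\). At \(j=0\) we have \(\lambda=0\) and \(L_e(0)=\sum_i\binom{w}{i}\binom{n-w}{i}=\Phi_e(n,w)\).

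Perfectness says \(\mathbf 1_C * \mathbf 1_{B_e}=\mathbf 1\). Projecting onto the \(j\)-th eigenspace for \(j\ge1\) gives \(L_e(j)\,b_j=0\), where \(b_j\) is the dual inner distribution of \(C\). Lloyd's theorem in Delsarte's form then states that \(L_e\), viewed as a polynomial in \(\lambda\), has \(e\) distinct zeros, all among \(\lambda_1,\dots,\lambda_w\). I will invoke this as the classical result. The argument for it is that the \(j\) with \(b_j\ne0\) (\(j\ge1\)) number exactly \(e\), namely the number of nonzero distances of the code; they are all zeros of \(L_e\); and \(L_e\) has degree \(e\). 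Writing these zeros as \(\lambda_{j_1},\dots,\lambda_{j_e}\) with \(j_1>\dots>j_e\), we get
\[
L_e(\lambda)=\frac{(-1)^e}{(e!)^2}\prod_{i=1}^e(\lambda-\lambda_{j_i}).
\]
Evaluating at \(\lambda=0\) gives \((e!)^2\Phi_e(n,w)=\prod_i\lambda_{j_i}\). Setting \(h_i=w-j_i\), we have \(\lambda_{j_i}=(w-h_i)(n-w+h_i+1)\). The \(h_i\) are distinct, and \(0\le h_1<\dots<h_e\le w-1\).

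It remains to identify \(h_e=d-1\), i.e.\ to show that the smallest zero index is \(j_e=\varphi+1\). By Delsarte, \(C\) is a \(t\)-design if and only if \(b_1=\dots=b_t=0\). Maximality of \(\varphi\) therefore gives \(b_{\varphi+1}\ne0\), so \(\varphi+1\) is one of the \(j_i\). Every \(j_i\) carries a nonzero \(b_{j_i}\), so \(b_j=0\) for \(j\le\varphi\) forces \(j_i\ge\varphi+1\) for all \(i\). Hence \(j_e=\varphi+1\) and \(h_e=w-\varphi-1=d-1\).

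The main obstacle is the step asserting that the zeros of \(L_e\) are exactly the indices \(j\ge1\) with \(b_j\ne0\), so that there are precisely \(e\) of them, rather than merely that \(L_e\) vanishes on that support. I would handle it by the standard annihilator argument. The code has \(s\) nonzero distances, with \(s\ge e\) since the minimum distance is \(2e+1\). This gives at least \(s\) nonzero \(b_j\), and all of them are zeros of a nonzero degree-\(e\) polynomial, so \(s\le e\) and equality holds. Once this is in place, the rest is bookkeeping.
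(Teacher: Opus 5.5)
Your overall route is the standard derivation of Lloyd's theorem. The paper does not prove this statement itself; it imports it as Lemma~6.1 of the companion paper. Most of your bookkeeping is right:
\begin{itemize}
\item $\lambda_j=j(n+1-j)$ is strictly increasing for $n\ge 2w$;
\item $L_e$ has degree $e$ in $\lambda$ with leading coefficient $(-1)^e/(e!)^2$, and $L_e(0)=\Phi_e(n,w)$;
\item projecting $\sum_{i\le e}A_i\mathbf 1_C=\mathbf 1$ shows that $L_e$ vanishes at $\lambda_j$ for every $j\in D^*:=\{j\ge1:b_j\ne0\}$;
\item Delsarte's design criterion gives $\min D^*=\varphi+1$.
\end{itemize}
As a minor point, polynomiality of $E_i(j)$ in $\lambda_j$ is better justified by the recurrence $A_1A_i=(i+1)^2A_{i+1}+a_iA_i+b_{i-1}A_{i-1}$ together with $\theta_j=w(n-w)-\lambda_j$ than by the explicit sum.

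The genuine gap is the step you yourself flag as the crux. You identify $|D^*|$ with the number $s$ of nonzero distances in $C$, claim $s\ge e$ because the minimum distance is $2e+1$, and claim $|D^*|\ge s$. Both claims are false:
\begin{itemize}
\item The complementary pair in $J(2w,w)$, $w\ge5$ odd, is $\tfrac{w-1}{2}$-perfect with the single distance $w$, so $s=1<e$. Here $D^*=\{2,4,\dots,w-1\}$ does have $e$ elements.
\item The whole vertex set has $s=w$ but $D^*=\emptyset$.
\item The binary Hamming $[7,4,3]$ code is $1$-perfect with distances $\{3,4,7\}$ but only one nonzero dual index.
\end{itemize}
The degree and the external distance $|D^*|$ of a code are different invariants, so your chain yields nothing beyond the trivial bound $|D^*|\le e$.

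That upper bound is not enough. Unless $|D^*|\ge e$, you cannot conclude that all $e$ roots of $L_e$ lie among the $\lambda_j$, so neither the factorization nor $h_e=d-1$ follows. You can either cite Delsarte's theorem that the covering radius (here $e$) is at most the external distance, or argue directly as follows.
\begin{itemize}
\item Let $\chi=\mathbf 1_C$ and let $\pi_j$ be the projection onto the $j$th eigenspace. Every $A_i\chi=\sum_j E_i(j)\pi_j\chi$ lies in $\mathrm{span}\{\pi_j\chi: j\in\{0\}\cup D^*\}$, which has dimension $1+|D^*|$.
\item For $0\le i\le e$, perfectness makes $A_i\chi$ the indicator of the shell $\{x:d(x,C)=i\}$.
\item These shells are pairwise disjoint and nonempty. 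Each codeword has vertices at every distance $i\le e\le w$, since $n-w\ge w$, and that codeword is the unique one within distance $e$ of them.
\item Hence $A_0\chi,\dots,A_e\chi$ are linearly independent, so $|D^*|\ge e$. Therefore $|D^*|=e$ and the zero set of $L_e$ is exactly $\{\lambda_j:j\in D^*\}$.
\end{itemize}
With this replacement the rest of your argument goes through.
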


\begin{proposition}[Corollary~3.1  of \cite{johnson-first}]\label{cor:str_redund}
Suppose that a nontrivial \(e\)-perfect code in \(J(n,w)\) has strength \(\varphi\), and put \(d=w-\varphi\). If \(e\ge 2\), then
\[
d\le \lfloor e\sqrt{w}\rfloor.
\]
\end{proposition}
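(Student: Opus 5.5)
The plan is to derive the bound from the Lloyd factorization (Proposition~\ref{lem:section5-Lloyd-factorization}) by comparing its two sides. Write $m=n-w\ge w$ and consider the quadratic
\[
f(h)=(w-h)(m+h+1).
\]
Its vertex is at $h=(w-m-1)/2<0$, so $f$ is strictly decreasing on $h\ge 0$.

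First I bound the right-hand side of \eqref{eq:section5-Lloyd-factorization} from above. The $h_i$ are distinct nonnegative integers with $h_1<\cdots<h_e=d-1$, so $h_i\ge i-1$ for every $i\le e-1$. Since $f$ is decreasing, this gives
\[
\prod_{i=1}^e f(h_i)\;\le\;\prod_{i=0}^{e-2} f(i)\cdot f(d-1).
\]
Next I bound the left-hand side from below by its top term (adding the second term only if needed for the constant):
\[
(e!)^2\Phi_e(n,w)\ \ge\ (w)_e\,(m)_e,
\]
where $(x)_e$ is the falling factorial. Comparing the two bounds and dividing by $\prod_{i=0}^{e-1}f(i)=(w)_e\,(m+1)(m+2)\cdots(m+e)$ yields
\[
\frac{f(d-1)}{f(e-1)}\ \ge\ \frac{(m)_e}{(m+1)(m+2)\cdots(m+e)}\ \ge\ 1-\frac{e(e+1)}{m}+(\text{lower order}),
\]
so the right side is about $1-e^2/m$.

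On the other side I expand the numerator and denominator exactly:
\[
f(d-1)=(w-d+1)(m+d)=wm+w-(d-1)(m-w+d),\qquad f(e-1)=(w-e+1)(m+e).
\]
Since $m\ge w$, we have $(d-1)(m-w+d)\ge (d-1)d$. Hence $f(d-1)\le wm+w-d(d-1)$, which makes the ratio roughly $1-d^2/(wm)$. Combining the two ratio estimates and multiplying through by $wm$ gives $d^2\lesssim e^2 w$, i.e.\ $d\le e\sqrt w$. Because $d$ is an integer, this is $d\le\lfloor e\sqrt w\rfloor$.

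The main obstacle is making this comparison exact rather than asymptotic. The error terms (the $e(e+1)$ versus $e^2$, and the $+w$ and $-e$ corrections in $f(e-1)$ and $f(d-1)$) must be absorbed without losing the clean constant $e$. To handle them, I would keep the second term $(e!)^2\binom{w}{e-1}\binom{m}{e-1}$ of $\Phi_e$ in the lower bound, since it contributes a positive correction of order $e^2/(wm)$ that should offset the losses. I would also use the hypothesis $e\ge2$ together with the lower bounds on $w$ from Proposition~\ref{prop:lower bound of w} (so $w$ is large compared with $e^3$), which lets the residual terms be estimated crudely. If the exact inequality still falls short by a small amount, I would instead solve the resulting quadratic inequality in $d$ directly and check that its positive root is below $e\sqrt w+1$.
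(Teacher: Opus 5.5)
There is a genuine gap. It sits where you go from ``roughly $1-d^2/(wm)$'' to $d^2\lesssim e^2w$. Your comparison controls $d(d-1)$ (more precisely $(d-1)(m-w+d)$), not $d^2$. Take $n=2w$, so $f(h)=w(w+1)-h(h+1)$. Your inequality $f(d-1)\ge f(e-1)\prod_{i=0}^{e-1}\frac{w-i}{w+i+1}$, together with $\prod_{i=0}^{e-1}\frac{w-i}{w+i+1}\ge 1-\frac{e^2}{w+1}$, gives $d(d-1)<e^2w+e(e-1)$. Keeping further terms of $\Phi_e$, or evaluating the product exactly, only moves this additive constant by $O(e^4)$. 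Converting $d(d-1)$ into $d^2$ costs an additive $d\approx e\sqrt w$, which is not bounded in terms of $e$. So the method yields only $d<e\sqrt w+\tfrac12+o(1)$, and this misses $d\le\lfloor e\sqrt w\rfloor$ whenever the fractional part of $e\sqrt w$ exceeds about $\tfrac12$. Invoking large lower bounds on $w$ makes the lost term bigger, not smaller. Your fallback, checking that the positive root is below $e\sqrt w+1$, would only give $d\le\lceil e\sqrt w\rceil$.

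This loss is intrinsic to using the product identity while knowing nothing about the other factors beyond $h_i\ge i-1$; it is not an artifact of crude estimates. For $e=2$ and $n=2w$, your inequality $(2!)^2\Phi_2\le f(0)f(d-1)$ with $\Phi_2$ exact is equivalent to $d(d-1)\le 4w-8+\frac{8w-4}{w(w+1)}$. At $w=30$ it admits $d=11$, since $930\cdot 820=762600\ge 760504=4\Phi_2(60,30)$, although $\lfloor 2\sqrt{30}\rfloor=10$. Likewise it admits $d=2k+1>\lfloor 2\sqrt w\rfloor$ at every $w=k^2+k$ with $k\ge 4$. So no sharpening of this one inequality proves the statement; you need a further relation among the $h_i$. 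One source is the next coefficient of the Lloyd polynomial, i.e.\ Vieta for the sum of its roots. For $e=2$, $n=2w$ it gives $h_1(h_1+1)+h_2(h_2+1)=4w-6$. Together with the product identity this pins down $d(d-1)=2w-3+\sqrt{2w^2-6w+5}<(2+\sqrt2)w$, which does imply $d\le 2\sqrt w$. The paper itself gives no proof of the statement; it quotes it from the companion paper, so there is no in-paper argument to compare against.
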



\begin{proposition}[Prime-power localization: Corollary~4.3 of \cite{johnson-first}]
\label{cor:prime factors from chain divisitilities}
If there exists an $e$-perfect code in $\textup{J}(n,w)$, then
\[
\Phi_e(n,w)\mid  \prod_{\substack{p\text{ prime,}\\ n-w<p^s\le n-w+d\text{ for some }s\ge 1}}p.
\]
In particular, \(\Phi_e(n,w)\) is squarefree.
 Moreover, for every prime \(p\mid\Phi_e(n,w)\), there exists an integer \(s\ge 1\), which is called a {\emph{localized exponent}} of $p$, such that
\[
n-w<p^s\le n-w+d.
\]
\end{proposition}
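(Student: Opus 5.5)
The plan is to derive a whole chain of divisibilities for \(\Phi_e(n,w)\) from the design property of a perfect code, and then read off the \(p\)-adic content with Kummer's theorem on binomial coefficients.

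\textbf{Step 1: a chain of divisibilities.} Let \(C\) be the code, so \(|C|=\binom{n}{w}/\Phi_e(n,w)\). Since \(C\) is a \(\varphi\)-design, the number of codewords containing a fixed \(i\)-set is
\[
\lambda_i=|C|\binom{w}{i}\Big/\binom{n}{i}=\binom{n-i}{w-i}\Big/\Phi_e(n,w)
\]
for every \(0\le i\le\varphi=w-d\). Each \(\lambda_i\) is an integer. Writing \(m=n-w\), this says
\[
\Phi_e(n,w)\ \Big|\ \binom{N}{m}\quad\text{for every } N\in[m+d,\,n].
\]
These are the ``chain divisibilities''. Since \(n\ge 2w\), we have \(n-m=w\ge m\), so the range of \(N\) contains \(N=m+j\) for every \(j\) with \(d\le j\le w\), and \(w\ge m\).

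\textbf{Step 2: Kummer.} By Kummer's theorem,
\[
v_p\binom{m+j}{m}=\#\{s\ge1:\ (m\bmod p^s)+(j\bmod p^s)\ge p^s\},
\]
the number of carries when adding \(m\) and \(j\) in base \(p\). Hence
\[
v_p(\Phi_e)\le \min_{d\le j\le w}\#\mathrm{carries}_p(m,j).
\]

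\textbf{Step 3: choosing \(j\).}
\begin{itemize}
\item First take \(j=d\). A carry at level \(s\) means there is a multiple of \(p^s\) in \((m,m+d]\). So if \(p\mid\Phi_e\), some level \(s\) has a multiple \(kp^s\) lying in \((m,m+d]\).
\item To force \(k=1\) and at most one carry, choose \(j=p^a\) (or a small multiple \(cp^a\)) with \(d\le j\le w\), and \(a\) as small as possible. Adding such a \(j\) to \(m\) produces carries only at levels \(\ge a\). Starting at level \(a\), these carries propagate only through a run of digits of \(m\) equal to \(p-1\).
\item Compare this with \(j=d\) and with \(j=d+p^{a'}\)-type choices that break such a run. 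A carry that persists for all admissible \(j\) should be possible only when \(m\) lies just below a pure power, i.e.\ \(m<p^s\le m+d\). In that case it is a single carry.
\end{itemize}
This yields \(v_p(\Phi_e)\le1\), which is squarefreeness, together with the existence of a localized exponent \(s\) with \(n-w<p^s\le n-w+d\).

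\textbf{Step 4: boundary cases via Lloyd.} If no admissible power \(p^a\in[d,w]\) exists (for instance \(p>w\)), use the Lloyd factorization \cref{lem:section5-Lloyd-factorization} instead. Since \(p>w\ge e\), the prime \(p\) does not divide \((e!)^2\), so it must divide some factor \(w-h_i\) or \(n-w+h_i+1\).
\begin{itemize}
\item The factors \(w-h_i\) lie in \([w-d+1,w]\). They are positive and smaller than \(p\), so \(p\) divides none of them.
\item Hence \(p\) divides some \(n-w+h_i+1\in(m,m+d]\). Since \(d\le e\sqrt w\) by \cref{cor:str_redund} (or \(d\le e\) when \(e=1\)), the interval \((m,m+d]\) is much shorter than \(p\). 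Moreover \(p>w\ge m\) means \(2p>m+d\), so the multiple of \(p\) in \((m,m+d]\) is \(p\) itself. This gives the localized exponent \(s=1\).
\item Exactly one \(n-w+h_i+1\) is divisible by \(p\), and \(p^2\) cannot divide it, since \(p^2>m+d\). So again \(v_p\le1\).
\end{itemize}
Putting Steps 3 and 4 together, every prime factor of \(\Phi_e\) appears to the first power and has some \(s\) with \(n-w<p^s\le n-w+d\). This gives the displayed divisibility into the product of such primes.

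The main obstacle is the digit bookkeeping in Step 3. One must exhibit, for every configuration of base-\(p\) digits of \(m\), an admissible \(j\in[d,w]\) whose addition to \(m\) produces at most one carry. One must also show that this carry exists only when a pure power \(p^s\) lies in \((m,m+d]\); in particular, a multiple \(kp^s\) with \(k\ge2\) lying in \((m,m+d]\) must not force a carry for every \(j\). I would first try \(j=p^{a}\) with \(p^{a-1}<d\le p^a\), and, when a run of \((p-1)\)-digits of \(m\) causes extra carries, the alternative \(j=(p^a\text{-multiple})\) that lands \(m+j\) exactly on a multiple of a higher power. The inequality \(w\ge m\) is what keeps these choices within range.
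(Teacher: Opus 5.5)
There is a genuine gap. Your Step~3 is the whole content of the proposition, and it is only asserted (``should be possible only when $m$ lies just below a pure power''). As a statement about the single chain $\Phi_e\mid\binom{m+j}{m}$, $d\le j\le w$, it is in fact false.

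First, the orientation is wrong: with $n\ge 2w$ one has $m=n-w\ge w$, not $w\ge m$. Now consider two examples, both with $m\ge w$:
\begin{itemize}
\item Take $p=3$, $m=159=(12220)_3$, $d=3$, $w=80$. Since $j\le 80<3^4$, a carry-free $j$ must have zero digits in positions $1,2,3$. So only $j\le 2$ is carry-free, and $3\mid\binom{159+j}{159}$ for every $j\in[3,80]$. Yet $(159,162]$ contains no power of $3$; the obstruction is $162=2\cdot 3^4$.
\item Take $p=7$, $m=47=(65)_7$, $d=2$, $w=5$. Every $j\in[2,5]$ produces two carries, so $49$ divides the whole chain and squarefreeness cannot come out of it. This is a case $p>w$, and your Step~4 inference ``$p>w\ge m$, hence $2p>m+d$'' fails: the multiple of $7$ in $(47,49]$ is $7^2$.
\end{itemize}
Separately, for $e=1$ it is not true that $d\le e$; there $d\ge 59$, and the usable substitute is $\bar\omega>d(d-1)$.

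So digit bookkeeping on this chain alone cannot work. The missing ingredient is something that stops $m$ from sitting just below a multiple $kp^s$ with $k\ge 2$, or just below $p^{2s}$.

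Both examples violate the strict Roos bound (\cref{prop:strict-roos}), i.e.\ $m<(w-1)\frac{e+1}{e}-1$. For $e\ge 2$, combine it with $d\le e\sqrt w$ and $w>2e^2+4e+1$; for $e=1$, use the strength equation instead. Either way you get $m+d<2w$. This repairs Step~4: a prime $p>w$ dividing some $m+h_i+1$ must equal it, and $p^2$ cannot divide anything in $(m,m+d]$.

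For $p\le w$ you still need a real argument. A natural start is to let $p^{L-1}\le m<p^L$ and $\bar m=p^L-1-m$:
\begin{itemize}
\item if $\bar m<d$, then $p^L\in(m,m+d]$;
\item if $d\le \bar m\le w$, then $j=\bar m$ is carry-free.
\end{itemize}
What remains is the case $\bar m>w$, and the claim that some admissible $j$ gives at most one carry. That is exactly where $m+d<2w$ must enter. Possibly you will also need further design identities, such as $\Phi_e\mid\binom{n-i-j}{w-i}$ for $i+j\le\varphi$, which comes from the integrality of $\lambda_i^j$.

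None of this is in the proposal. The paper itself only imports the statement from the companion paper, so there is no in-text proof to match. As written, your Steps~3--4 do not establish it.
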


For each prime $p\mid\Phi_e(n,w)$, fix one localized exponent $s_p$ with $n-w<p^{s_p}\le n-w+d$, and  the \emph{reciprocal mass}
\begin{equation}\label{eq:reciprocal-mass-definition}
S=\sum_{p\mid\Phi_e(n,w)}\frac1{s_p}.
\end{equation}
Then the nonexistence of a $e$-perfect code is obtained either by finding a square in $\Phi_e(n,w)$ by \cref{cor:prime factors from chain divisitilities} or by incompatible bounds of $S$.

\begin{proposition}[Lemma~6.2  of \cite{johnson-first}]\label{lem:section5-Lloyd-mass}
If there exists an $e$-perfect code in $\textup{J}(n,w)$ with $e\geq 2$,
\begin{equation}\label{eq:section5-mass-bounds}
\frac{\log\Phi_e(n,w)}{\log M}\le S<\frac{\log\Phi_e(n,w)}{\log (n-w)}.
\end{equation}
Moreover, at most $e$ localized exponents are equal to $1$.
\end{proposition}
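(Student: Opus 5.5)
Throughout I read $M$ as $n-w+d$, the upper end of the localization window in \cref{cor:prime factors from chain divisitilities}. The plan is to get the two mass bounds by summing logarithms over the prime factors of $\Phi_e(n,w)$, and to get the statement about exponents equal to $1$ from the Lloyd factorization.

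\textbf{The mass bounds.} By \cref{cor:prime factors from chain divisitilities}, $\Phi_e(n,w)$ is squarefree, so
\[
\log\Phi_e(n,w)=\sum_{p\mid\Phi_e(n,w)}\log p .
\]
For each such $p$ the fixed localized exponent satisfies $n-w<p^{s_p}\le M$. Dividing by $s_p$ gives
\[
\frac{\log(n-w)}{s_p}<\log p\le \frac{\log M}{s_p}.
\]
Summing over all $p\mid\Phi_e(n,w)$ yields
\[
S\log(n-w)<\log\Phi_e(n,w)\le S\log M .
\]
The sum is nonempty, since $\Phi_e(n,w)\ge 2$, so the left inequality stays strict. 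We have $\log(n-w)>0$ and $\log M>0$, because $n-w\ge w$ is large by \cref{prop:lower bound of w}. Dividing through gives \eqref{eq:section5-mass-bounds}.

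\textbf{At most $e$ exponents equal $1$.} Suppose $s_p=1$, so $p$ is a prime with $n-w<p\le n-w+d$ and $p\mid\Phi_e(n,w)$. I would use the Lloyd factorization \eqref{eq:section5-Lloyd-factorization}.
\begin{itemize}
\item Since $p>n-w\ge w>e$, the prime $p$ is coprime to $(e!)^2$.
\item Since $p>w\ge w-h_i>0$, the prime $p$ divides none of the factors $w-h_i$.
\end{itemize}
Hence $p$ divides some factor $n-w+h_i+1$. Because $0\le h_i\le d-1$, this factor lies in $(n-w,\,n-w+d]$.

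Next, $e\ge 2$ lets us apply \cref{cor:str_redund}, giving $d\le e\sqrt w$. Combined with the lower bound on $w$ from \cref{prop:lower bound of w}, which gives $w>e^2$, we get $d<w\le n-w$. Therefore
\[
n-w+h_i+1\le n-w+d<2(n-w)<2p .
\]
So the only multiple of $p$ in that interval is $p$ itself, which forces $p=n-w+h_i+1$. The $h_i$ are distinct, so distinct such primes correspond to distinct indices $i\in\{1,\dots,e\}$. Hence there are at most $e$ of them.

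The only delicate step is checking $d<n-w$. It rests on combining \cref{cor:str_redund} with \cref{prop:lower bound of w}, and this is where the hypothesis $e\ge 2$ is needed.
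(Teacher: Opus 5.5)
Your argument is correct and is essentially the one behind the quoted lemma. The mass bounds come from summing $\log(n-w)<s_p\log p\le\log M$ over the squarefree factorization of $\Phi_e(n,w)$, and the count of exponents equal to $1$ comes from the Lloyd factorization plus the fact that the only positive multiple of $p$ below $2p$ is $p$ itself; the paper reuses these two devices in \cref{lem:s7-two-masses} and \cref{lem:s7-coprime}. The step you flag as delicate is easier than you make it, since $d=w-\varphi\le w\le n-w$ already gives $n-w+h_i+1\le 2(n-w)<2p$, so \cref{cor:str_redund} is not needed there.
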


\begin{proposition}[Lemma~4.3 of \cite{johnson-first}]\label{lem:residue-localization}
Let $p$ be a prime, $P=p^k$, where $k\ge 1$, $b \equiv w~(\mmod~P)$, and $c \equiv n-w~(\mmod~P)$, such that $0 \leq b,c <P$, and suppose that
$$
e\equiv -\gamma \pmod P \qquad \text{and} \qquad 1\le \gamma\le \frac{P+1}{2}.
$$
If there exists an $e$-perfect code in \textup{J}$(n,w)$, then $P- 2\gamma+1\le b,c\le P-\gamma$.
\end{proposition}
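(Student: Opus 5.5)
The last statement above is \cref{lem:residue-localization}: if $P=p^k$, $e\equiv-\gamma\pmod P$ with $1\le\gamma\le (P+1)/2$, and $b\equiv w$, $c\equiv n-w\pmod P$ with $0\le b,c<P$, then an $e$-perfect code forces $P-2\gamma+1\le b,c\le P-\gamma$. I would prove it from the embedded Steiner divisibilities of \cref{prop:embedded-steiner}, applied with $j$ chosen so that the $p$-adic valuation of $\binom{e+j}{j}$ is positive. Lucas/Kummer-type carry counting then forces $\binom{w-e+j-1}{j}$ to be divisible by $p$ as well, which constrains $w-e-1\pmod P$. By the symmetry of \eqref{eq:embedded-steiner-shift} in $w$ and $n-w$, it suffices to treat $b$.

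\textbf{Carry reformulation.} Put $m=e+j$. By Kummer's theorem, $v_p\binom{e+j}{j}$ is the number of carries in the base-$p$ addition $e+j$, and $v_p\binom{w-e-1+j}{j}$ is the number of carries in $(w-e-1)+j$. Since $e\equiv -\gamma\pmod P$, the last $k$ digits of $e$ are those of $P-\gamma$.

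\textbf{Choice of $j$.} Take $j=\gamma$; note $\gamma\le (P+1)/2\le e+1$ whenever $e\ge P-\gamma$, and the small-$e$ cases would need a separate check. Then $e+\gamma\equiv 0\pmod P$, so adding $\gamma$ to $e$ produces at least $k$ carries in the lowest $k$ digits. Hence $p^k\mid\binom{e+\gamma}{\gamma}$, and therefore $p^k$ divides $\binom{w-e-1+\gamma}{\gamma}$.

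\textbf{Translating to a residue condition.} Now $w-e-1+\gamma\equiv b+2\gamma-1\pmod P$. Since $\gamma<P$, the carries in $(w-e-1)+\gamma$ split into carries within the low $k$ digits and carries propagating beyond them. Achieving $k$ carries in total requires that the low-digit sum $(b+\gamma-1 \bmod P)+\gamma$ reach at least $P$, with carries at every digit position. The residue $r\equiv w-e-1\equiv b+\gamma-1\pmod P$ must then satisfy $r+\gamma\ge P$, which gives the lower bound $b\ge P-2\gamma+1$ once reduction mod $P$ is handled.

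\textbf{Upper bound.} For $b\le P-\gamma$ I would use a second value $j'$ chosen so that $e+j'$ has no carry but forces something in the other binomial. Alternatively, and more likely what is intended, I would combine with $\binom{e+j}{j}$ for $j=e+1$, or use \eqref{eq-congruence-e+2}-style reasoning.

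\textbf{Main obstacle.} Turning ``$k$ carries'' into an exact interval for $b$ is the step I expect to be hardest. Carries can come from higher digits rather than from the low block, so I would need to argue digit by digit that each of the $k$ lowest positions must carry. I would also have to handle the wraparound when $b+\gamma-1\ge P$.
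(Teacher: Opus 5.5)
There is a genuine gap, though it is repairable. The paper does not reprove this statement; it quotes it from \cite{johnson-first}. So I am judging your argument on its own merits. Your mechanism is the right one: take $j=\gamma$ in \eqref{eq:embedded-steiner-shift}, apply Kummer's theorem, and read off an overflow condition modulo $P$. The problem is the claim ``$p^k\mid\binom{e+\gamma}{\gamma}$'', which is false whenever $p\mid\gamma$.

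\textbf{The carry count on the $e$ side.} Put $t=v_p(\gamma)$. The low $k$ digits of $e$ are those of $P-\gamma$. Adding $\gamma$ to $e$ gives no carry in positions below $t$, since both digits are $0$ there. It gives a carry at each of the positions $t,\dots,k-1$. So only $v_p\binom{e+\gamma}{\gamma}\ge k-t$ is guaranteed. The case $e=12$, $P=16$, $\gamma=4$, used in Section~3, is a counterexample: $\binom{16}{4}=1820$ has $2$-adic valuation $2$, not $4$. Your overflow step rests on ``at least $k$ carries'', so it fails as written when $p\mid\gamma$.

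\textbf{The repair.} Count carries in $(w-e-1)+\gamma$ the same way.
\begin{itemize}
\item Positions below $t$ never carry.
\item Because $\gamma<P$, a carry at a position $\ge k$ can only propagate from a carry out of position $k-1$.
\item Hence if $\bigl((w-e-1)\bmod P\bigr)+\gamma<P$, carries occur only at positions $t,\dots,k-2$. That is at most $k-1-t$ carries, contradicting $v_p\binom{w-e-1+\gamma}{\gamma}\ge k-t$.
\end{itemize}
This also removes your ``main obstacle''. No digit-by-digit argument is needed, and it is not true that every low position must carry.

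\textbf{The upper bound.} You leave $b\le P-\gamma$ open and suggest unspecified alternatives: a second $j'$, $j=e+1$, or the congruence modulo $e+2$. None is needed. The single overflow condition $r+\gamma\ge P$, with $r=(b+\gamma-1)\bmod P$, already gives both bounds.
\begin{itemize}
\item If $b\le P-\gamma$, then $r=b+\gamma-1$, and $r\ge P-\gamma$ is exactly $b\ge P-2\gamma+1$.
\item If $b\ge P-\gamma+1$, then $r=b+\gamma-1-P\le\gamma-2<P-\gamma$, because $2\gamma\le P+1$. So overflow fails.
\end{itemize}
The wraparound case you wanted to ``handle'' is precisely what excludes $b>P-\gamma$.

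\textbf{Minor points.} There are no small-$e$ cases. Since $e\equiv P-\gamma$ with $1\le P-\gamma\le P-1$, we get $e\ge P-\gamma\ge\gamma-1$, so $j=\gamma$ is always admissible. This, together with the wraparound step, is where the hypothesis $2\gamma\le P+1$ is used. The bound for $c$ follows verbatim from the second divisibility.
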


\begin{proposition}[Theorem~4.2 of \cite{johnson-first}]\label{thm:modulo-p-criterion}
Let \(p\) be a prime. Suppose that, for some \(k\ge1\), \(P=p^k\), and \(1\le\gamma<P\),
$$
e\equiv-\gamma~(\mmod~P) \qquad \text{and}  \qquad 1 \leq \gamma\le\left\lfloor\frac{P+2}{4}\right\rfloor.
$$
If a $e$-perfect code exists in \textup{J}$(n,w)$, then $p$ divides $\Phi_e(n,w)$.
\end{proposition}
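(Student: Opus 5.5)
The plan is to prove the criterion directly from the definition $\Phi_e(n,w)=\sum_{i=0}^{e}\binom{w}{i}\binom{m}{i}$, with $m=n-w$, rather than through the Lloyd factorization, using \cref{lem:residue-localization} as the only input from the code. Since $\gamma\le\lfloor (P+2)/4\rfloor\le (P+1)/2$, \cref{lem:residue-localization} applies. Writing $b\equiv w$ and $c\equiv m \pmod P$ with $0\le b,c<P$, it gives
\[
P-2\gamma+1\le b,c\le P-\gamma .
\]
We also have $e=qP+(P-\gamma)$ for some integer $q\ge 0$. So the lowest $k$ base-$p$ "blocks" of $w$, $m$ and $e$ are tightly constrained: $b$ and $c$ sit just below the residue $P-\gamma$ of $e$, in a window of width $\gamma$.

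\textbf{Step 1.} Split the summation index as $i=jP+r$ with $0\le r<P$. Then reduce each product $\binom{w}{jP+r}\binom{m}{jP+r}$ modulo $p$. The tool is the prime-power form of Lucas's theorem,
\[
\binom{w}{jP+r}\equiv\binom{\lfloor w/P\rfloor}{j}\binom{b}{r}\pmod p,
\]
which holds because Lucas's theorem factors over all base-$p$ digits, and grouping the lowest $k$ digits gives exactly this. Note that $\binom{b}{r}=0$ for $r>b$.

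\textbf{Step 2.} The range of $r$ depends on $j$:
\begin{itemize}
\item For $j<q$, $r$ runs over all of $[0,P-1]$.
\item For $j=q$, $r$ runs over $[0,P-\gamma]$, and $P-\gamma\ge\max(b,c)$.
\end{itemize}
So in every block the truncation is invisible, and
\[
\Phi_e(n,w)\equiv \Bigl(\sum_{j=0}^{q}\binom{W}{j}\binom{M}{j}\Bigr)\Bigl(\sum_{r=0}^{\min(b,c)}\binom{b}{r}\binom{c}{r}\Bigr)\pmod p,
\]
where $W=\lfloor w/P\rfloor$ and $M=\lfloor m/P\rfloor$. By Vandermonde, the second factor is $\binom{b+c}{b}$.

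\textbf{Step 3.} Show $p\mid\binom{b+c}{b}$. Since $b,c\ge P-2\gamma+1$, we get
\[
b+c\ge 2P-4\gamma+2\ge P,
\]
where the last inequality is exactly the hypothesis $\gamma\le\lfloor (P+2)/4\rfloor$; this is where that bound is used. Hence adding $b$ and $c$ in base $p$ produces a carry out of the $k$-th digit position. By Kummer's theorem, $p\mid\binom{b+c}{b}$, and therefore $p\mid\Phi_e(n,w)$.

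\textbf{Main obstacle.} The delicate point is Step 1. I must check that the Lucas splitting at the $P$-block is legitimate modulo $p$, not modulo $P$, and that the upper limit $e$ cuts the double sum into a clean product. The latter relies on $P-\gamma\ge\max(b,c)$, which is the upper half of \cref{lem:residue-localization}; I would verify this boundary case $j=q$ carefully. If the product structure broke for $j=q$, the fallback would be to use the lower bound on $b,c$ once more. That bound shows the terms with $r>\min(b,c)$ vanish, so the $j=q$ block still equals the full Vandermonde sum.
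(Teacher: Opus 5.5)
Your proof is correct. The paper itself gives no proof of this statement; it imports it as Theorem~4.2 of the companion paper, so there is no in-text argument to compare against. Your route uses residue localization, then Lucas's theorem in block form, then Vandermonde, then Kummer. It is complete relative to the quoted \cref{lem:residue-localization}, and it uses exactly the Lucas/Kummer tools the paper collects in Section~2. The key points all check:
\begin{itemize}
\item for $j<q$ the block of $r$-values is full;
\item for $j=q$ the block is cut at $r=P-\gamma\ge\max(b,c)$, so every block contributes exactly $\binom{b+c}{b}$;
\item $b+c\ge 2P-4\gamma+2\ge P$ with $b,c<P$ forces a carry, so $p\mid\binom{b+c}{b}$.
\end{itemize}

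Your fallback remark in the last paragraph is confused and should be dropped. The vanishing of $\binom{b}{r}\binom{c}{r}$ for $r>\min(b,c)$ is automatic and needs no bound. What makes the truncation at $i\le e$ harmless is the \emph{upper} bound $\min(b,c)\le P-\gamma$, not the lower bound. Without that upper bound, the $j=q$ block would be a genuinely truncated sum with no reason to vanish modulo $p$. Since you do have it, the fallback is simply unnecessary.

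You might worry about circularity, since \cref{lem:residue-localization} comes from the same source as the criterion. You can avoid depending on it. It follows directly from \eqref{eq:embedded-steiner-shift} with $j=\gamma$, which is admissible because $\gamma\le e+1$:
\begin{itemize}
\item The low block of $e$ is $P-\gamma$, so $v_p\binom{e+\gamma}{\gamma}\ge k-v_p(\gamma)$.
\item If $(w-e-1)\bmod P<P-\gamma$, then adding $\gamma$ to $w-e-1$ produces at most $k-1-v_p(\gamma)$ carries, which contradicts the divisibility.
\item Hence $(b+\gamma-1)\bmod P\ge P-\gamma$. Together with $\gamma\le(P+1)/2$, this is exactly $P-2\gamma+1\le b\le P-\gamma$. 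The same argument applied to $n-w$ gives the bounds on $c$.
\end{itemize}

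Finally, rename your $M=\lfloor m/P\rfloor$, since it collides with the paper's $M=\bar\omega+d$.
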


\begin{proposition}[Corollary~4.4 of \cite{johnson-first}]\label{cor:moving-prime-detector}
If \(e<p\le(4e+2)/3\) is prime and a nontrivial \(e\)-perfect code exists, then \(p\mid\Phi_e(n,w)\).
\end{proposition}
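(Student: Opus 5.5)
This is a direct specialisation of the modulo-$p$ criterion, \cref{thm:modulo-p-criterion}, to the case $k=1$ ($P=p$). There is nothing new to prove; the only work is to choose $\gamma$ and check the numerical hypotheses of that criterion.

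Let $p$ be a prime with $e<p\le(4e+2)/3$, and put $P=p$. Since $0\le e<p$, the residue of $e$ modulo $p$ is $e$ itself. So the unique $\gamma$ with $1\le \gamma<P$ and $e\equiv-\gamma\pmod P$ is
\[
\gamma=p-e.
\]
Because $p>e$, we have $\gamma\ge1$. Also $\gamma=p-e\le p-1<P$, since $e\ge1$ in the nontrivial range.

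It remains to check $\gamma\le\lfloor (P+2)/4\rfloor$. Since $\gamma$ is an integer, it suffices to show $\gamma\le (p+2)/4$. This is equivalent to $4p-4e\le p+2$, that is, $3p\le 4e+2$, which is exactly the assumption $p\le(4e+2)/3$.

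So all hypotheses of \cref{thm:modulo-p-criterion} hold with $k=1$ and $\gamma=p-e$. If a nontrivial $e$-perfect code exists in $J(n,w)$, that proposition yields $p\mid\Phi_e(n,w)$, as claimed.

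The only step that needs any care is the passage from $\gamma\le(p+2)/4$ to the floor bound $\gamma\le\lfloor(P+2)/4\rfloor$, and this is immediate from the integrality of $\gamma$.
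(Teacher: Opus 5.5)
Your proof is correct and matches the intended derivation: the statement is the $k=1$ case of \cref{thm:modulo-p-criterion} with $P=p$ and $\gamma=p-e$. There the condition $3p\le 4e+2$ is exactly $\gamma\le (p+2)/4$, and the integrality of $\gamma$ then gives the floor bound.
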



\subsection{Some useful tools in number theory}
%
Two old theorems in number theory will be used, Kummer's theorem~\cite[p. 245]{GKP94,Gra97} and Lucas' theorem~\cite{Fin47,Luc91}.

\begin{theorem} [Lucas' theorem]
\label{thm:Lucas}
Let $p$ be a prime and let
$$
N = \prod_{j \geq 0} N_j p^j,  ~~~ K = \prod_{j \geq 0} K_j p^j, ~~~ 0 \leq N_j , K_j <p ,
$$
then
$$
\binom{N}{K} \equiv \prod_{j \geq 0} \binom{N_j}{K_j} ~ (\mmod ~ p).
$$
\end{theorem}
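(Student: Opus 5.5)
We read the displayed expansions as base-$p$ digit expansions, $N=\sum_{j\ge0}N_jp^j$ and $K=\sum_{j\ge0}K_jp^j$ with $0\le N_j,K_j<p$. This is clearly the intended meaning; the products in the statement are a typographical slip. We use the standard convention $\binom{a}{b}=0$ when $b>a$. The plan is the classical generating-function argument in the polynomial ring $\mathbb{F}_p[x]$: compute the coefficient of $x^K$ in $(1+x)^N$ in two ways.

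\emph{Step 1 (Frobenius).} We show $(1+x)^p\equiv 1+x^p \pmod p$ in $\mathbb{Z}[x]$. For $0<i<p$, the prime $p$ divides the numerator of $\binom{p}{i}=p!/(i!(p-i)!)$ but not the denominator, so $p\mid\binom{p}{i}$. Induction on $j$ then gives $(1+x)^{p^j}\equiv 1+x^{p^j}\pmod p$ for every $j\ge0$.

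\emph{Step 2 (factorisation).} Using the base-$p$ expansion of $N$,
\[
(1+x)^N=\prod_{j\ge0}\bigl((1+x)^{p^j}\bigr)^{N_j}\equiv\prod_{j\ge0}\bigl(1+x^{p^j}\bigr)^{N_j}
\equiv\prod_{j\ge0}\sum_{k_j=0}^{N_j}\binom{N_j}{k_j}x^{k_jp^j}\pmod p .
\]

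\emph{Step 3 (comparing coefficients).} On the left, the coefficient of $x^K$ is $\binom{N}{K}$. On the right, the coefficient of $x^K$ is the sum of $\prod_j\binom{N_j}{k_j}$ over all tuples $(k_j)$ with $0\le k_j\le N_j<p$ and $\sum_j k_jp^j=K$. Since every $k_j$ is a legitimate base-$p$ digit, uniqueness of base-$p$ representation forces $k_j=K_j$ for all $j$. Hence there is at most one contributing tuple.
\begin{itemize}
\item If $K_j\le N_j$ for all $j$, the unique tuple is $k_j=K_j$, and the coefficient is $\prod_j\binom{N_j}{K_j}$.
\item If $K_j>N_j$ for some $j$, no tuple contributes, so the coefficient is $0$. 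This agrees with $\prod_j\binom{N_j}{K_j}$, which vanishes because of the factor $\binom{N_j}{K_j}=0$.
\end{itemize}
In both cases $\binom{N}{K}\equiv\prod_{j\ge0}\binom{N_j}{K_j}\pmod p$.

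The only point needing care is Step 3. There, uniqueness of base-$p$ representation must be applied precisely because each $k_j$ is bounded by $N_j\le p-1$. The case $K>N$ is also covered: then $\binom{N}{K}=0$, and some highest differing digit satisfies $K_j>N_j$, so the right-hand side is $0$ as well. Everything else is routine.
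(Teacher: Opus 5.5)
Your proof is correct and complete. It is the standard generating-function argument: the Frobenius congruence $(1+x)^{p^j}\equiv 1+x^{p^j}\pmod p$, the factorisation of $(1+x)^N$ along the base-$p$ digits of $N$, and uniqueness of base-$p$ representation to isolate the coefficient of $x^K$. You also handle the cases $K_j>N_j$ and $K>N$ correctly.

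The paper gives no proof of this classical result; it only cites Lucas and Fine, so there is nothing to compare against. You are also right that the displayed $\prod_{j\ge0}N_jp^j$ and $\prod_{j\ge0}K_jp^j$ are typos for $\sum_{j\ge0}N_jp^j$ and $\sum_{j\ge0}K_jp^j$, both here and in the corollary that follows.
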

\begin{corollary}
\label{cor:Lucas}
If $p$ is a prime and
$$
N = \prod_{j \geq 0} N_j p^j,  ~~~ K = \prod_{j \geq 0} K_j p^j, ~~~ 0 \leq N_j , K_j <p ,
$$
then $p$ does not divide $\binom{N}{K}$ if and only if $K_j \leq N_j$ for every $j$.
\end{corollary}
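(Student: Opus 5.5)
We derive the corollary directly from Lucas' theorem (\cref{thm:Lucas}), using the base-$p$ digits $N=\sum_j N_jp^j$ and $K=\sum_j K_jp^j$. (The statement writes these expansions with $\prod$; the intended meaning is clearly the digit sum.) The theorem gives
\[
\binom{N}{K}\equiv\prod_{j\ge0}\binom{N_j}{K_j}\pmod p ,
\]
where only finitely many factors differ from $\binom{0}{0}=1$. Hence $p\nmid\binom{N}{K}$ holds if and only if the right-hand product is nonzero modulo $p$. Since $p$ is prime, $\mathbb{Z}/p\mathbb{Z}$ is a field, so this happens exactly when every factor $\binom{N_j}{K_j}$ is nonzero modulo $p$.

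The proof then reduces to a single-digit claim: for integers $0\le a,b<p$, $\binom{a}{b}\not\equiv0\pmod p$ if and only if $b\le a$. If $b>a$, then $\binom{a}{b}=0$ by the convention for binomial coefficients, so it is divisible by $p$. If $b\le a$, then
\[
\binom{a}{b}=\frac{a!}{b!\,(a-b)!}
\]
is a positive integer. Every prime factor of the numerator $a!$ is at most $a<p$, so $p\nmid a!$, and therefore $p\nmid\binom{a}{b}$.

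Applying this claim to each pair $(N_j,K_j)$ shows that $p\nmid\binom{N}{K}$ if and only if $K_j\le N_j$ for every $j$. No step is a genuine obstacle. The only points needing care are reading the stated expansions as base-$p$ digit sums and using the convention $\binom{a}{b}=0$ for $b>a$, which covers the case in which some digit of $K$ exceeds the corresponding digit of $N$.
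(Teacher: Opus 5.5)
Your proof is correct, and it is exactly the immediate derivation from Lucas' theorem that the paper relies on: the paper states this corollary without proof, citing it as a standard consequence. You are also right that the $\prod$ in the displayed digit expansions should read $\sum$.
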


\begin{theorem} [Kummer's theorem]
\label{thm:Kummer}
Let $p$ be a prime. The number of times $p$ appears in the factorization of $\binom{a}{b}$ equals the number of carries when adding
$b$ to $a-b$ in base $p$.
\end{theorem}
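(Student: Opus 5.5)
Since Kummer's theorem is a classical result, I would prove it directly from Legendre's formula for the $p$-adic valuation of a factorial. I assume $0\le b\le a$ throughout. Write $v_p$ for the exponent of $p$ and put $c=a-b$. First I would establish Legendre's formula
\[
v_p(m!)=\sum_{i\ge1}\left\lfloor \frac{m}{p^i}\right\rfloor .
\]
This holds because $\lfloor m/p^i\rfloor$ counts the integers in $\{1,\dots,m\}$ divisible by $p^i$. Hence each $k\le m$ is counted exactly $v_p(k)$ times. Applying this to $\binom{a}{b}=a!/(b!\,c!)$ gives
\[
v_p\binom{a}{b}=\sum_{i\ge1}\left(\left\lfloor \frac{b+c}{p^i}\right\rfloor-\left\lfloor \frac{b}{p^i}\right\rfloor-\left\lfloor \frac{c}{p^i}\right\rfloor\right).
\]
This is a finite sum, since the terms vanish once $p^i>a$.

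Next I would analyse a single summand. Write $b=q_bp^i+r_b$ and $c=q_cp^i+r_c$ with $0\le r_b,r_c<p^i$. Then the $i$-th summand equals $\lfloor (r_b+r_c)/p^i\rfloor$. Because $r_b+r_c<2p^i$, this is $1$ if $r_b+r_c\ge p^i$ and $0$ otherwise. So the valuation counts the indices $i\ge1$ for which
\[
(b \bmod p^i)+(c\bmod p^i)\ge p^i .
\]

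Finally I would identify this condition with carries. Let $b_j,c_j$ be the base-$p$ digits of $b,c$. In the schoolbook addition of $b$ and $c$, define the carries recursively by $\epsilon_0=0$ and $\epsilon_{j+1}=1$ if $b_j+c_j+\epsilon_j\ge p$, and $\epsilon_{j+1}=0$ otherwise. By induction on $i$ I would show
\[
(b\bmod p^i)+(c\bmod p^i)=\bigl((b+c)\bmod p^i\bigr)+\epsilon_i p^i ,
\]
which is the usual correctness statement for digit-wise addition. The inductive step adds $(b_i+c_i)p^i$ to both sides and splits off the new carry $\epsilon_{i+1}p^{i+1}$.

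It follows that the $i$-th summand equals $\epsilon_i$, the carry out of digit position $i-1$. Summing over $i$ gives exactly the total number of carries. The only step that needs care is this bookkeeping identity, specifically keeping the indexing of the carries aligned with the powers $p^i$. The rest is routine.
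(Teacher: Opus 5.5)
Your proof is correct. It is the standard derivation of Kummer's theorem from Legendre's formula. The per-level identity $(b \bmod p^i)+(c\bmod p^i)=\bigl((b+c)\bmod p^i\bigr)+\epsilon_i p^i$ is exactly what turns the $i$-th summand into the carry $\epsilon_i$. In the inductive step, just add that $\bigl((b+c)\bmod p^i\bigr)+(b_i+c_i+\epsilon_i-p\,\epsilon_{i+1})p^i$ lies in $[0,p^{i+1})$ and is congruent to $b+c$, so it equals $(b+c)\bmod p^{i+1}$.

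The paper gives no proof of this statement. It quotes Kummer's theorem as a classical tool and cites the literature, so there is no in-paper argument to compare against.

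A common alternative packages Legendre's formula as $v_p(m!)=(m-\sigma_p(m))/(p-1)$, where $\sigma_p$ is the base-$p$ digit sum. One then observes that $\sigma_p(b)+\sigma_p(c)-\sigma_p(b+c)$ equals $p-1$ times the number of carries. Your level-by-level count is equivalent, and it has the mild advantage of showing which power $p^i$ each carry accounts for.
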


We next record the effective two-logarithm estimate used to cap the localized exponents.
For a prime $p$, put $\ell(p)=\max\{1,\log p\}$.

\begin{lemma}[Laurent's two-prime estimate,  Lemma~6.5 of \cite{johnson-first}]
\label{lem:section5-Laurent}
Let $p\ne q$ be primes and $u,v\ge1$ be integers. Let
\[\Lambda=u\log p-v\log q,\qquad B=\frac{u}{\ell(q)}+\frac{v}{\ell(p)},\qquad H=\max\{\log B+0.38,10\}.\]
Then $\Lambda\ne0$ and
\begin{equation}\label{eq:section5-Laurent}
\log|\Lambda|\ge-25.2H^2\ell(p)\ell(q).
\end{equation}
\end{lemma}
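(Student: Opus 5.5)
The plan is to treat the lemma as a direct specialization of Laurent's effective lower bound for linear forms in two logarithms, rather than proving any new transcendence estimate. There are two parts: the non-vanishing $\Lambda\ne0$, which is elementary, and the quantitative bound \eqref{eq:section5-Laurent}, which comes from substituting into Laurent's corollary. I recall that corollary as Corollary~2 in Laurent's paper on linear forms in two logarithms and interpolation determinants~II; I am citing it from memory.

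\emph{Non-vanishing.} If $\Lambda=0$, then $u\log p=v\log q$, so $p^u=q^v$. Since $u,v\ge1$ and $p\ne q$ are primes, this contradicts unique factorization. Hence $\Lambda\ne0$.

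\emph{Recalling Laurent's corollary.} Let $\alpha_1,\alpha_2$ be multiplicatively independent positive real algebraic numbers, and let $D=[\mathbb{Q}(\alpha_1,\alpha_2):\mathbb{Q}]$. For positive integers $b_1,b_2$, put $\Lambda=b_2\log\alpha_2-b_1\log\alpha_1$. Choose real numbers $A_i>1$ with
\[
\log A_i\ge\max\Bigl\{h(\alpha_i),\ \frac{|\log\alpha_i|}{D},\ \frac1D\Bigr\}.
\]
Set
\[
b'=\frac{b_1}{D\log A_2}+\frac{b_2}{D\log A_1}.
\]
Then the corollary states
\[
\log|\Lambda|\ge-25.2\,D^4\bigl(\max\{\log b'+0.38,\ 10/D,\ 1\}\bigr)^2\log A_1\log A_2 .
\]

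\emph{Specializing the parameters.}
\begin{itemize}
\item Take $\alpha_1=q$, $\alpha_2=p$, $b_1=v$ and $b_2=u$. Then $\Lambda$ coincides with the form in the lemma, up to an irrelevant overall sign that does not affect $|\Lambda|$.
\item These rational integers give $D=1$.
\item Multiplicative independence of $p$ and $q$ is exactly the non-vanishing argument above.
\item The absolute logarithmic height of a rational integer $m\ge2$ is $h(m)=\log m$. The admissible choice is therefore $\log A_1=\max\{\log q,1\}=\ell(q)$ and $\log A_2=\ell(p)$, which requires $A_i>1$; this holds since $A_i\ge e$.
\end{itemize}

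\emph{Substituting.} With these choices, $b'=u/\ell(p)+v/\ell(q)$. Because $u$ and $v$ play symmetric roles, this is the same as the paper's $B=u/\ell(q)+v/\ell(p)$ once the labels of $p$ and $q$ are matched to those of $u$ and $v$. The maximum becomes $\max\{\log B+0.38,10,1\}=\max\{\log B+0.38,10\}=H$. This yields
\[
\log|\Lambda|\ge-25.2H^2\ell(p)\ell(q).
\]

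The only delicate point is bookkeeping, not mathematics. I must check that the pairing of $b_1,b_2$ with $\alpha_1,\alpha_2$ in Laurent's definition of $b'$ matches the paper's $B$, dividing $u$ by the height attached to the other prime. I must also check that the normalization of $\log A_i$, specifically the $1/D$ floor that becomes $1$ here, is exactly what produces $\ell(\cdot)=\max\{1,\log\cdot\}$. I would verify these against the precise statement of Laurent's Corollary~2. If the labels were reversed, I would swap the roles of $p$ and $q$, which is legitimate because the lemma is symmetric under exchanging $(p,u)$ with $(q,v)$.
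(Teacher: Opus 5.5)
Your route is the correct one: non-vanishing from unique factorization, then Laurent's Corollary~2 with $D=1$ and $\log A_i=\ell(\cdot)$. The paper gives no proof of its own and simply imports the statement as Laurent's estimate via the companion paper, so there is nothing different to compare.

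One slip needs fixing, because as written the final step does not go through. With your own assignment $\alpha_1=q$, $\alpha_2=p$, $b_1=v$, $b_2=u$, Laurent's $b'=b_1/\log A_2+b_2/\log A_1$ equals $v/\ell(p)+u/\ell(q)$, which is already exactly $B$. Likewise $\Lambda=u\log p-v\log q$ matches with no sign change. The expression $u/\ell(p)+v/\ell(q)$ you wrote instead differs from $B$ by $(u-v)\bigl(1/\ell(p)-1/\ell(q)\bigr)$, which is generally nonzero. Your symmetry remark cannot repair this, since the swap $(p,u)\leftrightarrow(q,v)$ maps each of the two expressions to itself. Simply recompute $b'$ correctly; no relabelling is needed.
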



%

\begin{proposition}[Theorem~5 of \cite{rosser1962approximate}]\label{prop:quoted-RS-reciprocal}
Let $B\approx0.261497212847643$.
Then for any integer $x\geq 286$,
\begin{equation}\label{eq:RS-317}
\sum_{\substack{p\le x\\p\text{ prime}}}\frac1p
<\log\log x+B+\frac{1}{2\log^2x}.
\end{equation}
\end{proposition}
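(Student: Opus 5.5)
We treat \cref{prop:quoted-RS-reciprocal} as an explicit form of Mertens' second theorem and derive it from explicit bounds on Chebyshev's function $\theta(t)=\sum_{p\le t}\log p$, by partial summation. Write $\theta(t)=t+R(t)$. Stieltjes integration of $\sum_{p\le x}1/p=\int_{2^-}^x \frac{d\theta(t)}{t\log t}$ gives
\[
\sum_{p\le x}\frac1p=\frac{\theta(x)}{x\log x}+\int_2^x\theta(t)\,\frac{1+\log t}{t^2\log^2 t}\,dt .
\]
Substituting $\theta(t)=t+R(t)$, the main term $t$ integrates exactly: $\frac{1}{\log x}+\log\log x-\log\log2-\frac1{\log 2}$. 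We then extend the $R$-integral to $\infty$, which is legitimate since $R(t)=O(t/\log^2 t)$. This yields
\[
\sum_{p\le x}\frac1p=\log\log x+B+\frac{R(x)}{x\log x}-\int_x^\infty R(t)\,\frac{1+\log t}{t^2\log^2 t}\,dt .
\]
Here $B$ is identified as $1-\log\log 2+\int_2^\infty R(t)\frac{1+\log t}{t^2\log^2t}\,dt$. This is the usual Mertens constant, whose numerical value $0.2614972128\ldots$ we take as known (it is also computed independently from $\gamma$ and $\sum_p(\log(1-1/p)+1/p)$).

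It then suffices to show that the error terms $\frac{R(x)}{x\log x}-\int_x^\infty\cdots$ are below $\frac{1}{2\log^2x}$ for $x\ge286$. For this we insert explicit two-sided Chebyshev bounds of Rosser--Schoenfeld type,
\[
|R(t)|<\frac{t}{2\log t}\qquad(t\ge 563),
\]
together with the one-sided bound $\theta(t)<t\bigl(1+\frac{1}{2\log t}\bigr)$ valid for all $t>1$. With $|R(t)|\le t/(2\log t)$, the boundary term contributes at most $\frac1{2\log^2x}$. The tail integral contributes at most
\[
\int_x^\infty\frac{1+\log t}{2t\log^3 t}\,dt=\frac{1}{2\log x}\Bigl(1+\frac{1}{2\log x}\Bigr)\cdot\frac{1}{\log x}\cdot(\ldots).
\]
This crude form is too lossy. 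The key point is that the boundary term and the tail integral have opposite signs when $R$ has a fixed sign locally. So we instead bound the combination directly, using that the upper bound on $\theta$ enters the boundary term while the lower bound enters the tail. Concretely, we split $R=R^+-R^-$ and use the sharper asymmetric bounds $-t/(2\log t)<R(t)<t/(2\log t)$, with the lower one improved to $-t/\log^2 t$ on large ranges. Some of the tail is then absorbed and the total stays below $1/(2\log^2x)$.

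The finite range $286\le x<563$, and any range where the sharper $\theta$ bounds are not yet available, is settled by direct computation. Both sides are monotone between consecutive primes: the left side is constant and the right side increases. So it suffices to check the inequality at each prime $p$ in the range, just below the next prime, which is a finite table. The threshold $286$ is exactly where this check first succeeds.

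The main obstacle is the explicit bound on $R(t)$ itself. It requires an explicit zero-free region for $\zeta(s)$ and a numerical verification of the Riemann hypothesis up to some height, fed into the explicit formula for $\psi(t)$, plus the conversion $\psi\to\theta$. That is the heavy analytic input of \cite{rosser1962approximate}; we would quote it rather than reprove it.
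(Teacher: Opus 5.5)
The paper gives no proof of this statement; it quotes Theorem~5 of Rosser--Schoenfeld. Your partial-summation identity
\[
\sum_{p\le x}\frac1p-\log\log x-B=\frac{R(x)}{x\log x}-\int_x^\infty R(t)\,\frac{1+\log t}{t^2\log^2 t}\,dt
\]
is the right starting point. However, the error estimate does not close with the inputs you name.

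An upper bound needs two things: an upper bound for $R(x)$ in the boundary term, and a lower bound for $R(t)$ on all of $[x,\infty)$ in the tail. These are independent constraints. Since $\theta(t)-t$ changes sign infinitely often (Littlewood), the sign of $R$ near $x$ says nothing about the tail, so the ``opposite signs'' cancellation is not an argument. With $R(x)<x/(2\log x)$, the boundary term is controlled only by $\frac{1}{2\log^2x}$, which is already the entire allowance. With $R(t)>-t/\log^2t$, the tail adds up to
\[
\int_x^\infty\frac{1+\log t}{t\log^4 t}\,dt=\frac{1}{2\log^2x}+\frac{1}{3\log^3x}.
\]
So your bounds give an error of about $1/\log^2x$, never below $1/(2\log^2x)$, at any $x$. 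In particular, the range you leave to direct computation would not be finite.

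What is actually required is a two-sided bound $|\theta(t)-t|\le\eta\,t/\log^2t$ for $t\ge x_0$, with $\eta$ small. The error is then at most $\eta\bigl(\frac{1}{2\log^2x}+\frac{4}{3\log^3x}\bigr)$, which suffices once $\eta\bigl(1+\frac{8}{3\log x}\bigr)<1$. Bounds of this strength come from the Rosser--Schoenfeld explicit-formula estimates; for large $t$ their relative error decays like $\exp(-c\sqrt{\log t})$. The whole range $286\le x<x_0$, which extends far beyond $563$, must then be verified numerically.

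In that verification, the critical points are $x=286$ and each prime $p\ge293$ itself. On $[p,p')$ the left side is constant and the right side increases, so the margin is smallest at $x=p$, not just below the next prime.

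A smaller slip: the main term (boundary plus integral) equals $\log\log x+\frac{1}{\log 2}-\log\log 2$, with the $1/\log x$ terms cancelling. So your expression for $B$ should begin with $\frac{1}{\log 2}-\log\log 2$. The constant $1-\log\log 2$ belongs to the version of the identity built on $\sum_{p\le t}\frac{\log p}{p}-\log t$.
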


Consequently, \cref{prop:quoted-RS-reciprocal} implies for $x\geq 286$
\begin{equation}\label{eq:section5-Dusart-simple}
\sum_{\substack{p\le x\\p\text{ prime}}}\frac1p<\log\log x+0.278\qquad(x\ge286),
\end{equation}
since the term $B+\frac{1}{2\log^2x}$ decreases with $x$ and total less than $0.278$ at $x=286$.


Rosser and Schoenfeld \cite{rosser1962approximate} defined $\psi(x)$ as the logarithm of the least common multiple of all positive integers not exceeding $x$ (and put $\psi(x)=0$ for $x<2$). For an integer $N$, this definition gives
$\psi(N)=\log\operatorname{lcm}(1,\ldots,N)$. We quote exactly the estimate needed below.

\begin{proposition}[Theorem~12 of \cite{rosser1962approximate}]\label{prop:s7-RS-psi}
The quotient $\psi(x)/x$ takes its maximum at $x=113$, and
\begin{equation}\tag{3.35}\label{eq:RS-335}
\psi(x)<1.03883x\qquad(0<x).
\end{equation}
\end{proposition}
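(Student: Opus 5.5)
Since this is the classical estimate of Rosser and Schoenfeld, my plan is to reconstruct their argument in two regimes: a finite computation for small \(x\), and an analytic error bound for large \(x\).

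The first observation is that \(\psi\) is a step function. It is constant on each interval between consecutive prime powers and jumps by \(\log p\) at \(x=p^k\). Hence on each interval \([q,q')\) between consecutive prime powers \(q<q'\), the ratio \(\psi(x)/x\) is decreasing. On \(0<x<2\) it is zero. So the supremum of \(\psi(x)/x\) over \(x\le X_0\) is attained at a prime power \(q\le X_0\), and it suffices to evaluate \(\psi(q)/q=\log\operatorname{lcm}(1,\ldots,q)/q\) at all prime powers up to a cutoff \(X_0\). This is a finite, exact computation. Summing \(\log p\) over prime powers, I expect it to show that the maximum over this range occurs at \(q=113\). There \(\psi(113)\approx 117.39\), so \(\psi(113)/113\approx1.03882\ldots<1.03883\). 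I would also check that every other prime power up to \(X_0\) gives a strictly smaller ratio, with enough margin to absorb rounding.

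For \(x>X_0\) I would use the explicit formula
\[
\psi(x)=x-\sum_\rho\frac{x^\rho}{\rho}-\log 2\pi-\tfrac12\log\bigl(1-x^{-2}\bigr),
\]
in the smoothed form Rosser and Schoenfeld use, integrating \(\psi\) against a short kernel so that the sum over zeros converges absolutely. The zeros with \(|\gamma|\le T\) are bounded using the numerically verified fact that they lie on \(\operatorname{Re}\rho=\tfrac12\). This contributes \(O(x^{1/2}\sum_{|\gamma|\le T}1/|\rho|)\), which is tiny relative to \(x\) once \(x\) is large. The zeros with \(|\gamma|>T\) are handled with an explicit zero-free region \(\sigma\ge1-1/(R\log|\gamma|)\), together with an explicit bound for the zero-counting function \(N(T)\). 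Combined, these give \(|\psi(x)-x|<\varepsilon x\) with some explicit \(\varepsilon\) that decreases as \(x\) grows.

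The main obstacle is matching the two regimes. The analytic bound must give \(\varepsilon<0.03883\) already at \(x=X_0\), while the direct computation must actually reach \(X_0\). The analytic error is weakest at moderate \(x\), and the cost of the smoothing widens the gap between the two regimes. I would first choose the largest \(T\) for which RH has been verified and optimize the smoothing width against \(x\). If needed, I would extend the direct prime-power computation further, which is cheap with a sieve, until the analytic \(\varepsilon\) falls below \(0.03883\). Finally, I would note that \(\psi(x)\sim x\) forces \(\psi(x)/x\to1\). So once both regimes are covered, the global maximum is the value at \(113\), which gives \eqref{eq:RS-335} for all \(x>0\).
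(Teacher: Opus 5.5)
The paper gives no proof of this statement (it is imported verbatim as Theorem~12 of Rosser and Schoenfeld), and your two-regime plan is in substance the standard argument behind that citation, so it is correct as an outline: you reduce $\sup_{0<x\le X_0}\psi(x)/x$ to the values $\psi(q)/q$ at prime powers $q\le X_0$, where indeed $\psi(113)\approx117.3867$ and $\psi(113)/113\approx1.038821$, and you handle $x>X_0$ by a smoothed explicit formula using zeros verified on the critical line up to height $T$, an explicit zero-free region and an explicit bound for $N(T)$. The one point to tighten is that, to get the \emph{maximum} at $113$ and not merely the inequality, the analytic regime must give $\varepsilon<\psi(113)/113-1\approx0.038821$ rather than just $\varepsilon<0.03883$; this is harmless because $\varepsilon$ at any sensible crossover is far smaller, and all the real work lies in the explicit numerics you defer, which is exactly what the citation supplies.
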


\begin{proposition}[Legendre's criterion, Theorem~184 of \cite{HardyWright1979}]\label{thm:quoted-legendre} Let $x$ be irrational and let $\frac ab$ be a reduced rational number with $b\geq 1$.
If
\[
\left|\frac ab-x\right|<\frac1{2b^2},
\]
then \(a/b\) is a convergent of the simple continued fraction of \(x\).
\end{proposition}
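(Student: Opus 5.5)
The statement to prove is Legendre's criterion itself. I will follow the classical argument: embed $a/b$ as the last convergent of a finite continued fraction whose length has the right parity, and then show that $x$ is obtained by appending one further complete quotient that exceeds $1$.

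\emph{Normalization.} Since $x$ is irrational, $x\neq a/b$. I write
\[
x-\frac ab=\frac{\varepsilon\theta}{b^2},\qquad \varepsilon\in\{\pm1\},\qquad 0<\theta<\tfrac12,
\]
where the bound on $\theta$ is exactly the hypothesis. Next I expand $a/b=[a_0;a_1,\dots,a_n]$ as a finite simple continued fraction. Because of the identity $[\dots,a_n]=[\dots,a_n-1,1]$, valid when $a_n\ge2$ or $n=0$, I can choose the length so that $(-1)^n=\varepsilon$. This one-step adjustment of the parity is the only delicate bookkeeping point. I let $p_k/q_k$ denote the convergents, with the conventions $p_{-1}=1$ and $q_{-1}=0$. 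Since $a/b$ is reduced and $b\ge1$, and since $\gcd(p_n,q_n)=1$ with $q_n>0$, we get $p_n=a$ and $q_n=b$.

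\emph{Defining the tail $\omega$.} The map $t\mapsto (tp_n+p_{n-1})/(tq_n+q_{n-1})$ is a M\"obius transformation, so I define $\omega$ by
\[
x=\frac{\omega p_n+p_{n-1}}{\omega q_n+q_{n-1}}.
\]
Then $\omega$ is irrational, because $x$ is. Using the determinant identity $p_{n-1}q_n-p_nq_{n-1}=(-1)^n$, I compute
\[
x-\frac{p_n}{q_n}=\frac{(-1)^n}{q_n(\omega q_n+q_{n-1})}.
\]
Comparing this with $\varepsilon\theta/q_n^2$ and using $(-1)^n=\varepsilon$ gives $\theta=q_n/(\omega q_n+q_{n-1})$. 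Solving for $\omega$ yields
\[
\omega=\frac1\theta-\frac{q_{n-1}}{q_n}.
\]
Since $1/\theta>2$ and $0\le q_{n-1}/q_n\le1$, this gives $\omega>1$. This inequality is the heart of the argument, and it is exactly where the constant $\tfrac12$ in the hypothesis is used.

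\emph{Conclusion.} Because $\omega>1$ is irrational, its simple continued fraction $\omega=[a_{n+1};a_{n+2},\dots]$ has $a_{n+1}\ge1$. Hence $x=[a_0;a_1,\dots,a_n,a_{n+1},\dots]$, and this is a legitimate infinite simple continued fraction: all partial quotients after the first are positive integers. Since the simple continued fraction expansion of an irrational number is unique, it is the expansion of $x$. Therefore $p_n/q_n=a/b$ is its $n$-th convergent.

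The only point I expect to need care is the parity adjustment, together with the degenerate case $n=0$ (where $b=1$ and $q_{-1}=0$). Both are handled by the convention $q_{-1}=0$ and the splitting $a_n=(a_n-1)+1/1$. Everything else is routine algebra.
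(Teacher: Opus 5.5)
Your proof is correct, and it is the classical argument behind Theorem~184 of Hardy--Wright, which the paper quotes without reproducing a proof: you choose the parity of the expansion of $a/b$ to match the sign of $x-a/b$, write $x=(\omega p_n+p_{n-1})/(\omega q_n+q_{n-1})$, and obtain $\omega=1/\theta-q_{n-1}/q_n>1$. The only difference is that Hardy--Wright finish by appealing to their Theorem~172, whereas you invoke uniqueness of the simple continued fraction of an irrational number directly; both rest on the same fact that a complete quotient $\omega>1$ leaves $a_0,\dots,a_n$ unchanged, including the edge cases $q_{n-1}=0$ and $q_{n-1}=q_n$.
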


\section{Preparations}\label{sec:prep}
In this section, we prepare some preliminary results that will be used frequently later.
First, we collect our notations
\[
n=2w+\delta,\qquad \delta\ge0,\qquad {\bar\omega}=n-w=w+\delta,\qquad M={\bar\omega}+d.
\]
We shall repeatedly use two bounds which are valid for every \(n\ge2w\) and $e\geq 1$.  First, the strict Roos bound in \cref{prop:strict-roos} gives
\begin{equation}\label{eq:section5-w-over-z}
\frac w{\bar\omega}>\frac e{e+1}.
\end{equation}
Second, \cref{prop:lower bound of w} implies
\begin{equation}\label{eq:section5-universal-weight}
\begin{aligned}
  w>&V_e:=2e^2+4e+1, \qquad n\geq 2w.
\end{aligned}
\end{equation}
Thus \({\bar\omega}\ge w>2e^2\), and ${\bar\omega}<M={\bar\omega}+d<2{\bar\omega}.$
By  \cref{cor:prime factors from chain divisitilities},
\begin{equation}\label{eq:section5-input}
 {\bar\omega}<p^{s_p}\le M\quad\text{for every prime }p\mid\Phi_e(n,w),
\end{equation}
where \(\Phi_e(n,w)\) is squarefree and one localized exponent \(s_p\ge1\) is fixed for each prime divisor \(p\). Finally, \cref{cor:moving-prime-detector} shows that every prime in
\begin{equation}\label{eq:section5-moving-primes}
\mathcal P_e:=\left\{p\text{ prime}:e<p\le Q_e:=\frac{4e+2}{3}\right\}
\end{equation}
divides \(\Phi_e(n,w)\).

When $e\geq 2$, by \cref{cor:str_redund},
\begin{equation}\label{eq:section5-input1}
d\le e\sqrt w.
\end{equation}
 Combining \({\bar\omega}\ge w>2e^2\),  we have when $e\geq 2$,
\begin{equation}\label{eq:section5-M-less-2z}
\frac d{\bar\omega}\le\frac e{\sqrt {\bar\omega}}<\frac1{\sqrt2}.
\end{equation}

\begin{lemma}[General exponent caps]\label{lem:section5-general-cap}
Suppose that an $e$-perfect code exists in $\textup{J}(n,w)$ with $e\geq 2$,  
and there exists an odd prime divisor $q$ of $\Phi_e(n,w)$ such that
 \[5\log e+3\eta_e\leq 100 \log q,\]
  where $\eta_e:=\log(1+e/\sqrt {\bar\omega})$. Then for any prime divisor $p\neq q$ of  $\Phi_e(n,w)$,
its localized exponent
\[
s_p<
\begin{cases}
5040\log q+\frac{2\log e+\eta_e}{\log p}, & p \text{ odd},\\
5040\frac{\log q}{\log 2}+\frac{2\log e+\eta_e}{\log 2}, & p=2.
\end{cases}
\]


\end{lemma}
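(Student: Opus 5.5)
The plan is to compare the two prime powers $p^{s_p}$ and $q^{s_q}$. Both lie in the same short window $(\bar\omega,M]$ by \eqref{eq:section5-input}, so their ratio is very close to $1$. Then apply Laurent's estimate (\cref{lem:section5-Laurent}) to $\Lambda=s_p\log p-s_q\log q$. Throughout write $s=s_p$ and $t=s_q$.

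\textbf{Step 1: upper bound on $|\Lambda|$.} Since $\bar\omega<p^s,q^t\le M=\bar\omega+d$, we get
\[
|\Lambda|<\log\frac{M}{\bar\omega}=\log\Bigl(1+\frac d{\bar\omega}\Bigr).
\]
By \eqref{eq:section5-M-less-2z}, $d/\bar\omega\le e/\sqrt{\bar\omega}$, so $|\Lambda|\le\log(1+e/\sqrt{\bar\omega})=\eta_e$. For a finer bound I would use $\log(1+x)\le x$, which gives
\[
\log|\Lambda|\le\log e-\tfrac12\log\bar\omega .
\]
The natural way to make $s$ appear is through $p^s\le M$ and $M<2\bar\omega$, so $\log\bar\omega\ge s\log p-\log 2$. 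Because the target bound contains $2\log e+\eta_e$, I would instead write $\log\bar\omega>\log M-\log(1+e/\sqrt{\bar\omega})\ge s\log p-\eta_e$. Combining, $-\log|\Lambda|\ge \tfrac12(s\log p-\eta_e)-\log e$, that is,
\[
s\log p\le 2(-\log|\Lambda|)+2\log e+\eta_e .
\]
This is where the additive term $(2\log e+\eta_e)/\log p$ comes from. By \cref{lem:section5-Laurent}, $\Lambda\neq0$.

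\textbf{Step 2: Laurent's lower bound.} This gives
\[
-\log|\Lambda|\le 25.2H^2\ell(p)\ell(q),
\]
with $B=s/\ell(q)+t/\ell(p)$. Inserting it into Step 1 yields
\[
s\log p\le 50.4H^2\ell(p)\log q+2\log e+\eta_e ,
\]
using $\ell(q)=\log q$ since $q\ge3$. Dividing by $\log p$ gives the odd case with $\ell(p)=\log p$; for $p=2$ we have $\ell(p)=1$, and dividing by $\log 2$ gives the second case. It therefore remains to show $50.4H^2\le 5040$, i.e.\ $H\le 10$, i.e.\ $\log B+0.38\le 10$. If $H=10$ we are done. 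Otherwise $H=\log B+0.38>10$.

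\textbf{Step 3: handling the case $H>10$.} This is the main obstacle. The resulting inequality is self-referential. Since $t\log q\approx s\log p$ (both are within $\eta_e$ of $\log\bar\omega$), we have $B\lesssim 2s\log p/(\ell(p)\log q)$, roughly $2s$. Step 2 then forces $s\lesssim 50.4(\log(2s)+0.38)^2\log q+(2\log e+\eta_e)/\log p$. With $H>10$ we have $s\gtrsim e^{9.6}/2$, which exceeds this polylogarithmic bound unless $\log q$ or $\log e$ is enormous. The hypothesis $5\log e+3\eta_e\le100\log q$ is exactly what rules this out: it bounds the additive term by a multiple of $\log q$. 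So I would argue by contradiction. Assume $\log B>9.62$, bound $B\le C\cdot s$, and use Step 2 together with the hypothesis to get
\[
s\le \log q\,\bigl(50.4(\log B+0.38)^2+20\bigr).
\]
Symmetrically, bound $t$ in terms of $\log p$. Then show that $B$ is at most a polylog of itself, which contradicts $B>e^{9.62}$. The delicate part is the bookkeeping of the constants, together with the factor coupling $s/\ell(q)$ and $t/\ell(p)$ via $t\log q\le s\log p+\eta_e$. This keeps $B\le 2s\log p/(\ell(p)\log q)+O(1)\le 2s/\log 3+O(1)$ in the odd case, with a similar bound for $p=2$.
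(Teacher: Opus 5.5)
Your Steps 1 and 2 follow the paper's argument. The paper combines $|\Lambda|<e/\sqrt{\bar\omega}$ with Laurent to get $\log\bar\omega<2\log e+50.4H^2\ell(p)\ell(q)$, and then uses $s_p\le(\log\bar\omega+\eta_e)/\log p$; that is your chain.

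The gap is in Step 3, which is the only real work beyond quoting Laurent. There you repeatedly drop the normalization by $\ell(q)$ in $B$: you write ``roughly $2s$'', then ``bound $B\le C\cdot s$'', and finally $B\le 2s/\log 3+O(1)$. Combined with $s\le\log q\,(50.4H^2+c)$, this only gives $B\lesssim \log q\cdot H^2$. Nothing bounds $q$: it may be a prime just above $\bar\omega$ with $s_q=1$. So you cannot conclude $H=10$. You would get a threshold for $H$ that grows with $q$, and hence not the constant $5040=50.4\cdot 10^2$.

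Your reading of the hypothesis is also off. The condition $5\log e+3\eta_e\le100\log q$ does not rule out a large $\log q$. It bounds $\log e$ and $\eta_e$ relative to $\log q$, so it only makes the additive term $(2\log e+\eta_e)/\log q$ at most a constant. Concretely, $2\log e+\eta_e\le 40\log q$, so your ``$+20$'' is too small, though harmlessly so.

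The fix, which is what the paper does, is to keep $\ell(q)$ and $\ell(p)$ in $B$, so that the factor $\ell(p)\ell(q)$ coming from Laurent cancels. Use $s\le(\log\bar\omega+\eta_e)/\log p$, $t\le(\log\bar\omega+\eta_e)/\log q$ and $\log\bar\omega<2\log e+50.4H^2\ell(p)\ell(q)$. This gives
\[
B<100.8H^2+\frac{4\log e+2\eta_e}{\log q}\ \ (p\text{ odd}),\qquad
B<\Bigl(1+\frac1{\log 2}\Bigr)\Bigl(50.4H^2+\frac{2\log e+\eta_e}{\log q}\Bigr)\ \ (p=2).
\]
The hypothesis makes the additive parts at most $100$, so $B\le130H^2$ whenever $H\ge10$.

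If $H>10$, then $B=\exp(H-0.38)$. The map $x\mapsto\exp(x-0.38)/x^2$ is increasing for $x>2$ and exceeds $150$ at $x=10$, so $B>150H^2$, a contradiction. Hence $H=10$, and your Step 2 yields exactly the stated caps.
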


\begin{proof} For any prime divisor $p\neq q$, denote $u=s_p$. 
Write $q^v$ for the  localized power of $q$. Since the two powers $p^u,q^v$ are distinct and lie in $(\bar\omega, \bar\omega+d]$,
\begin{equation}\label{eq:section5-Lambda-upper}
0<|u\log p-v\log q|\le \log\left(1+\frac {d-1}{\bar\omega+1}\right)<\frac {d-1}{\bar\omega+1}\le\frac e{\sqrt {\bar\omega}}.
\end{equation}
Combining this with \cref{lem:section5-Laurent} gives
\begin{equation}\label{eq:section5-logz-Laurent}
\log {\bar\omega}<2\log e+50.4H^2\ell(p)\ell(q).
\end{equation}
Denote $\eta_e:=\log(1+e/\sqrt {\bar\omega})$.
The localization ${\bar\omega}<p^u,q^v\le {\bar\omega}+d \leq {\bar\omega}(1+e/\sqrt {\bar\omega})$ gives
\begin{equation}\label{eq:uv}
 u\le\frac{\log {\bar\omega}+\eta_e}{\log p},\qquad v\le\frac{\log {\bar\omega}+\eta_e}{\log q}.
\end{equation}

If both $p,q$ are odd, the parameter $B$ in \cref{lem:section5-Laurent}  satisfies
\[
B=\frac{u}{\log q}+\frac{v}{\log p}< \frac{2(\log {\bar\omega}+\eta_e)}{\log p\log q}< 100.8H^2+ \frac{4\log e+2\eta_e}{\log p\log q}< 100.8H^2+ \frac{4\log e+2\eta_e}{\log q}.
\]
If $p=2$ (i.e., $\ell(p)=1$) and $q$ is odd, then
\[
B=\frac{u}{\log q}+v < \frac{(\log {\bar\omega}+\eta_e)}{\log q}(1+\frac{1}{\log 2})< 123.5H^2+ 2.45\frac{2\log e+\eta_e}{\log q}<123.5H^2+ \frac{5\log e+3\eta_e}{\log q}.
\]
Since $5\log e+3\eta_e\leq 100 \log q$, if $H\geq 10$, for both cases $B\leq 130H^2$.
If $H>10$, then $B=\exp(H-0.38)$, whereas $\exp(x-0.38)/x^2>150$ at $x=10$ and increases for $x>2$. Hence $B>150H^2$, contradicting $B\leq 130H^2$. Therefore, $H=10$. Hence by \eqref{eq:uv}, we obtain the required upper bound of $u$.
\end{proof}

The next elementary packing lemma is used  for upper-bounding the reciprocal-mass.

\begin{lemma}[Exponent packing]\label{lem:section5-exponent-packing}
Let $\mathcal E$ be a finite set of integers at least $3$ and at most $N$ such that $\gcd(s,t)\le2$ whenever $s\ne t\in \mathcal E$. 
Then
\begin{equation}\label{eq:section5-pack-gcd2}
\sum_{s\in\mathcal E}\frac1s\le\frac14+\sum_{\substack{3\le r\le N\\r\text{ prime}}}\frac1r.
\end{equation}
\end{lemma}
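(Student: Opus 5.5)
The plan is to map each $s\in\mathcal E$ to a prime divisor of it and show that this map is injective on all but a small exceptional part of $\mathcal E$. Since $s\ge3$, either $s$ has an odd prime factor, or $s$ is a power of $2$ with $s\ge4$.

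\textbf{Step 1: powers of two.} If $s,t\in\mathcal E$ are distinct powers of $2$, both at least $4$, then $\gcd(s,t)=\min(s,t)\ge4>2$. This contradicts the hypothesis, so $\mathcal E$ contains at most one power of $2$. That element is at least $4$, so it contributes at most $\frac14$ to the sum. This is exactly the extra term $\frac14$ in \eqref{eq:section5-pack-gcd2}.

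\textbf{Step 2: elements with an odd prime factor.} For every other $s\in\mathcal E$, choose an odd prime $r(s)\mid s$. Then $3\le r(s)\le s\le N$ and $\frac1s\le\frac1{r(s)}$.

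The key point is that $r$ is injective on these elements. Suppose $r(s)=r(t)=r$ with $s\ne t$. Then $r\mid\gcd(s,t)$, so $\gcd(s,t)\ge r\ge3>2$, which is a contradiction. Hence distinct such elements give distinct odd primes $r\le N$. Summing, their contribution is at most $\sum_{3\le r\le N,\,r\text{ prime}}\frac1r$.

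\textbf{Step 3: conclusion.} Adding the bound from Step 1 to the bound from Step 2 gives \eqref{eq:section5-pack-gcd2}.

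The only step needing care is the injectivity in Step 2, and it is immediate because the hypothesis rules out any common divisor exceeding $2$. The other point to check is that a power of $2$ in $\mathcal E$ is at least $4$, which holds because every element of $\mathcal E$ is at least $3$.
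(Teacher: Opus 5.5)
Your proof is correct and is essentially the paper's argument. Both proofs assign each element an odd prime divisor, note that these primes are distinct because a shared odd prime would force $\gcd(s,t)\ge3$, and set aside a single exceptional element worth at most $\frac14$. The difference is only bookkeeping: the paper first halves even $s$ to obtain pairwise coprime $m_s$ and sets aside the unique $s$ divisible by $4$, whereas you set aside the unique power of $2$ directly, which is slightly more streamlined.
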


\begin{proof}
For $s\in\mathcal E$, put $m_s=s$ when $s$ is odd and $m_s=s/2$ when $s$ is even. Then $m_s\ge 2$ as $s\ge 3$. Since $\gcd(s,t)\le2$, $\gcd(m_s,m_t)\le 2$. However, $\gcd(m_s,m_t)= 2$ would make $4\mid\gcd(s,t)$, which is impossible. So the integers $m_s$ are pairwise coprime,
which implies that at most one $m_s$ is even. Equivalently,  at most one $s$ is divisible by $4$, which contributes at most $1/4$. For every remaining $m_s$, choose an odd prime divisor $r_s\mid m_s$. Pairwise coprimality makes these primes distinct, and $r_s\le m_s\le s\le N$. Since $1/s\le1/r_s$, summing gives \eqref{eq:section5-pack-gcd2}.
\end{proof}

\section{Exclusion of radii 2, 9, 10, 12 and 16}\label{sec:complete-exclusion}

\begin{lemma}\label{lem:section5-comparison-primes}
For $e=9,10,12,16$, respectively, the following primes divide $\Phi_e(n,w)$:
\[
2,5,11\quad(e=9),\qquad 11,13\quad(e=10),\qquad 2,7,13\quad(e=12),\qquad 3,17,19\quad(e=16).
\]
\end{lemma}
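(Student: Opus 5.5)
The plan is to obtain every listed prime from one of two earlier results: \cref{cor:moving-prime-detector} for primes just above $e$, and \cref{thm:modulo-p-criterion} for the small primes. No further structural input should be needed; the work is a finite residue check for each of the four radii.

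\emph{Moving primes.} For each $e$ I list the primes in $\mathcal P_e=\{p: e<p\le (4e+2)/3\}$ and apply \cref{cor:moving-prime-detector}:
\begin{itemize}
\item $e=9$: $Q_9=38/3\approx 12.67$, which gives the prime $11$.
\item $e=10$: $Q_{10}=14$, which gives $11$ and $13$.
\item $e=12$: $Q_{12}=50/3\approx16.67$, which gives $13$.
\item $e=16$: $Q_{16}=22$, which gives $17$ and $19$.
\end{itemize}
This already settles $e=10$ completely.

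\emph{Small primes.} For the remaining primes I find a prime power $P=p^k$ and a $\gamma$ with $e\equiv-\gamma\pmod P$, $1\le\gamma<P$ and $\gamma\le\lfloor (P+2)/4\rfloor$. Then \cref{thm:modulo-p-criterion} gives $p\mid\Phi_e(n,w)$. I will try $k=1,2,3,\dots$ in turn.
\begin{itemize}
\item $e=9$, $p=2$: take $P=2$. Since $9\equiv-1\pmod 2$, we have $\gamma=1\le\lfloor 4/4\rfloor=1$.
\item $e=9$, $p=5$: take $P=5$. Since $9\equiv-1\pmod 5$, we have $\gamma=1\le\lfloor 7/4\rfloor=1$.
\item $e=12$, $p=2$: the choices $P=2,4,8$ fail. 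For $P=2,4$ the residue is $0$, so no admissible $\gamma$ exists. For $P=8$ we get $\gamma=4>2$. Take $P=16$ instead: $12\equiv-4\pmod{16}$, and $\gamma=4\le\lfloor18/4\rfloor=4$.
\item $e=12$, $p=7$: take $P=7$. Since $12\equiv-2\pmod 7$, we have $\gamma=2\le\lfloor 9/4\rfloor=2$.
\item $e=16$, $p=3$: $P=3$ fails, since $\gamma=2>\lfloor5/4\rfloor=1$. Take $P=9$ instead: $16\equiv-2\pmod 9$, and $\gamma=2\le\lfloor 11/4\rfloor=2$.
\end{itemize}

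The only step needing care is choosing the right exponent $k$ for $p=2$ when $e=12$ and for $p=3$ when $e=16$. The lowest prime power does not work in either case, so one must go up to $16$ and $9$ respectively. Everything else is a direct check of the inequalities.
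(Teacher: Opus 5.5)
Your proof is correct and is essentially the paper's own argument. The paper applies \cref{thm:modulo-p-criterion} to every listed prime, using exactly your parameters $(2,2,1),(5,5,1),(2,16,4),(7,7,2),(3,9,2)$ for the small primes; for the primes just above $e$ it takes $P=p$ and $\gamma=p-e$ directly (e.g.\ $(19,19,3)$ for $e=16$), which is precisely the special case that gives \cref{cor:moving-prime-detector}, so citing the corollary instead changes nothing.
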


\begin{proof}
For the four small radii, apply \cref{thm:modulo-p-criterion}.  The parameters $(p,P,\gamma)$ are
\[
\begin{array}{c|c}
e&(p,P,\gamma)\\ \hline
9&(2,2,1),(5,5,1),(11,11,2)\\
10&(11,11,1),(13,13,3)\\
12&(2,16,4),(7,7,2),(13,13,1)\\
16&(3,9,2),(17,17,1),(19,19,3).
\end{array}
\]

\end{proof}

The next proposition provides the uniform weight bound \(w>e^4\). Combined with \(d\le e\sqrt w\), this makes the localization interval \(({\bar\omega},{\bar\omega}+d]\) sufficiently short for the reciprocal-mass and exponent-spacing arguments used in the remainder of this section.

\begin{proposition}\label{prop:section5-weight-amplification}
If
$e\in\{9,10,12,16\}$
and a nontrivial $e$-perfect code in $J(n,w)$ exists, then
\begin{equation}\label{eq:section5-weight-bounds}
w>e^4.
\end{equation}
\end{proposition}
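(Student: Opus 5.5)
We argue by contradiction. Suppose $w\le e^4$. Then everything is confined to a finite, explicit range: $V_e<w\le e^4$, and by \eqref{eq:section5-w-over-z} also $w\le\bar\omega<\frac{e+1}{e}w$. The goal is to show that the localization \eqref{eq:section5-input} forces too many prime powers into the short window $(\bar\omega,M]$, where $M-\bar\omega=d\le e\sqrt w\le e^3$.

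The key input is \cref{lem:section5-comparison-primes}, together with the moving primes $\mathcal P_e$ of \eqref{eq:section5-moving-primes}. For each $e$ this gives a fixed set of at least two or three primes that divide $\Phi_e(n,w)$. For $e=9$ these are $2,5,11$ (and $\mathcal P_9$ adds $11,13$). For $e=10$ they are $11,13$ (with $\mathcal P_{10}$ adding $11,13$). For $e=12$ they are $2,7,13$ (with $\mathcal P_{12}$ adding $13$). For $e=16$ they are $3,17,19$ (with $\mathcal P_{16}$ adding $17,19$).

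Each such prime $p$ has a power $p^{s_p}$ in $(\bar\omega,\bar\omega+d]$. So all of these powers lie in a common interval of length at most $e\sqrt{\bar\omega}$ near $\bar\omega\le \frac{e+1}{e}e^4$.

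\textbf{Step 1 (necessary conditions).} Before using the prime powers, I would record the cheap filters from \cref{prop:embedded-steiner}:
\begin{itemize}
\item the divisibilities \eqref{eq:embedded-steiner-shift} for all $0\le j\le e+1$;
\item the congruence \eqref{eq-congruence-e+2};
\item the residue windows of \cref{lem:residue-localization}, applied to the same $(p,P,\gamma)$ used in \cref{lem:section5-comparison-primes}.
\end{itemize}
These restrict $w$ and $\bar\omega$ modulo $\operatorname{lcm}$ of several prime powers, giving a sparse set of admissible residues.

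\textbf{Step 2 (pairs of prime powers).} For two comparison primes $p,q$, the condition $|p^{u}-q^{v}|<d\le e\sqrt{\bar\omega}$ with $\bar\omega\lesssim e^4$ leaves only a handful of pairs $(p^u,q^v)$. This is a finite enumeration: $u\le \log(2e^4)/\log p$. For example, for $e=16$ we need powers of $3,17,19$ all within about $16\cdot 256$ of one another below roughly $7\cdot 10^4$. Candidates such as $17^3=4913$, $19^3=6859$ and $3^8=6561$ are already too far apart or fall outside the admissible range. Each surviving triple pins $\bar\omega$ to an interval of length $<d$, which determines $\delta$ and $w$ to within a short range.

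\textbf{Step 3 (finite check).} For each surviving $(w,\bar\omega)$, I would test three things:
\begin{itemize}
\item squarefreeness of $\Phi_e(n,w)$ and the divisibility $\Phi_e(n,w)\mid\binom nw$;
\item whether every prime factor of $\Phi_e(n,w)$ has a power in $(\bar\omega,\bar\omega+d]$, as required by \cref{cor:prime factors from chain divisitilities};
\item if still needed, the Lloyd factorization \eqref{eq:section5-Lloyd-factorization}, which requires $(e!)^2\Phi_e$ to split as $\prod(w-h_i)(n-w+h_i+1)$ with distinct $h_i<d$.
\end{itemize}
Since $\Phi_e(n,w)\approx \binom{w}{e}\binom{\bar\omega}{e}$ has size about $\bar\omega^{2e}/(e!)^2$, it must be a product of distinct primes each having a power in a window of length $d$. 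Counting primes with a power in that window shows there are far too few to reach this size. This gives a clean contradiction via the upper bound on $S$ in \eqref{eq:section5-mass-bounds}.

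The main obstacle is Step 2/3 bookkeeping for $e=10$, where only two comparison primes are available. The pair $11^u,13^v$ gives weaker pinning, so more $\bar\omega$ survive the enumeration. There I would first try the mass bound: every prime divisor $p$ of $\Phi_e(n,w)$ satisfies $p\le M<2\bar\omega$, and at most $e$ of them have exponent $1$ by \cref{lem:section5-Lloyd-mass}. The remaining primes with $s_p\ge2$ are at most $\sqrt{M}$, and there are few of them having powers in the short window. This should show $\log\Phi_e(n,w)>\log(\text{product of admissible primes})$ whenever $w\le e^4$, with a direct computer check as a fallback.
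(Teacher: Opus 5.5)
\textbf{Verdict: there is a genuine gap.} The $e=12$ case is never handled, and your proposal places the difficulty in the wrong case.

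\textbf{What works.} Your Step 2 is the right mechanism. Carried out, it is exactly the paper's argument for $e=9,10,16$. From $\bar\omega>V_e$ and $M<e^4+2e^3$, the localized powers of the detected primes are forced:
\begin{itemize}
\item for $e=9$: $11^3$, and then $5^4$;
\item for $e=10$: $11^3$ and $13^3$;
\item for $e=16$: $17^3$ and $19^3$.
\end{itemize}
In each case the forced pair is farther apart than $d\le e\sqrt{\bar\omega}$ permits. But you never execute this.

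\textbf{Where it fails.} The case $e=10$ is not an obstacle; it is immediate, since $13^3-11^3=866>10\sqrt{1331}$. The case where pair comparison genuinely fails is $e=12$:
\begin{itemize}
\item $13^3$ is forced, hence $\bar\omega>13^3-12\sqrt{2197}>1634$;
\item this in turn forces $2^{11}$ and $7^4$, giving $1857<\bar\omega<2048$;
\item but $2048,2197,2401$ lie within $353<12\sqrt{\bar\omega}$ of one another, so no gap contradiction exists.
\end{itemize}
Your proposal has nothing concrete for this surviving interval, so as written the $e=12$ case is open.

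\textbf{How the paper closes it.} It uses the filters you list in Step 1 but never deploy. \cref{lem:residue-localization} with $(13,13,1)$, $(7,7,2)$, $(2,16,4)$ gives $w,\bar\omega\equiv12\pmod{13}$, $\equiv4$ or $5\pmod 7$, and $\equiv9,\dots,12\pmod{16}$. The congruence \eqref{eq-congruence-e+2} with $e+2=14$ sharpens the residue mod $7$ to $5$. Together these give $\bar\omega\bmod1456\in\{12,649,922,1195\}$, and none of these classes meets $(1857,2048)$.

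\textbf{Your fallback would also work, once made explicit.} In the pinned range, the only prime powers with exponent $\ge2$ in $(\bar\omega,M]\subset(1857,2592)$ are $47^2,13^3,7^4,3^7,2^{11}$. With at most $12$ exponent-one primes, this gives $S\le12+\frac12+\frac13+\frac14+\frac17+\frac1{11}<13.4$. On the other side, $\Phi_{12}\ge\binom{w}{12}\binom{\bar\omega}{12}$ with $w>\frac{12}{13}\bar\omega$ gives $\log\Phi_{12}/\log M>17$. Note that it is the lower bound $S\ge\log\Phi_e/\log M$ in \eqref{eq:section5-mass-bounds} that does the work, not the upper bound you cite.

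\textbf{Minor slips.} $\mathcal P_9=\{11\}$ only, since $13>38/3$, and \cref{thm:modulo-p-criterion} does not detect $13$ for $e=9$. The Step 3 checks of $\Phi_e\mid\binom nw$ and of the Lloyd factorization are unnecessary.
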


\begin{proof}
Throughout, suppose for contradiction that $w\le e^4$.  By \eqref{eq:section5-w-over-z} and $d\le e\sqrt w$,
\begin{equation}\label{eq:section5-quartic-basic-upper}
{\bar\omega}<e^4+e^3,\qquad M={\bar\omega}+d<e^4+2e^3.
\end{equation}
For $e=9$, \cref{lem:section5-comparison-primes} gives prime divisors $5,11$, \eqref{eq:section5-universal-weight} gives ${\bar\omega}>V_9=199>11^2,5^3$, while \eqref{eq:section5-quartic-basic-upper} gives $M<8019<11^4$.  Hence the localized $11$-power is $11^3=1331$, so ${\bar\omega}<1331$.  Therefore $M={\bar\omega}+d<1331+9\sqrt{1331}<1660<5^5$.  The localized $5$-power is consequently $5^4=625$, so ${\bar\omega}<625$. The localization that $11^3,5^4$ lies in the interval $({\bar\omega},{\bar\omega}+d]$ gives
\[
706=11^3-5^4<d\le9\sqrt {\bar\omega}<9\sqrt{625}=225,
\]
a contradiction.

For $e=10$, \cref{lem:section5-comparison-primes} gives prime divisors $11,13$, \eqref{eq:section5-universal-weight} gives ${\bar\omega}>V_{10}=241>13^2$, while \eqref{eq:section5-quartic-basic-upper} gives $M<12000<11^4$.  Thus the localized $11$- and $13$-powers are $11^3$ and $13^3$.  In particular ${\bar\omega}<11^3=1331$, and hence
\[
866=13^3-11^3<d\le10\sqrt {\bar\omega}<10\sqrt{1331}<365,
\]
a contradiction.

For $e=12$, \cref{lem:section5-comparison-primes} gives prime divisors $2,7,13$,  \eqref{eq:section5-universal-weight} gives ${\bar\omega}>V_{12}=337>13^2$, while \eqref{eq:section5-quartic-basic-upper} gives $M<24192<13^4$.  Hence the localized $13$-power is $13^3=2197$, so ${\bar\omega}<2197$ and
\[
M={\bar\omega}+d<{\bar\omega}+12\sqrt {\bar\omega}<2197+12\sqrt{2197}<2760.
\]
Since $M<2760<7^5,2^{12}$ and ${\bar\omega}=M-d> 13^3-12\sqrt{2197}>1634>7^3,2^{10}$, the localized $7$- and $2$-power are $7^4=2401$ and $2^{11}=2048$. Consequently
\[
{\bar\omega}<2048,\qquad {\bar\omega}=M-d>7^4-12\sqrt{2048}>1857.
\]
We now use the residue localization already contained in \cref{lem:residue-localization}.  For the three detecting parameters $(p,P,\gamma)=(13,13,1),(7,7,2),(2,16,4)$ given by \cref{thm:modulo-p-criterion},
\[
w,{\bar\omega}\equiv12\pmod{13},\qquad w,{\bar\omega}\equiv4\text{ or }5\pmod7,\qquad w,{\bar\omega}\equiv9,10,11,\text{ or }12\pmod{16}.
\]
\cref{prop:embedded-steiner} with $e+2=14$ sharpens the congruence modulo $7$ to $w\equiv{\bar\omega}\equiv5\pmod7$. 
A short calculation gives the possible residues modulo \(13\cdot7\cdot16=1456\). Writing \(w\text{ or }{\bar\omega}=12+13t\), the congruence modulo \(7\) gives \(t\equiv0\pmod7\), while the congruence modulo \(16\) gives \(t\equiv0,1,6,11\pmod{16}\).  Combining these modulo \(7\cdot16=112\) yields
\[
t\equiv0,49,70,91\pmod{112},
\]
and hence
\[
w,{\bar\omega}\pmod{1456}\in\{12,649,922,1195\}.
\]
None of these residue classes has a representative in $1857<{\bar\omega}<2048$, a contradiction.

For $e=16$, \cref{lem:section5-comparison-primes} gives prime divisors $17,19$, \eqref{eq:section5-universal-weight} gives ${\bar\omega}>V_{16}=577>19^2$, while \eqref{eq:section5-quartic-basic-upper} gives $M<73728<17^4$.  Thus the detected $17$- and $19$-powers are both cubes.  Hence ${\bar\omega}<17^3=4913$, but
\[
19^3-17^3=1946<d\le16\sqrt {\bar\omega}<16\sqrt{4913}<1122,
\]
a contradiction.

Therefore, the quartic bound follows in every case.
\end{proof}

\begin{lemma}[Odd exponent caps]\label{lem:section5-moving-cap}
If $e\in\{9,10,12,16\}$ and $p$ is an odd prime divisor of $\Phi_e(n,w)$, then
\begin{equation}\label{eq:section5-small-odd-cap}
s_p<15000.
\end{equation}
\end{lemma}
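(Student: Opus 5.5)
We apply \cref{lem:section5-general-cap} with a well-chosen odd comparison prime $q$ from \cref{lem:section5-comparison-primes}. For each $e\in\{9,10,12,16\}$ we take $q=11,11,13,17$ respectively; these are odd prime divisors of $\Phi_e(n,w)$. The plan has three steps: check the hypothesis of \cref{lem:section5-general-cap}, evaluate the resulting cap when $p\neq q$, and then handle $p=q$ itself with a second comparison prime.

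\emph{Hypothesis.} By \cref{prop:section5-weight-amplification} we have $\bar\omega\ge w>e^4$. Hence $e/\sqrt{\bar\omega}<1/e$ and
\[
\eta_e=\log(1+e/\sqrt{\bar\omega})<\log(1+1/e)<0.11 .
\]
So $5\log e+3\eta_e<5\log 16+0.33<14.2$, which is far below $100\log q\ge 100\log 11\approx 240$. The hypothesis therefore holds trivially in all four cases.

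\emph{The cap for odd $p\neq q$.} \cref{lem:section5-general-cap} gives
\[
s_p<5040\log q+\frac{2\log e+\eta_e}{\log p}.
\]
Since $p\ge3$, the second term is at most $(2\log 16+0.11)/\log 3<5.2$. The first term is at most $5040\log 17\approx 14280$. Hence $s_p<14290<15000$.

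\emph{The prime $p=q$.} Here we rerun \cref{lem:section5-general-cap} with a second odd comparison prime $q'\neq q$ dividing $\Phi_e$:
\[
q'=5\ (e=9),\qquad q'=13\ (e=10),\qquad q'=7\ (e=12),\qquad q'=19\ (e=16).
\]
The case $e=16$ with $q'=19$ gives the worst bound: $5040\log 19\approx 14840$, plus a correction below $2$, which still stays under $15000$. Equivalently, one can simply take the maximum over the listed odd primes for each $e$; the largest available is $19$ and it yields the bound.

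\emph{Main obstacle.} The only delicate point is numerical margin: $5040\log 19\approx 14840.1$ leaves roughly $160$ of room, and the additive term $(2\log e+\eta_e)/\log p$ is under $6$. So the case $e=16$ with comparison prime $19$ is the tightest, and I would verify it carefully. For $p=19$ itself I would instead compare with $17$, which gives a bound of about $14280$. In every case the bound stays below $15000$, completing the proof.
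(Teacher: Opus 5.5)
Your proof is correct and follows the paper's argument: apply the general exponent-cap lemma with an odd detected prime from the comparison-primes lemma as comparison prime, and switch to a second detected prime when $p$ coincides with the first. The only difference is the choice of comparison primes. The paper uses the smallest available ones ($5,11,7,3$, falling back to $11,13,13,17$), with worst case below $14285$. Your choices ($11,11,13,17$, with fallback up to $19$) give a worst case near $14842$, still under $15000$. One small correction: your hypothesis check should compare against $100\log 5$, not $100\log 11$, since your fallback primes include $5$ and $7$; the inequality still holds comfortably.
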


\begin{proof}
Fix an odd prime $p\mid\Phi_e$ and put $u=s_p$. We can always choose a detected odd prime $q\ne p$ from \cref{lem:section5-comparison-primes}. Since ${\bar\omega}\ge w>e^4$ by \cref{prop:section5-weight-amplification},   $\eta_e=\log(1+e/\sqrt {\bar\omega})<\log(1+1/e)$. Then
\[\frac{5\log e+3\eta_e}{\log q}\leq \frac{5\log 16+3 \log(1+\frac{1}{9})}{\log 3}<13.\]
By \cref{lem:section5-general-cap}, we have
\begin{equation}\label{eq:section5-u-small-cap}
u<5040\log q+\frac{2\log e+\eta_e}{\log p}<5040\log q+\frac{2\log e+\log(1+1/e)}{\log p}.
\end{equation}


For $e=9$, use $q=5$ unless $p=5$, and otherwise use $q=11$. Then \eqref{eq:section5-u-small-cap} with the maximum case $(p,q)=(5,11)$ is less than $12089$. For $e=10$, use $q=11$ unless $p=11$, in which case use $q=13$; then \eqref{eq:section5-u-small-cap} with the maximum case $(p,q)=(11,13)$ is then less than $12930$. For $e=12$, use $q=7$ unless $p=7$, and otherwise use $q=13$; the maximum $u$ with $(p,q)=(7,13)$ is less than $12931$. For $e=16$, use $q=3$ unless $p=3$, and otherwise use $q=17$; the maximum $u$ with $(p,q)=(3,17)$ is less than $14285$. Thus $s_p<15000$ for all four radii.

\end{proof}

\subsection{Exclusion of the radii}
We now start to exclude the radii less than $32$ that survive the prime-square sieve in the first paper. The radius \(e=2\) is treated separately by a continued-fraction argument. For \(e=9,10,12,16\), we use the reciprocal-mass method: the short localization interval forces \(\gcd(s_p,s_q)\le2\) for distinct odd prime divisors, so the exponent-packing lemma, the uniform exponent bound and Dusart's estimate give a numerical upper bound for \(S-e\), which is incomparable with the corresponding lower bound.

\begin{proposition}[Etzion--Schwartz, Proposition~3 of \cite{EtSc04}]\label{prop:quoted-etzion-radius2}
There are no \(2\)-perfect codes in \(J(n,w)\) for all \(n\le40000\).
\end{proposition}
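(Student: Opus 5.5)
Since this statement is quoted from \cite{EtSc04}, the proof I would give is a finite computer verification, organised so that the arithmetic filters stated earlier remove almost all candidate pairs cheaply. By complementation I may assume $n=2w+\delta$ with $\delta\ge0$, so there are only finitely many pairs $(n,w)$ with $n\le 40000$. First I would cut this range down with the structural constraints. \cref{prop:lower bound of w} with $e=2$ forces $w>17$ in general, and $w>29$ when $n>2w$ is even. The strict Roos bound of \cref{prop:strict-roos} gives $n<\tfrac52(w-1)$, so $\delta<\tfrac w2$. Next, \eqref{eq-congruence-e+2} with $e+2=4$ gives $2(w+2)\equiv 2(n-w+2)\equiv 0\pmod 4$, hence $w$ and $n$ are both even. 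Finally, \eqref{eq:embedded-steiner-shift} for $j=1,2,3$ gives the congruences
\[
3\mid w-2,\quad 6\mid\binom{w-1}{2},\quad 10\mid\binom{w}{3},
\]
and the analogous congruences for $n-w$. Together these restrict $w$ and $n-w$ to a few residue classes modulo a small modulus.

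For each surviving pair I would compute
\[
\Phi_2(n,w)=1+w(n-w)+\binom w2\binom{n-w}2 .
\]
I would discard the pair unless all three of the following hold: $\Phi_2(n,w)$ is squarefree; $\Phi_2(n,w)$ divides $\binom nw$; and every prime $p\mid\Phi_2(n,w)$ has a power in $(n-w,\,n-w+d]$ for some $d\le\lfloor2\sqrt w\rfloor$. These are the conditions of \cref{cor:prime factors from chain divisitilities} and \cref{cor:str_redund}. The divisibility $\Phi_2\mid\binom nw$ is best checked prime by prime, computing the $p$-adic valuation of $\binom nw$ with \cref{thm:Kummer} instead of forming the huge binomial.

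The strongest sieve is the Lloyd factorization of \cref{lem:section5-Lloyd-factorization}. For $e=2$ it reads
\[
4\Phi_2(n,w)=(w-h_1)(n-w+h_1+1)(w-h_2)(n-w+h_2+1)
\]
with integers $0\le h_1<h_2=d-1\le 2\sqrt w-1$. For each $(n,w)$ I would loop over the $O(\sqrt w)$ admissible values of $h_2$ and test whether the resulting quotient has the form $(w-h_1)(n-w+h_1+1)$. This amounts to checking that a quadratic in $h_1$ has a nonnegative integer root below $h_2$. Equivalently, the Lloyd polynomial must have two integral zeros in the prescribed range. The total work is about $40000^2\cdot\sqrt{40000}$ elementary operations in the worst case, and far less after the residue filters, so the search is entirely feasible.

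The main obstacle is making the verification convincing rather than merely running it. The reduction steps must be stated exactly as the published conditions, and the integer-root test must be done in exact arithmetic. I would also cross-check with two independent sieves (Lloyd integrality and squarefree-plus-localization), confirming that each one separately leaves no survivors. If a few pairs survived both sieves, I would finish them off individually with \cref{lem:residue-localization} or with the direct divisibility test $\Phi_2\mid\binom nw$.
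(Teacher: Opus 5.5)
The paper does not prove this statement: it is quoted from Etzion--Schwartz and used only to guarantee $\bar\omega>20000$, hence $u\ge15$, in the proof of \cref{thm:section5-two}. Your plan is therefore an independent re-derivation, and its logic is sound. Every filter you list is a general necessary condition quoted in Section~2: the lower bound on $w$, strict Roos, the congruences of \cref{prop:embedded-steiner}, squarefreeness and prime-power localization with $d\le\lfloor2\sqrt w\rfloor$, and the Lloyd factorization. None of these rests on the bound being proved, so nothing is circular.

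As written, however, the decisive fact is asserted rather than established: you never show that no pair survives. Your fallback for survivors is also empty. The test $\Phi_2\mid\binom nw$ is already part of your first sieve. \cref{lem:residue-localization} with $e=2$ applies only for $P\in\{3,4,5\}$, where it yields $w\equiv2\pmod 3$, $w\equiv1,2\pmod4$ and $w\equiv0,1,2\pmod5$, all of which your residue filter already imposes. That the Lloyd sieve alone kills everything is likewise only an expectation.

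You can close this by hand, using only your localization sieve. The residue filter gives $w\equiv\bar\omega\equiv2\pmod{12}$ and $w,\bar\omega\equiv0,1,2\pmod5$. Hence $w\ge26$, and, as in the proof of \cref{thm:section5-two}, $6\mid\Phi_2$. So there are powers $2^u,3^v\in(\bar\omega,\bar\omega+d]$, and $|2^u-3^v|^2<d^2\le4w<4\min\{2^u,3^v\}$. For $n\le40000$, strict Roos gives $\bar\omega<3n/5\le24000$, so $5\le u\le14$ and $3\le v\le9$. Checking these $70$ pairs leaves only $(u,v)=(5,3)$ and $(8,5)$.

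The pair $(5,3)$ forces $w=\bar\omega=26$. Then $\Phi_2(52,26)=106302=2\cdot3\cdot7\cdot2531$, and $7$ has no power in $(26,36]$.

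The pair $(8,5)$ forces $225\le\bar\omega\le242$, since $d\le\lfloor2\sqrt{242}\rfloor=31$; so $\bar\omega\in\{230,242\}$. The only higher prime powers in $(\bar\omega,\bar\omega+31]$ are $3^5$ and $2^8$, so every prime divisor of $\Phi_2/6$ must itself lie in that interval. That means it lies in $\{233,239,241,251,257\}$, respectively $\{251,257,263,269,271\}$. But for $w\in(2\bar\omega/3,\bar\omega]$ one has $5\cdot10^7<\Phi_2/6<1.5\cdot10^8$. This exceeds every product of at most three of these primes (all below $2\cdot10^7$) and is below every product of four or five (all above $3\cdot10^9$).

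So your second sieve alone leaves no survivors, with no computer needed. This is just the paper's own $2$-versus-$3$ argument from \cref{thm:section5-two}, run over small $u$ by direct enumeration instead of through continued fractions.
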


\begin{theorem}[The radius $2$]\label{thm:section5-two}
There is no nontrivial $2$-perfect code in any Johnson graph.
\end{theorem}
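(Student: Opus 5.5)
The plan is to follow the same localization scheme as for $e=9,10,12,16$. The reciprocal-mass count alone is too weak at $e=2$, so the exponent comparison has to be sharpened with \cref{thm:quoted-legendre}.

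\emph{Step 1: inputs.} By \cref{cor:moving-prime-detector} (or \cref{thm:modulo-p-criterion} with $(p,P,\gamma)=(3,3,1)$), the prime $3$ divides $\Phi_2(n,w)$. Hence some $3^t$ lies in $(\bar\omega,\bar\omega+d]$. By \cref{prop:quoted-etzion-radius2} we may assume $n>40000$, so $\bar\omega\ge n/2>20000$. By \cref{cor:str_redund}, $d\le 2\sqrt w\le 2\sqrt{\bar\omega}$. The Lloyd factorization (\cref{lem:section5-Lloyd-factorization}) reads
\[
4\Phi_2=(w-h_1)(\bar\omega+h_1+1)(w-d+1)(\bar\omega+d),
\]
so $\Phi_2$ is a squarefree integer of size roughly $\bar\omega^4/4$. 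Every prime divisor $p$ of $\Phi_2$ has its power $p^{s_p}\in(\bar\omega,\bar\omega+d]$.

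\emph{Step 2: structure of the exponents.} By \cref{lem:section5-Lloyd-mass}, at most two exponents equal $1$. Next, two distinct primes cannot share an exponent $s\ge2$: consecutive $s$-th powers near $\bar\omega$ differ by at least $2\sqrt{\bar\omega}+1>d$. Applying \cref{lem:section5-general-cap} with $q=3$ bounds all other exponents by an explicit constant of size about $5600$. The bound $\log\Phi_2/\log M\le S$ then forces $S\ge 4-\varepsilon$ for a tiny $\varepsilon$, because $M<\bar\omega(1+2/\sqrt{\bar\omega})$ and $\Phi_2>\bar\omega^4/(4\cdot(1+o(1)))$. Consequently $\Phi_2$ needs many prime divisors with exponent $\ge2$, and in particular small primes $p$ with $p^{s}\approx 3^t$.

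\emph{Step 3: continued fractions.} For each such prime $p\ne3$ with exponent $s=s_p$, the localization gives
\[
0<|s\log p-t\log 3|<\frac{d}{\bar\omega}\le\frac{2}{\sqrt{\bar\omega}}.
\]
Since $s\le\log M/\log p$, this is far smaller than $\log 3/(2s^2)$ once $\bar\omega>20000$. Dividing by $s\log3$ and applying \cref{thm:quoted-legendre} shows that $t/s$ (after reduction) is a convergent of the irrational number $\log p/\log 3$. In fact the error is tiny relative to $s$, which forces $t/s$ to be followed by a very large partial quotient.

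I would then do the following.
\begin{enumerate}[(i)]
\item Take the smallest primes forced to occur (e.g.\ $p=2$, and $p=5$ or $7$ as required by the mass count of Step 2), and list the convergents of $\log p/\log3$ with denominator below the Laurent cap.
\item For each convergent $t/s$, check whether $|s\log p-t\log3|<2\cdot 3^{-t/2}$ holds.
\item Observe that only tiny pairs pass this test (e.g.\ $2^3$ vs $3^2$, $2^8$ vs $3^5$), and each of these forces $\bar\omega<3^t\le 20000$, contradicting $n>40000$.
\end{enumerate}
The remaining exponents must then all be large. That would drive $S$ below $4-\varepsilon$, giving the final contradiction with the lower mass bound.

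\emph{Main obstacle.} The hardest step is making Step 3 cover every prime that the mass count could use, not just one convenient $p$. The partial sums of $1/s$ only guarantee that some small-exponent primes exist, not which ones. I would first try to show that the lower bound $S\ge4-\varepsilon$ forces a prime with exponent $s\le 4$, hence $p\le M^{1/2}$ with $p^s$ within $2\sqrt{\bar\omega}$ of $3^t$. I would then bound $t$ directly by the Laurent cap, so that the continued-fraction check reduces to a finite verification over pairs $(s,t)$ rather than over primes $p$.
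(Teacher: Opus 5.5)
Your proposal has a genuine gap. The Laurent/Legendre machinery needs \emph{two fixed} small primes dividing $\Phi_2(n,w)$, and you only establish one, namely $3$.

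\begin{itemize}
\item The mass count cannot supply the second prime. At $e=2$ the lower bound $S\ge\log\Phi_2/\log M$ only tends to $4$. Meanwhile, two exponents equal to $1$ plus pairwise coprime exponents below about $7991$ allow $S$ up to roughly $2+\sum_{r\le 7991}1/r\approx4.46$. So there is no contradiction, and no specific prime is forced. In particular, ``$p=2$ \dots\ as required by the mass count'' is unjustified.
\item Your fallback, a prime $p$ with small exponent $s\le4$, cannot be handled either. For such $p$ one has $\ell(p)\approx\log\bar\omega/s$, so \cref{lem:section5-Laurent} (and hence \cref{lem:section5-general-cap}) becomes vacuous and gives no bound on $t$.
\item For $s=2$ and $t$ odd there is no Diophantine obstruction at all. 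The integer $m$ nearest $3^{t/2}$ always satisfies $|m^2-3^t|<3^{t/2}+1$, which is of the same order as the permitted gap $d\le2\sqrt{\bar\omega}$. No finite check over pairs $(s,t)$ can exclude this; only primality of $m$ could, and that lies outside the method.
\item Your closing step is inverted. Excluding a few small primes via continued fractions removes primes with \emph{large} exponents, which contribute little to $S$. The large primes with small exponents, which carry most of the mass, are untouched.
\end{itemize}

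The missing idea is that $2$ itself divides $\Phi_2$, which follows directly from \cref{prop:embedded-steiner}.
\begin{itemize}
\item \eqref{eq-congruence-e+2} with $e+2=4$ makes $w$ and $\bar\omega$ even.
\item \eqref{eq:embedded-steiner-shift} with $j=2$ gives $6\mid\binom{w-1}{2}$, so $4\mid(w-1)(w-2)$. Hence $w\equiv\bar\omega\equiv2\pmod 4$.
\item Then $\binom w2$ and $\binom{\bar\omega}2$ are odd, so $\Phi_2=1+w\bar\omega+\binom w2\binom{\bar\omega}2\equiv0\pmod2$.
\end{itemize}

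With $2^u$ and $3^v$ both localized, your Step~3 for $p=2$ alone finishes the proof, and the mass count becomes unnecessary.
\begin{itemize}
\item \cref{lem:section5-general-cap} with $q=3$ gives $u<7991$.
\item Legendre's criterion makes $v/u$ a convergent of $\log2/\log3$ with denominator below $7991$.
\item For those finitely many convergents and $u\ge15$, one checks directly that $|2^u-3^v|^2>4\min\{2^u,3^v\}$. This contradicts $|2^u-3^v|^2\le d^2\le4w\le4\bar\omega<4\min\{2^u,3^v\}$.
\end{itemize}
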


\begin{proof}
For $e=2$, \eqref{eq-congruence-e+2} implies that $w$ and ${\bar\omega}$ are both even, and \eqref{eq:embedded-steiner-shift} with $j=2$ implies that  $4\mid(w-1)(w-2)$ and $4\mid({\bar\omega}-1)({\bar\omega}-2)$. Hence, $w\equiv {\bar\omega}\equiv2\pmod4$.
So \[
\Phi_2=1+w{\bar\omega}+\binom w2\binom {\bar\omega}2\equiv1+0+1\equiv0\pmod2.
\]
\cref{thm:modulo-p-criterion} with $(p,P,\gamma)=(3,3,1)$ also gives $3\mid\Phi_2$. Hence $2,3\mid\Phi_2$.  Choose localized powers
\[
{\bar\omega}<2^u,3^v\le {\bar\omega}+d.
\]
By  \cref{prop:quoted-etzion-radius2}, no $2$-perfect code exists for $n\le40000$; hence we assume ${\bar\omega}\ge n/2>20000$, so $u\ge15$.  

First we show an upper bound of the exponent $u$ of $2$. In \cref{lem:section5-general-cap}, $\eta_2=\log(1+2/\sqrt {\bar\omega})<\log(1+2/\sqrt{20000})< 0.015$. Then
\[\frac{5\log e+3\eta_e}{\log 3}\leq \frac{5\log 2+3 \times 0.015}{\log 3}<4.\]
By \cref{lem:section5-general-cap}, we have
\begin{equation}\label{eq:section5-u-basic}
u<5040\frac{\log 3}{\log 2}+\frac{2\log 2+0.015}{\log 2}<7991.
\end{equation}


We next apply the continued fraction argument to give possible values of $v/u$.  Since $2^u\le M<2{\bar\omega}$, we have ${\bar\omega}>2^{u-1}$ and hence by $|\Lambda|<2/\sqrt {\bar\omega}$ from \eqref{eq:section5-Lambda-upper},
\begin{equation}\label{eq:section5-Legendre-two}
\left|\frac{\log2}{\log3}-\frac vu\right|= \frac{|\Lambda|}{u\log3}<\frac{2}{u\log3\cdot\sqrt {\bar\omega}} <\frac{2\sqrt2}{u\log3\cdot2^{u/2}}.
\end{equation}
Since $u\ge 15$ and the function $2^{u/2}/u$ is increasing for $u\ge3$, we have $4\sqrt2u<\log 3\cdot 2^{u/2}$. Hence
\begin{equation}\label{eq:section5-Legendre-two}
\left|\frac{\log2}{\log3}-\frac vu\right|<\frac{1}{2u^2}.
\end{equation}
Applying Legendre's criterion  in \cref{thm:quoted-legendre} with $x=\frac{\log2}{\log3}$ and $a/b$ being the reduced fraction associated with $v/u$,
we get that  $a/b$ is a convergent of $\log2/\log 3$.
The required continued-fraction prefix is
\[
\frac{\log2}{\log3}=[0;1,1,1,2,2,3,1,5,2,23,\ldots].
\]
The positive convergents whose denominators lie below $7991$ are in
\[
D=\{1/1,\ 1/2,\ 2/3,\ 5/8,\ 12/19,\ 41/65,\ 53/84,\ 306/485,\ 665/1054\},
\]
and the next denominator is $24727$. That is, $v/u$ can only equal to someone in $D$.

Finally, we show that $v/u$ can not equal any one in $D$. Observe that
 \[R_{u,v}:=\frac{|2^u-3^v|^2}{4\min\{2^u,3^v\}}\leq \frac{d^2}{4\bar{\omega}}\leq \frac{4w}{4\bar{\omega}}\leq 1.\]
If $v/u=a/b$ for some displayed convergent $a/b$ in $D$, then $(v,u)=(ag,bg)$ with $g\geq 1$. If $2^b<3^a$, define
\[
\rho_{u,v}(y):=\frac{(3^{ay}-2^{by})^2}{4\cdot 2^{by}}=\frac{2^{by}}{4}\left(\left(\frac {3^a} {2^b}\right)^y-1\right)^2;
\]
 if $2^b>3^a$, just interchange $2^b$ and $3^a$. Note that $\rho_{u,v}(g)=R_{u,v} \leq 1$.
Since $u\geq 15$, and $\rho_{u,v}(y)$ is strictly increasing in $y\ge1$, it suffices to check for any $a/b\in D$, the values of $\rho_{u,v}(y)$ for the least integer $y$ such that $b y\ge15$. These corresponding pairs $(a y,b y)$ for each $a/b\in D$ are
\[
(15,15),(8, 16),(10 ,15),(10 ,16),(12 ,19),(41 ,65),(53 ,84),(306 ,485),(665 ,1054).
\]
Direct substitution gives $\rho_{u,v}>1$ in every case (in fact much bigger than 1).  This contradiction excludes $e=2$.
\end{proof}

The same reciprocal-mass proof now handles all four remaining small radii.  Once the quartic weight bound is available, the basic estimate $d\le e\sqrt {\bar\omega}$ is already sufficient for the proof.

\begin{proposition}[Small radii by reciprocal mass]\label{prop:section5-small-unified}
There is no nontrivial $e$-perfect code in any Johnson graph for
\[
e\in\{9,10,12,16\}.
\]
\end{proposition}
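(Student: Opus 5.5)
The plan is to compare the two sides of the reciprocal-mass inequality \eqref{eq:section5-mass-bounds} of \cref{lem:section5-Lloyd-mass} and show that, once $w>e^4$ (\cref{prop:section5-weight-amplification}), the upper bound on $S$ is smaller than the lower bound $\log\Phi_e(n,w)/\log M$.

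\emph{Lower bound for $S$.} Since all terms of $\Phi_e$ are positive, $\Phi_e(n,w)\ge\binom we\binom{\bar\omega}e$. Using $w>e\bar\omega/(e+1)$ from \eqref{eq:section5-w-over-z} and $w>e^4$, I will show
\[
\log\Phi_e(n,w)\ge 2e\log\bar\omega-2\log e!-O(e^2/\bar\omega)-e\log\tfrac{e+1}{e}.
\]
For the denominator, $\log M\le\log\bar\omega+\eta_e$ with $\eta_e<\log(1+1/e)$. Since $\log\bar\omega>4\log e$, this gives roughly
\[
S\ \ge\ 2e-\frac{2\log e!+c_e}{4\log e},
\]
with a small explicit correction $c_e$. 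Numerically this is about $15$ for $e=9$ and about $26.4$ for $e=16$, with the other two radii in between.

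\emph{Upper bound for $S$.} I split the prime divisors of $\Phi_e$ by the size of the localized exponent $s_p$.
\begin{itemize}
\item Exponent $1$: by \cref{lem:section5-Lloyd-mass} there are at most $e$ such primes, so they contribute at most $e$.
\item Exponent $2$: the squares $p^2$ lie in $(\bar\omega,\bar\omega+d]$, and consecutive squares there are more than $2\sqrt{\bar\omega}$ apart. Since $d\le e\sqrt{\bar\omega}$, there are at most $e/2+1$ such primes, contributing at most $(e+2)/4$.
\item The prime $2$ with $s_2\ge3$: this contributes at most $1/3$ and is kept separate.
\item Odd primes with $s_p\ge3$: take distinct such $p,q$ and suppose $g=\gcd(s_p,s_q)\ge3$. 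Then $p^{s_p}$ and $q^{s_q}$ are two distinct perfect $g$-th powers in $(\bar\omega,\bar\omega+d]$. Consecutive $g$-th powers near $\bar\omega$ differ by at least $g\bar\omega^{1-1/g}\ge3\bar\omega^{2/3}$. This exceeds $e\sqrt{\bar\omega}\ge d$ because $\bar\omega^{1/6}>e/3$, which follows from $\bar\omega>e^4$ and $e<27$. So this case is impossible, and $\gcd(s_p,s_q)\le2$.
\end{itemize}
For the last class, \cref{lem:section5-moving-cap} gives $s_p<15000$. Then \cref{lem:section5-exponent-packing} and \eqref{eq:section5-Dusart-simple} bound its contribution by
\[
\tfrac14+\sum_{3\le r\le15000}\tfrac1r<\tfrac14+\log\log15000+0.278-\tfrac12<2.3.
\]
Altogether,
\[
S\le e+\frac{e+2}{4}+\frac13+2.3 .
\]
This is about $14.4$ for $e=9$ and $23.1$ for $e=16$.

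\emph{Conclusion and main obstacle.} Comparing the two bounds for each $e\in\{9,10,12,16\}$ gives a contradiction. The step I expect to be tightest is $e=9$ (and possibly $e=10$), where the margin is under one unit. There the lower bound must be computed carefully: use the exact $\log 9!$ and keep the $\eta_e$ and $(e+1)/e$ corrections explicit. If the crude count of exponent-$2$ primes is too generous, I would sharpen it with the actual spacing $(\sqrt{\bar\omega}+k)^2$, or by noting that an exponent-$2$ prime and an even exponent $\ge4$ cannot share the interval. If the margin is still insufficient for $e=9$, I would fall back on pushing the weight bound beyond $e^4$ using the detected primes $2,5,11$ from \cref{lem:section5-comparison-primes}, since a larger $\log\bar\omega$ moves the lower bound towards $2e$.
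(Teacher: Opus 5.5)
Your proposal is correct and is essentially the paper's proof. It uses the same comparison of $\log\Phi_e/\log M$, evaluated at $\bar\omega=e^4$ via \cref{prop:section5-weight-amplification}, against an upper bound for $S$ obtained by splitting off exponents $1$, $2$, the prime $2$, and odd exponents $\ge3$ with $\gcd\le2$, which are handled by \cref{lem:section5-exponent-packing}, the cap $15000$ and \eqref{eq:section5-Dusart-simple}. Your lower bound via the top term $\binom we\binom{\bar\omega}e$ is slightly sharper than the paper's Lloyd-factorization bound and offsets your cruder spacing and prime-$2$ estimates: computed carefully it gives, e.g., $S>14.79$ against $S<14.39$ for $e=9$, so none of your fallbacks is needed.
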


\begin{proof}
By \cref{prop:section5-weight-amplification,eq:section5-w-over-z,eq:section5-input1},
\begin{equation}\label{eq:section5-tail-weight}
{\bar\omega}\ge w>e^4,\qquad \frac w{\bar\omega}>\frac e{e+1}, \qquad d\le e\sqrt {\bar\omega}.
\end{equation}
Put $c_e=\frac e{e+1}-\frac 1e$.
Then $d/{\bar\omega}<1/e$, $M={\bar\omega}+d<{\bar\omega}(1+1/e)$, and $w-h_i>w-d>c_e{\bar\omega}$ with $h_i$ from \cref{lem:section5-Lloyd-factorization}. Lloyd factorization of \cref{lem:section5-Lloyd-factorization} gives $\Phi_e>(c_e{\bar\omega}^2)^e/(e!)^2$, and thus
\[
\log\Phi_e>2e\log {\bar\omega}+e\log c_e-2\log(e!).
\]

Using \cref{lem:section5-Lloyd-mass} with $\log M<\log {\bar\omega}+\log(1+1/e)$ and subtracting the possible contribution $e$ of exponent $1$,
\begin{equation}\label{eq:section5-small-lower-unified}
S-e\ge \frac{\log\Phi_e}{\log M}-e> \frac{e\log {\bar\omega}+e\log c_e-2\log(e!)-e\log(1+1/e)}{\log {\bar\omega}+\log(1+1/e)}.
\end{equation}
The right side increases with $\log {\bar\omega}$, because its derivative has numerator $2e\log(1+1/e)-e\log c_e+2\log(e!)>0$.  We may therefore put ${\bar\omega}=e^4$ on its right-hand side.

To obtain the upper side of $S-e$, we first bound the remaining exponent classes.  If $p,q$ are distinct odd prime divisors of $\Phi_e$ and $g=\gcd(s_p,s_q)\ge3$, write $p^{s_p}=A^g$ and $q^{s_q}=B^g$.  Since $A,B$ are distinct odd integers, $|A-B|\ge2$, and the mean-value theorem gives
\[
|A^g-B^g|>2g {\bar\omega}^{1-1/g}\ge6{\bar\omega}^{2/3}.
\]
For each $e\in\{9,10,12,16\}$, one has $6e^{2/3}>e$, so ${\bar\omega}>e^4$ implies $6{\bar\omega}^{2/3}>6e^{2/3}\sqrt {\bar\omega}>e\sqrt {\bar\omega}\ge d$, contradicting the localization. Thus
\begin{equation}\label{eq:section5-small-gcd2}
\gcd(s_p,s_q)\le2
\end{equation}
for distinct odd prime divisors.

Let $N_2$ be the number of odd prime divisors with exponent $2$. If their bases are $p_1<\cdots<p_{N_2}$, then the mean-value theorem gives
\[
p_{j+1}^2-p_j^2>4p_j>4\sqrt {\bar\omega}.
\]
Summing over all $p_{j+1}^2-p_j^2$ in the localization interval gives $4(N_2-1)\sqrt {\bar\omega}<d\le e\sqrt {\bar\omega}$, so the exponent-$2$ contribution $N_2/2$ is less than $1/2+e/8$.

For odd-prime exponents $s_p\ge3$, \cref{lem:section5-moving-cap} gives $s_p<15000$.  By \eqref{eq:section5-small-gcd2}, \cref{lem:section5-exponent-packing,eq:section5-Dusart-simple} apply.  The primes chosen in the proof of \cref{lem:section5-exponent-packing} are odd, so the term $1/2$ belonging to the prime $2$ may be removed from \eqref{eq:section5-Dusart-simple}. Consequently
\[
\sum_{\substack{p\mid\Phi_e\\p\text{ odd},\ s_p\ge3}}\frac1{s_p}<\frac14+\left(\log\log15000+0.278-\frac12\right)=\log\log15000+0.028<2.292.
\]
If $2\mid \Phi_e$, its contribution by $2^{s_2}>{\bar\omega}$ is $1/s_2<\log2/\log {\bar\omega}<\log2/(4\log e)$. Hence
\begin{equation}\label{eq:section5-small-upper-unified}
S-e<\frac12+\frac e8+2.292+\frac{\log2}{4\log e}.
\end{equation}
Evaluating \eqref{eq:section5-small-lower-unified} at ${\bar\omega}=e^4$ gives
\[
\begin{array}{c|c|c}
e&\text{lower bound for }S-e&\text{upper bound from \eqref{eq:section5-small-upper-unified}}\\ \hline
9&>5.66&<4.00\\
10&>6.32&<4.12\\
12&>7.60&<4.37\\
16&>10.13&<4.86.
\end{array}
\]
Every row is contradictory.
\end{proof}

\section{No 4-perfect codes}\label{sec:no-four-perfect-codes}

We now exclude radius $4$, using the general reciprocal-mass and exponent-packing lemmas of Sections~2 and~3. Suppose that a nontrivial $4$-perfect code exists in $J(2w+\delta,w)$, where $\delta\ge0$, and retain the notation
\[
{\bar\omega}=w+\delta,\qquad M={\bar\omega}+d,\qquad \Phi_4=\Phi_4(2w+\delta,w),\qquad S=\sum_{p\mid\Phi_4}\frac1{s_p}.
\]
By \cref{prop:lower bound of w,prop:strict-roos,cor:str_redund,cor:prime factors from chain divisitilities},
\begin{equation}\label{eq:section6-input}
w\ge50,\qquad {\bar\omega}<\frac54w,\qquad d\le4\sqrt w\le4\sqrt {\bar\omega},\qquad {\bar\omega}<p^{s_p}\le M\quad(p\mid\Phi_4),
\end{equation}
and $\Phi_4$ is squarefree. Moreover, \cref{thm:modulo-p-criterion} with $(p,P,\gamma)=(5,5,1)$ gives $5\mid\Phi_4$. In particular, a power of $5$ is localized in $({\bar\omega},M]$. The mass bounds in \cref{lem:section5-Lloyd-mass} apply, and at most four localized exponents equal $1$.

\begin{lemma}[Preliminary weight bound]\label{lem:section6-weight}
A hypothetical nontrivial $4$-perfect code satisfies $w>10^4$.
\end{lemma}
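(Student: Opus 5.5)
We argue by contradiction, in the same way as the proof of \cref{prop:section5-weight-amplification}: assume $50\le w\le 10^4$ and use the fact that a power of $5$, together with the other forced primes, must land in the very short window $({\bar\omega},{\bar\omega}+d]$.

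\emph{Step 1: fix the window.} From \eqref{eq:section6-input} we get $50\le w\le {\bar\omega}<\tfrac54 w\le 12500$ and $d\le 4\sqrt{\bar\omega}<448$, so the localization interval has length below about $4\%$ of ${\bar\omega}$ once ${\bar\omega}$ is moderately large.

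\emph{Step 2: collect forced primes.} Besides $5$, \cref{cor:moving-prime-detector} applies to primes in $(4,6]$, so it gives only $5$. I would therefore run through \cref{thm:modulo-p-criterion} with prime powers $P$ satisfying $4\equiv-\gamma\pmod P$ and $\gamma\le\lfloor (P+2)/4\rfloor$:
\begin{itemize}
\item $P=7$, $\gamma=3$ fails, since $\lfloor 9/4\rfloor=2$;
\item $P=8$, $\gamma=4$ fails;
\item $P=16$, $\gamma=12$ fails.
\end{itemize}
So $5$ is probably the only cheaply forced prime. I would supplement it with residue information:
\begin{itemize}
\item \cref{lem:residue-localization} with $P=5,\gamma=1$ gives $w\equiv{\bar\omega}\equiv4\pmod5$;
\item \cref{lem:residue-localization} with $P=25,\gamma=21$ gives nothing useful;
\item \eqref{eq-congruence-e+2} gives $w\equiv{\bar\omega}\equiv 1\pmod 3$;
\item \eqref{eq:embedded-steiner-shift} with $j=1,\dots,5$ gives congruences such as $5\mid w-4$, $\binom{6}{2}=15\mid\binom{w-3}{2}$, and $\binom{8}{4}=70\mid\binom{w-1}{4}$.
\end{itemize}

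\emph{Step 3: case analysis on the power of $5$.} The powers of $5$ that can lie in $({\bar\omega},M]$ with ${\bar\omega}\in[50,12500)$ are $5^3=125$, $5^4=625$, $5^5=3125$ and $5^6=15625$. The last requires ${\bar\omega}>15625-448$, which contradicts ${\bar\omega}<12500$. So ${\bar\omega}\in(125-d,125)$, $(625-d,625)$ or $(3125-d,3125)$. This leaves three short ranges, each of length at most $4\sqrt{\bar\omega}$, i.e.\ at most about $45,100,224$.

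Within each range I would intersect three conditions: the congruences on $w$ and ${\bar\omega}$ from Step 2, the constraints $w\le{\bar\omega}<\tfrac54w$ and $d\le 4\sqrt w$ (where $d\ge M-{\bar\omega}$ must reach the power of $5$, so $w\ge 5^s-4\sqrt w-\delta$), and the divisibility $\Phi_4(n,w)\mid\binom nw$. For the surviving pairs $(w,\delta)$, which should be few, I would check directly that $\Phi_4$ is not squarefree or has a prime factor $p$ with no power in $({\bar\omega},{\bar\omega}+d]$, contradicting \cref{cor:prime factors from chain divisitilities}.

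\emph{Step 4: Lloyd factorization.} Alternatively, \cref{lem:section5-Lloyd-factorization} with $h_4=d-1$ forces $(4!)^2\Phi_4$ to split as a product of the linear factors $(w-h_i)({\bar\omega}+h_i+1)$. I would use this only as a secondary filter on the survivors.

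The main obstacle is Step 3. The residue conditions alone may not kill every candidate, so at least a small finite computation is needed: for each candidate $(w,{\bar\omega})$, factor $\Phi_4$ and test its prime factors against the localization. If this is too heavy, I would first try to force a second prime, such as $2$ or $3$, via a parity computation of $\Phi_4$ modulo small primes using the congruences above, as in the $e=2$ proof. Two localized powers differing by at most $4\sqrt{\bar\omega}$ in these ranges would then give an immediate contradiction.
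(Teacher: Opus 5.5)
Your Steps 1--3 are correct as far as they go: $5$ is the only prime the detectors force, and its localized power pins ${\bar\omega}$ just below $125$, $625$ or $3125$. The proposal, however, never derives a contradiction. The decisive step is an unexecuted search whose survivors ``should be few'' and which you would ``check directly''. You give no argument that every candidate fails, so this is a plan rather than a proof.

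Your fallback cannot close the gap. The case $j=5$ of \eqref{eq:embedded-steiner-shift} gives $126\mid\binom{w}{5}$. Together with $w\equiv1\pmod 3$, this forces $w\equiv1$ or $4\pmod 9$, and the same holds for ${\bar\omega}$. Lucas' theorem then gives $\Phi_4\equiv2(1+w_1{\bar\omega}_1)\pmod 3$, where $w_1,{\bar\omega}_1\in\{0,1\}$ are the second ternary digits. Hence $3\nmid\Phi_4$ always. Nor is $2$ forced: $w\equiv1$, ${\bar\omega}\equiv2\pmod 8$ satisfies every congruence you list but makes $\Phi_4$ odd. Even granting $2\mid\Phi_4$, the powers $5^3=125$ and $2^7=128$ can lie in the same window, so the smallest range would not be immediate either.

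The missing idea is the reciprocal-mass comparison, which makes enumeration unnecessary. Since $\Phi_4$ is squarefree and every prime factor satisfies ${\bar\omega}<p^{s_p}\le M$, one has $S=\sum_p 1/s_p\ge\log\Phi_4/\log M$. Using $\Phi_4>\binom{w}{4}\binom{\bar\omega}{4}>(w-2)^8/576$ and $M<\frac{5}{4}w+4\sqrt w$, this lower bound is about $8$ minus a term of order $1/\log w$. Explicitly, $S>5.45$ for $w\ge 50$ and $S>6.37$ for $w>256$.

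On the other side, at most four exponents equal $1$ by \cref{lem:section5-Lloyd-mass}. Each exponent $s\ge2$ belongs to at most one odd prime, because $q^s-p^s>2s{\bar\omega}^{1-1/s}\ge4\sqrt{\bar\omega}\ge d$. The smallness of $M$ then caps the remaining exponents:
\begin{itemize}
\item For $w\le256$, your own observation that the localized power of $5$ is $5^3$ gives $M<170<7^3$. Hence $S\le4+\frac12+\frac16+\frac14+\frac13=5.25$.
\item For $256<w\le10^4$, $M<12900<7^5$ caps every prime $p\ge7$ at exponent $4$. With $s_2\ge9$, $s_3\ge6$, $s_5\ge4$, this gives $S\le\frac{101}{18}<5.62$.
\end{itemize}
Both upper bounds contradict the lower bounds. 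This is the paper's argument, and it needs neither residue information nor any factorization.
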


\begin{proof}
First we have $w\ge 50$ from \eqref{eq:section6-input}. The last summand of $\Phi_4$ and \eqref{eq:section6-input} give
\[
\Phi_4\ge\binom w4\binom {\bar\omega}4>\frac{(w-2)^8}{576},\qquad M={\bar\omega}+d<\frac54w+4\sqrt w.
\]
For any number $W$ with $50\le W\le w$, taking logarithm gives
\[
\log\Phi_4>8\log w+8\log(1-2/W)-\log576,\qquad \log M< \log w+\log(5/4+4/\sqrt W).
\]
Consequently, \cref{lem:section5-Lloyd-mass} implies
\[
S\ge \frac{\log\Phi_4}{\log M}> \frac{8\log w+8\log(1-2/W)-\log576}{\log w+\log(5/4+4/\sqrt W)}.
\]
The right side of the inequality increases with $\log w$. Hence
\begin{equation}\label{eq:section6-R4}
S>R_4(W):=\frac{8\log W+8\log(1-2/W)-\log576}{\log W+\log(5/4+4/\sqrt W)}.
\end{equation}
Now consider the localized exponents of primes dividing $\Phi_4$.
For each fixed $s\ge2$, at most one odd prime can have exponent $s$: if $p<q$ are odd primes whose $s$th powers are both localized, then
\[
q^s-p^s>2s {\bar\omega}^{1-1/s}\ge4\sqrt {\bar\omega}\ge d,
\]
contradicting the localization. We use this observation in the following two ranges.

Suppose first that $50\le w\le256$. Then $M<\frac54w+4\sqrt w\le 384<5^4$, whereas ${\bar\omega}\ge50>5^2$, so the localized power of $5$ must be $5^3$. Thus the localization gives ${\bar\omega}<5^3=125$, and thus $M={\bar\omega}+d<125+4\sqrt{125}<170<7^3$. Every prime $p\ge7$ therefore has $s_p\le2$. Also $s_2\ge6$ and $s_3\ge4$ from $2^{s_2},3^{s_3}>{\bar\omega}\ge 50$ whenever these primes divide $\Phi_4$. Hence
\[
S\le4+\frac12+\frac16+\frac14+\frac13=5.25<5.4593<R_4(50),
\]
contradicting \eqref{eq:section6-R4}.

Next suppose that $256<w\le10^4$. Now $M<\frac54w+4\sqrt w\le12900<7^5$, so every prime $p\ge7$ has $s_p\le4$. Since ${\bar\omega}\ge w>256$, the primes $2,3,5$, when present, satisfy $s_2\ge9$, $s_3\ge6$, and $s_5\ge4$. Therefore
\[
S\le4+\frac12+\frac13+\frac14+\frac19+\frac16+\frac14=\frac{101}{18}<5.612<6.3762<R_4(256),
\]
again a contradiction. This proves $w>10^4$.
\end{proof}

\begin{lemma}[Radius-four exponent cap]\label{lem:section6-exponent-cap}
For every prime $p\ne2,5$ dividing $\Phi_4$, one has $s_p<8115$.
\end{lemma}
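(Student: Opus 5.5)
The plan is to apply \cref{lem:section5-general-cap} with $e=4$ and the detected odd prime $q=5$, which is the only odd prime we know a priori divides $\Phi_4$. Since $p\ne 5$ and we also exclude $p=2$, the prime $p$ is odd and distinct from $q$. The lemma then gives
\[
s_p<5040\log 5+\frac{2\log 4+\eta_4}{\log p}.
\]

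The first step is to check the hypothesis of \cref{lem:section5-general-cap}, namely $5\log e+3\eta_e\le 100\log q$. Here $\eta_4=\log(1+4/\sqrt{\bar\omega})$. By \cref{lem:section6-weight} we have ${\bar\omega}\ge w>10^4$, so $\eta_4<\log(1.04)<0.04$. Then
\[
5\log4+3\eta_4<7<100\log5,
\]
and the hypothesis holds comfortably. If one prefers to use \eqref{eq:section6-input} alone, $w\ge50$ still gives $\eta_4<\log(1+4/\sqrt{50})<0.45$, which is also more than enough.

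The second step is the numerics. We have $5040\log5\approx 8111.54$. The correction term is largest when $p$ is smallest. Since $p\neq 2,5$ and $p$ is odd, the smallest candidate is $p=3$, which gives
\[
\frac{2\log4+\eta_4}{\log 3}<\frac{2.773+0.04}{1.0986}<2.57.
\]
The total is therefore less than $8111.6+2.6<8115$. For $p\ge7$ the correction is even smaller, so the bound $s_p<8115$ holds uniformly.

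I do not expect a real obstacle. The one point to get right is the exclusion of $p=2$: for $p=2$ the lemma gives the weaker bound $5040\log5/\log2\approx 11702$, which is exactly why the statement omits $p=2$ along with $q=5$ itself. The only thing to double-check is that $8111.54$ plus the small correction stays below $8115$ with the crude bound on $\eta_4$, and the estimates above show it does.
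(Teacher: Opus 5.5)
Your proposal is correct and matches the paper's proof: apply \cref{lem:section5-general-cap} with $q=5$, use $\bar\omega>10^4$ from \cref{lem:section6-weight} to get $\eta_4<\log(26/25)$, and take $p=3$ as the worst case, which gives $5040\log5+(2\log4+\log(26/25))/\log3<8115$. One small numerical slip: $5040\log5\approx 8111.57$, not $8111.54$, but the total is still about $8114.1<8115$, so the conclusion stands.
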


\begin{proof}
Put $u=s_p$, and write $5^v$ for the localized power of $5$.
Since by \cref{lem:section6-weight}, ${\bar\omega}>10^4$,   $\eta_4=\log(1+4/\sqrt {\bar\omega})<\log(26/25)$. Then
\[\frac{5\log e+3\eta_e}{\log 5}\leq \frac{5\log 4+3 \log(26/25)}{\log 5}<5.\]
By \cref{lem:section5-general-cap}, we have
\begin{equation}\label{eq:section5-u-basic}
u<5040\log q+\frac{2\log e+\eta_e}{\log p}<5040\log 5+\frac{2\log 4+\log(26/25)}{\log 3}<8115.
\end{equation}

%
\end{proof}

\begin{theorem}[Nonexistence of radius-four perfect codes]\label{thm:no-four-perfect-codes}
There are no nontrivial $4$-perfect codes in any Johnson graph.
\end{theorem}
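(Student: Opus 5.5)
The plan is to run the reciprocal-mass argument of \cref{prop:section5-small-unified} with $e=4$. We compare a Lloyd-type lower bound for $S-4$ with an upper bound obtained from exponent packing. \cref{lem:section6-weight} supplies ${\bar\omega}\ge w>10^4$, and \cref{lem:section6-exponent-cap} supplies the exponent cap.

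\emph{Lower bound.} By \eqref{eq:section6-input} we have $d\le4\sqrt{{\bar\omega}}<{\bar\omega}/25$ and $w>\frac45{\bar\omega}$. Put $c=\frac45-\frac1{25}=0.76$. Then $w-h_i>w-d>c\,{\bar\omega}$ and $n-w+h_i+1>{\bar\omega}$. \cref{lem:section5-Lloyd-factorization} therefore gives $\log\Phi_4>8\log{\bar\omega}+4\log c-2\log 24$. We also have $\log M<\log{\bar\omega}+\log(1+1/25)$. \cref{lem:section5-Lloyd-mass}, with at most four exponents equal to $1$, then yields
\[
S-4>\frac{4\log{\bar\omega}+4\log c-2\log24-4\log(26/25)}{\log{\bar\omega}+\log(26/25)}.
\]
The right-hand side increases in $\log{\bar\omega}$, as in the earlier proof. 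We therefore evaluate it at ${\bar\omega}=10^4$, which gives roughly $S-4>3.1$.

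\emph{Upper bound.} Three facts control the remaining exponents.
\begin{enumerate}[(i)]
\item Let $p\ne q$ be odd prime divisors and put $g=\gcd(s_p,s_q)\ge3$. Write $p^{s_p}=A^g$ and $q^{s_q}=B^g$. The mean-value argument gives $|A^g-B^g|>6{\bar\omega}^{2/3}>4\sqrt{{\bar\omega}}\ge d$, which is impossible. Hence $\gcd(s_p,s_q)\le2$ for distinct odd prime divisors.
\item Consider odd primes with exponent $2$. Their squares are spaced more than $4\sqrt{{\bar\omega}}$ apart, so $4(N_2-1)\sqrt{{\bar\omega}}<d\le4\sqrt{{\bar\omega}}$. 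Thus $N_2\le1$, and these primes contribute at most $1/2$.
\item Consider odd primes $p\ne5$ with $s_p\ge3$. They satisfy $s_p<8115$ by \cref{lem:section6-exponent-cap}. \cref{lem:section5-exponent-packing} together with \eqref{eq:section5-Dusart-simple}, after dropping the prime $2$, gives a contribution of at most $\frac14+\log\log8115+0.278-\frac12<2.23$.
\end{enumerate}
The prime $5$ is treated separately, since it is excluded from the cap. Because $5^{s_5}>10^4$, we get $s_5\ge6$ and a contribution of at most $1/6$. Similarly $2^{s_2}>10^4$ forces $s_2\ge14$, so the prime $2$ contributes at most $1/14$. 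Summing,
\[
S-4<\tfrac12+2.23+\tfrac16+\tfrac1{14}<2.97.
\]
This contradicts the lower bound.

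\emph{Main obstacle.} The margin between the two bounds is thin, roughly $3.1$ against $2.97$. The hardest step is therefore the numerical accuracy of the lower bound at ${\bar\omega}=10^4$. If the evaluation falls short, I would try two improvements. The first is to sharpen $c$ by using $d\le4\sqrt w$ directly rather than $d<{\bar\omega}/25$. The second is to raise the preliminary weight bound in \cref{lem:section6-weight}, say to $w>10^5$, by the same case analysis on localized powers of $5$ and $7$; this increases the lower bound toward its limit $4$ while leaving the upper bound essentially unchanged.
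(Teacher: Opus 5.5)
Your proof is correct. At ${\bar\omega}=10^4$ your lower bound evaluates to $S-4>3.160$, and your upper bound is $S-4<\tfrac12+2.2254+\tfrac16+\tfrac1{14}<2.964$. The contradiction therefore holds with a margin of about $0.19$, and neither fallback is needed.

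The overall strategy matches the paper's: a Lloyd-mass lower bound for $S$ is played against an upper bound built from exponent caps and a Mertens-type estimate. Two ingredients differ.

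\begin{itemize}
\item \emph{Lower bound.} The paper does not invoke the Lloyd factorization. It reuses $S>R_4(10^4)>7.1130$ from the proof of \cref{lem:section6-weight}, which rests on the single summand $\binom w4\binom{\bar\omega}4$. Your factorization bound, equivalent to $S>7.160$, is slightly stronger.
\item \emph{Upper bound.} The paper notes that the mean-value estimate $|A^g-B^g|>g{\bar\omega}^{1-1/g}|A-B|\ge4\sqrt{\bar\omega}\ge d$ already works for $g=2$, not only for $g\ge3$. Hence the exponents $s_p$ of odd primes $p\ne5$ with $s_p\ge2$ are pairwise coprime. Choosing a prime $r_p\mid s_p$ then bounds their total contribution directly by $\sum_{r<8115,\ r\text{ prime}}1/r$.
\end{itemize}

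That coprimality observation makes your separate count of exponent-$2$ primes (item (ii)) unnecessary. It also removes the extra $\tfrac14$ coming from \cref{lem:section5-exponent-packing}. The paper's total is $S<6.7254$, a margin of almost $0.4$, roughly twice yours.
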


\begin{proof}
By \cref{lem:section6-weight} and \eqref{eq:section6-R4},
\begin{equation}\label{eq:section6-mass-lower}
S>R_4(10^4)>7.1130.
\end{equation}
We derive an incompatible upper bound. If distinct odd primes $p,q\ne5$ have $g=\gcd(s_p,s_q)\ge2$, write $p^{s_p}=A^g$ and $q^{s_q}=B^g$. Since $A,B$ are distinct integers with $|A-B|\ge2$ and $\min\{A,B\}>{\bar\omega}^{1/g}$,
\[
|A^g-B^g|> g{\bar\omega}^{1-1/g}|A-B|\ge 4\sqrt {\bar\omega}\ge d.
\]
This contradicts localization. Thus $\gcd(s_p,s_q)=1$, and in particular no exponent $s\ge2$ repeats among the prime divisors other than $2,5$.
For each $p\ne2,5$ with exponent $s_p\ge 2$, take a prime $r_p$ dividing $s_p$, then $1/s_p\le 1/r_p$ and all such primes $r_p$ are distinct as $\gcd(s_p,s_q)\le1$. Also since \cref{lem:section6-exponent-cap} gives $s_p<8115$ for $p\ne2,5$, applying \eqref{eq:section5-Dusart-simple} yields
\[
\sum_{\substack{p\mid\Phi_4\\p\ne2,5,\ s_p\ge2}}\frac1{s_p}\le\sum_{\substack{q<8115\\q\text{ prime}}}\frac1q<\log\log8115+0.278.
\]
Exponent $1$ contributes at most $4$ by \cref{lem:section5-Lloyd-mass}. $2^{s_2},5^{s_5}>{\bar\omega}>10^4$ gives $1/s_2<\log2/\log(10^4),\allowbreak 1/s_5<\log5/\log(10^4)$.
Summing the contributions,
\[
S<4+\log\log8115+0.278+\log2/\log(10^4)+\frac{\log5}{\log(10^4)}<6.7254,
\]
contradicting \eqref{eq:section6-mass-lower}.
\end{proof}


\section{No \texorpdfstring{$1$}{1}-perfect codes}\label{sec:no-one-perfect-codes}

We now treat radius $1$. The proof has three stages. First, we consider separately the reciprocal sums associated with the two factors of the sphere size, and use their denominators to prove that the larger factor is prime. Next, the remaining bounded localized exponents exclude the large-weight range. Finally, an exact computation excludes the bounded range.

\subsection{Radius-one identities and preliminary inputs}

For clarity, we state each external result before applying it.

Suppose henceforth that a nontrivial $1$-perfect code exists in $J(n,w)$, where $n=2w+\delta$ with $\delta\ge0$ by complementation. Retain the notation
\[
{\bar\omega}=n-w=w+\delta,\qquad \varphi=w-d,\qquad M={\bar\omega}+d,
\]
and put
\[
L=w-d+1,\qquad \Phi_1=\Phi_1(n,w).
\]
For $1$-perfect codes, Etzion and Schwartz \cite[Theorem~18]{EtSc04} gave the strength equation
\[\sigma_1(w,\delta,\varphi+1)=1+w(w+\delta-\varphi-1)-(\varphi+1)(w+\delta-\varphi)=0.\]
Solving the equation gives
\[w=(d-1)(\delta+d)+1, \quad \text{or} \quad \delta=\frac{w-d^2+d-1}{d-1}. \]
It follows that
\begin{equation}\label{eq:s7-identities}
\begin{aligned}
&\Phi_1=1+w{\bar\omega}=ML,\quad L=(d-1)(\delta+d-1)+1,\quad dL-(d-1){\bar\omega}=1,\\
&M-L=\delta+2d-1,\quad L<{\bar\omega}<M,\\
&{\bar\omega}=d(\delta+d-1)+1> d(d-1).
\end{aligned}
\end{equation}
For convenience, we use $ML$ instead of the sphere size $\Phi_1$ henceforth.
The last inequality will make the localization interval too short to contain two powers with a common exponent greater than $1$; see \cref{lem:s7-coprime}.
\cref{cor:prime factors from chain divisitilities} gives
\begin{equation}\label{eq:s7-localization}
  \Phi_1=ML\text{ is squarefree},\qquad
  {\bar\omega}<p^{s_p}\le M\quad(p\mid ML),
\end{equation}
where the localized exponent $s_p$ is fixed for each prime divisor $p$ of $ML$. In particular, both $M$ and $L$ are squarefree and $\gcd(M,L)=1$.

We next record the radius-one result from Silberstein and Etzion, and Gordon, in which the codes under consideration are nontrivial.

\begin{proposition}[Silberstein and Etzion, Theorem~2 and Theorem~3 of \cite{SiEt10}]
\label{prop:s7-SE-original}
Suppose a $1$-perfect code in $J(2w+\delta,w)$ exists, then
\[\delta<w/11.\] The strength redundancy
 satisfies
$d>1,\, d\equiv0\text{ or }1\pmod3,\,w-d\equiv0,1,4\text{ or }9\pmod{12}$,
and the following quotient is an integer:
\[
  \frac{\displaystyle\prod_{i=0}^{d-2}
    \bigl(wd-(d+i(d-1))\bigr)}
  {(d-1)!(d-1)^{d-1}d(w-d+1)}
  \in\mathbb Z.
\]
\end{proposition}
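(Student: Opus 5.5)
Since the statement is quoted from Silberstein and Etzion \cite{SiEt10}, the proof in the paper can simply cite Theorems~2 and~3 there. To check that the cited statement fits the present notation, I would rederive each clause from the strength equation and from the design property of a $1$-perfect code $C$. Put $\varphi=w-d$. The code $C$ is a $\varphi$-design, so for every $0\le i\le\varphi$ the number of codewords through a fixed $i$-set is an integer $\lambda_i$.

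First, $d>1$. From the strength equation, $w=(d-1)(\delta+d)+1$. If $d=1$, this gives $w=1$, and then a $1$-perfect code is trivial, since a single codeword already has radius $w=e$. If $d=0$, the code is a $w$-design, i.e.\ all of $\binom{[n]}{w}$, which is $0$-perfect and not $1$-perfect. Hence $d\ge2$.

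Second, the inequality $\delta<w/11$. I would substitute $\delta=(w-d^2+d-1)/(d-1)$ into the existence constraints. The strict Roos bound of \cref{prop:strict-roos} with $e=1$ gives only $n<3(w-1)$, which is too weak. So I expect to follow the Silberstein--Etzion route instead: combine the integrality of $\lambda_{\varphi}$ and $\lambda_{\varphi-1}$ with $\Phi_1=ML$ from \eqref{eq:s7-identities} and show that $d$ cannot be small compared with $\sqrt w$. Since $\delta\approx w/(d-1)$, the bound $\delta<w/11$ is equivalent to $d-1>11$ roughly. The remaining small values $d\le12$ should then be excluded one at a time using the congruences below together with the divisibility $\Phi_1\mid\binom nw$.

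Third, the congruences. These come from Steiner triple systems derived from $C$. Fixing $\varphi-1$ points of a codeword and taking the derived design gives a Steiner system $S(2,3,\cdot)$ on a point set of size governed by $d$, so the admissibility condition ($v\equiv1,3\pmod 6$) forces $d\equiv0,1\pmod3$. Applying the analogous argument to the residual structure on the $w-d$ points, and combining the conditions modulo $3$ and modulo $4$ (as in \eqref{eq:embedded-steiner-shift} with $e=1$, $j=1,2$), should give $w-d\equiv0,1,4,9\pmod{12}$.

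Fourth, the integrality of the displayed quotient. I would compute
\[
\lambda_\varphi=\frac{\binom{n-\varphi}{w-\varphi}}{\Phi_1}\,|C|\cdot\frac{\binom{n}{w}}{\binom{n}{\varphi}}\Big/\binom{n}{w}
\]
explicitly, i.e.\ $|C|=\binom nw/(ML)$ and $\lambda_\varphi=|C|\binom{w}{\varphi}/\binom{n}{\varphi}$. Then I would substitute $n=2w+\delta$ with $\delta$ written through $d$, so that each factor $n-\varphi-i$ becomes $(wd-(d+i(d-1)))/(d-1)$. This produces the displayed product over $0\le i\le d-2$, together with the denominators $(d-1)!$, $(d-1)^{d-1}$, $d$ and $L=w-d+1$, the last coming from $dL-(d-1)\bar\omega=1$.

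The main obstacle is the bound $\delta<w/11$. Its proof in \cite{SiEt10} needs a case analysis on small $d$, whereas the other clauses are direct algebra. In the paper I would therefore cite that part.
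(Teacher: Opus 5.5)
The paper gives no proof of this proposition; it is quoted from \cite{SiEt10}, so citing it is exactly what the paper does. Your independent checks of $d>1$ and of the integrality clause are sound in substance. The quotient is $\lambda_\varphi=\binom{n-\varphi}{d}/\Phi_1=\binom{M}{d}/(ML)=\prod_{i=0}^{d-2}(M-1-i)/(d!\,L)$, with $(d-1)(M-1-i)=wd-(d+i(d-1))$. There are two slips. First, the factors are $n-\varphi-1-i$, not $n-\varphi-i$, because the top factor $M=n-\varphi$ cancels against $\Phi_1=ML$. Second, $L=w-d+1$ enters as the cofactor of $M$ in $\Phi_1$, not through $dL-(d-1)\bar\omega=1$. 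Your first displayed formula for $\lambda_\varphi$ is also garbled; the ``i.e.'' version is the correct one.

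The real gap is the congruence clause, which you propose to rederive rather than cite. Fixing $\varphi-1$ points of the $\varphi$-design $C$ gives blocks of size $d+1$ on $M+1$ points with index $\lambda_\varphi$. That is a $1$-design, not a Steiner triple system. The triple systems actually embedded in a $1$-perfect code live on $w$ and $n-w$ points. Moreover, \eqref{eq:embedded-steiner-shift} with $e=1$, $j=1,2$ only yields $w,\bar\omega\equiv1,3\pmod 6$, which says nothing modulo $4$ and nothing about $w-d$.

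The missing input is \cref{prop:s7-modulo12}, which gives $w\equiv\bar\omega\equiv1\pmod 6$ and $12\mid\delta$. Combine this with the strength identities $w-1=(d-1)(\delta+d)$ and $w-d=(d-1)(\delta+d-1)$. You get $3\mid d(d-1)$, i.e.\ $d\equiv0,1\pmod 3$, and $w-d\equiv(d-1)^2\pmod{12}$, a square modulo $12$ and hence in $\{0,1,4,9\}$.

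As for $\delta<w/11$, which you cite: the reduction is exact, not rough, since $\delta=(w-d^2+d-1)/(d-1)<w/(d-1)$, so $d\ge12$ suffices. But the tool for small $d$ is not ``congruences plus $\Phi_1\mid\binom nw$''. Instead, reduce the integrality of $\lambda_\varphi$ modulo $L$, where $w\equiv d-1$. This gives $L\mid\prod_{k=1}^{d-1}\bigl((d-1)k-1\bigr)$. For $3\le d\le 11$ that caps $w$ below $10^{17}+d$, far below the bound $w>n/3\ge 2^{250}/3$ from \cref{prop:strict-roos,prop:s7-gordon}. The case $d=2$ is excluded by the congruence, or by strict Roos since it forces $n=3(w-1)$.
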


\begin{proposition}[Gordon, Theorem~8 of \cite{Gor06}]
\label{prop:s7-gordon}
There are no $1$-perfect codes in $J(n,w)$ for all $n<2^{250}$.
\end{proposition}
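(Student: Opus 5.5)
This proposition is quoted from Gordon, and the plan is to rebuild his verification in a form that fits the framework of this section, not to cite it blindly. We parametrize a hypothetical code by the strength redundancy $d$ and by $\delta$. By the Etzion--Schwartz strength equation, $w=(d-1)(\delta+d)+1$, and \eqref{eq:s7-identities} gives $\Phi_1=ML$ with $L<{\bar\omega}<M={\bar\omega}+d$. The condition $n<2^{250}$ then becomes a finite box of pairs $(d,\delta)$, further cut down by $\delta<w/11$, $d\equiv 0,1\pmod 3$ and $w-d\equiv0,1,4,9\pmod{12}$ from \cref{prop:s7-SE-original}. This box is far too large to scan naively. The first task is therefore to convert the arithmetic constraints into a structure that can be enumerated.

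The key structural step uses the localization \eqref{eq:s7-localization}. Every prime $p\mid L$ satisfies $p\le L<{\bar\omega}<p^{s_p}$, so $s_p\ge2$. Since $L$ is squarefree and $L>1$, its prime factors give prime powers $p^{s_p}$ of exponent at least $2$ lying in the short window $({\bar\omega},{\bar\omega}+d]$. Here $d\lesssim\sqrt{{\bar\omega}}$, because ${\bar\omega}>d(d-1)$. The mean-value estimate used in \cref{sec:no-four-perfect-codes} shows that two such powers with a common exponent $g\ge2$ differ by more than $2\sqrt{{\bar\omega}}>d$. Hence the exponents $s_p$ for $p\mid L$ are pairwise distinct, and in fact pairwise coprime. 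Because $L\approx{\bar\omega}$ while each $p\le{\bar\omega}^{1/s_p}$, we get $\sum_{p\mid L}1/s_p\ge1-o(1)$. This forces a small collection of primes with pairwise coprime exponents whose powers all cluster within $\sqrt{{\bar\omega}}$ of each other.

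Next, for ${\bar\omega}<2^{250}$ every exponent satisfies $s_p\le 250$, so the candidate exponent sets are finitely many and small. For each pair of exponents $(s,t)$ we enumerate the near-coincidences $|p^s-q^t|\le\sqrt{2\,p^s}$. This is done with the continued-fraction and Legendre argument of \cref{thm:section5-two}, applied to $\log p/\log q$, or by direct lattice search for small exponents. Each surviving cluster pins ${\bar\omega}$ to an interval of length at most $d$, and $L$ is then the product of the clustered primes. We then recover $d$ and $\delta$ from $dL-(d-1){\bar\omega}=1$, and test squarefreeness of $M$ and the integrality quotient of \cref{prop:s7-SE-original}.

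The main obstacle is making this enumeration provably complete for pairs involving exponent $2$ or $3$ with very large bases. Near-collisions such as $p^2$ near $q^3$ have bases up to $2^{125}$, so naive search is impossible. There I would first try to exclude exponent $2$ entirely, using the squarefreeness of $M$ together with the congruences modulo $12$. Failing that, I would lean on the effective bound of \cref{lem:section5-Laurent} to cap the exponents of the other primes, and finish by an exact computer search over the remaining bounded range.
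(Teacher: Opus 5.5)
\textbf{There is a genuine gap: the decisive step of your plan is left open, and the tools you name cannot close it.}

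The paper does not prove this proposition; it imports it from Gordon \cite{Gor06}. Your text is a plan, not a proof. Its decisive step is a provably complete enumeration of all clusters of prime powers $p^{s_p}$, $s_p\ge 2$, inside $({\bar\omega},{\bar\omega}+d]$ for ${\bar\omega}<2^{250}$. You concede this step is open.

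\begin{itemize}
\item The continued-fraction argument does not transfer. The Legendre argument of \cref{thm:section5-two} works because the irrational $\log2/\log3$ is fixed and only the exponents vary. Here the exponents are already trivially at most $250$. The unknowns are the bases $p,q$, up to about $2^{125}$, so there is no single continued fraction to expand.
\item Near-coincidences involving exponent $2$ are not rare. For a pair $(2,t)$, consecutive squares near $X$ are about $2\sqrt X$ apart. So the nearest square to every $q^t$ satisfies $|p^2-q^t|\le\sqrt{2p^2}$, and the only remaining filter is primality of its root. Your enumeration therefore degenerates into scanning all $q<2^{251/3}$, roughly $2^{83}$ cases. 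Even the pair $(3,4)$ needs about $2^{63}$ fourth powers.
\item The fallbacks do not help. You give no mechanism by which squarefreeness of $M$ and the congruences modulo $12$ would exclude a square $p^2$ in the window. The paper itself never excludes exponent $2$; it only bounds its contribution by $1/2$. \cref{lem:section5-Laurent} caps one exponent in terms of $\log q$ for a fixed second prime $q$. It bounds exponents, not bases, and the bases are the difficulty here.
\end{itemize}

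\textbf{The missing idea is to fix one base at $2$.}

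\begin{itemize}
\item By \cref{lem:s7-congruences}, $2\mid\Phi_1$.
\item The proof of \cref{prop:s7-prime-M} still works with $d\ge2$ in place of $d\ge59$. This matters because \cref{lem:s7-one-constant-d} itself relies on Gordon. The constant $2.017$ becomes about $2.12$, and one uses $\log M<\log{\bar\omega}+\log2$. So $M$ is prime once ${\bar\omega}$ exceeds a small constant, and hence $2\mid L$.
\item Every odd $v\in\mathcal V$ with least prime factor $q_v$ then gives $0<|2^u-b^{q_v}|^2<\min\{2^u,b^{q_v}\}$, where $u=s_2\le250$.
\item For fixed $(u,q)$, the odd integer $b$ must be one of the two odd integers adjacent to $2^{u/q}$. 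This is a check of under ten thousand pairs, in the spirit of \cref{lem:s7-certificate,prop:s7-bounded} but run for $u<250$.
\item At most one exponent in $\mathcal V$ is even. If no odd exponent survives the check, then $T\le1/2$, contradicting $T>1-c/\log{\bar\omega}$.
\end{itemize}

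This route still has two loose ends. Genuine coincidences do occur and must be handled one by one: for example, $3251^3-2^{35}=83883<2^{35/2}$, with $3251$ prime. The smallest values of ${\bar\omega}$ also need separate treatment. Without a reduction of this kind, your proposal does not establish the statement.
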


Consequently $n\ge2^{250}$ and ${\bar\omega}\ge n/2\ge2^{249}$. So we assume that ${\bar\omega}\ge2^{249}$ henceforth. We use the results from Silberstein and Etzion, and Gordon, to give the following constant bound $d\ge59$.

\begin{lemma}
\label{lem:s7-one-constant-d}
A nontrivial $1$-perfect code in $J(n,w)$, with  strength redundancy $d$, satisfies
\[
  d\ge59.
\]
\end{lemma}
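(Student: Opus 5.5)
The plan is to combine the explicit formula $w=(d-1)(\delta+d)+1$ with the two quoted inputs. \cref{prop:s7-gordon} makes $w$ and ${\bar\omega}$ astronomically large, while \cref{prop:s7-SE-original} either bounds $w$ directly (small $d$) or forces $L=w-d+1$ to divide a number depending only on $d$ (remaining $d$). First I dispose of $d=1$ and of $d\le 11$. \cref{prop:s7-SE-original} gives $d>1$. Substituting $\delta<w/11$ into $w=(d-1)(\delta+d)+1$ gives
\[
w\Bigl(1-\tfrac{d-1}{11}\Bigr)<(d-1)d+1 .
\]
For $2\le d\le 11$ the left coefficient is at least $1/11$, so $w<11(d^2-d+1)<1300$. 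But $n\ge 2^{250}$ and $n=2w+\delta<(2+1/11)w$, so $w>2^{248}$, a contradiction.

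For $12\le d\le 58$ (only $d\equiv0,1\pmod 3$ need be treated) I use the integrality statement of \cref{prop:s7-SE-original}. Its denominator contains the factor $w-d+1=L$, so $L$ divides the numerator. Since $w\equiv d-1\pmod L$, each numerator factor satisfies
\[
wd-(d+i(d-1))\equiv d(d-2)-i(d-1)\pmod L .
\]
Hence $L\mid C_d:=\prod_{i=0}^{d-2}\bigl(d(d-2)-i(d-1)\bigr)$. No factor vanishes: $d(d-2)=i(d-1)$ would need $(d-1)\mid (d-1)^2-1$, i.e.\ $d=2$. So $C_d\ne0$, and every prime divisor $p$ of $L$ divides some factor, giving $p\le d^2\le 3364$.

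On the other hand, \eqref{eq:s7-identities} gives $L=(d-1)(\delta+d-1)+1>\frac{d-1}{d}({\bar\omega}-1)$, so $L>2^{247}$. By \eqref{eq:s7-localization}, $L$ is squarefree and each prime $p\mid L$ has a localized power with ${\bar\omega}<p^{s_p}\le {\bar\omega}+d$. Since ${\bar\omega}\ge 2^{249}$ and $d\le 58$, any two distinct primes $p,q\mid L$ would satisfy
\[
0<|s_p\log p-s_q\log q|<\frac{58}{2^{249}},
\]
with $p,q\le 3364$ and $s_p,s_q\le 250$.

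The main obstacle is showing that no such pair $p,q$ exists. \cref{lem:section5-Laurent} is far too weak at this scale: it only gives $|\Lambda|\gtrsim e^{-10^5}$. So I would settle it by a finite computation. For each of the roughly $470$ primes $p\le 3364$, list the (at most one or two) exponents $s$ with $p^s$ in the range $[2^{249},2^{251}]$ allowed by $M<2{\bar\omega}$. Then check that no two of these powers differ by at most $58$ relative to their size. I expect the nearest coincidences to be many orders of magnitude too far apart.

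Once this is verified, $L$ has at most one prime factor, so $L\le 3364$, contradicting $L>2^{247}$. As a fallback if the pairwise check is awkward, for each admissible $d$ one can compute $\operatorname{rad}(C_d)$ directly and compare its divisors with the required size and localization of $L$. Together these cases give $d\ge 59$.
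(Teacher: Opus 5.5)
Everything up to the final computation is sound. The case $2\le d\le 11$ via $\delta<w/11$ is correct, as are the divisibility $L\mid C_d:=\prod_{i=0}^{d-2}\bigl(d(d-2)-i(d-1)\bigr)$, the bound $p\le d^2$ for $p\mid L$, the estimate $L>2^{247}$, and the use of squarefreeness and localization. The gap is the range of the finite check. You restrict to $p^s\in[2^{249},2^{251}]$ and $s_p,s_q\le 250$, citing $M<2\bar\omega$. That inequality only says each prime has at most one power in the window \emph{once $\bar\omega$ is fixed}. \cref{prop:s7-gordon} gives only the lower bound $\bar\omega\ge 2^{249}$, and nothing in your argument bounds $\bar\omega$ from above, so every $\bar\omega>2^{250}$ is left uncovered. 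As you observe yourself, \cref{lem:section5-Laurent} cannot close an unbounded range at this scale.

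The repair is already in your hands. Since $C_d\neq 0$ and $L\mid C_d$, you get $L\le C_d<(d-1)^{d-1}(d-1)!$, hence $\bar\omega=\frac{d}{d-1}(L-1)+1<2L<2^{588}$ for $d\le 58$. The check must therefore cover all powers $p^s\in(2^{249},2^{589})$ with $p\le 3364$, which means exponents up to about $590$ for $p=2$. For each pair $p\ne q$ it should verify $|p^a-q^b|\ge 58$ in exact integer arithmetic. This is still a small computation, but it must actually be run; the expectation that near-coincidences are far apart is not yet a proof.

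For comparison, the paper uses no localization here. It reduces the quotient of \cref{prop:s7-SE-original} modulo $L$, writes the outcome as $C_d/(d-1)^d\equiv 0\pmod L$, and then concludes $C_d/(d-1)^d\ge L$. Since $C_d/(d-1)^d\approx (d-2)!$, comparing with $w>\frac{11}{23}2^{250}$ yields exactly the threshold $59$ with no computation.

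You were right not to take that shortcut. Each factor equals $(d-1)(d-1-i)-1\equiv -1\pmod{d-1}$, so $C_d/(d-1)^d$ is not an integer, and the congruence says only $L\mid C_d$, which is your divisibility. A concrete check: $d=4$, $w=13$, $\delta=0$ meets every condition of \cref{prop:s7-SE-original}, since the quotient is $90720/6480=14$. Yet $C_4/3^4=80/81<L=10$. The valid size bound $L\le C_d$ only excludes $d\le 30$, because already $\log_2 C_{31}\approx 255$. So for $31\le d\le 58$ an extra ingredient is genuinely needed, such as your localization computation with the corrected range.
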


\begin{proof}
\cref{prop:s7-SE-original} gives  a divisibility condition
\[\frac{\prod_{i=0}^{d-2}(wd-(d+i(d-1)))}{(d-1)!(d-1)^{d-1}d(w-d+1)}\in \mathbb{Z}. \]
Since $\delta=\frac{w-d^2+d-1}{d-1}= \frac{w-d^2}{d-1}+1\in \mathbb{Z}$ and $\gcd(d^2,d-1)=1$, we have $\gcd(w-d+1,d-1)=\gcd(w,d-1)=1$. So, the division by $d-1$ is legitimate modulo $w-d+1$. The above divisibility condition gives
\[\frac{\prod_{i=0}^{d-2}(wd-(d+i(d-1)))}{(d-1)^d} \equiv0\pmod{w-d+1}.\]
Reducing each factor $(wd-(d+i(d-1)))$ modulo $w-d+1$, we get
\[\frac{\prod_{i=0}^{d-2}(wd-(d+i(d-1)))}{(d-1)^d} \equiv\frac{\prod_{i=0}^{d-2}(d^2-2d-i(d-1))}{(d-1)^d} \equiv0\pmod{w-d+1}.\]
Since $\prod_{i=0}^{d-2}(d^2-2d-i(d-1))$ is positive, we have
\begin{align}\label{eq-d and w}
\frac{\prod_{i=0}^{d-2}(d^2-2d-i(d-1))}{(d-1)^d}\ge w-d+1.
\end{align}
By \cref{prop:s7-SE-original}, $\delta<w/11$. Then   $w={\bar\omega}-\delta> \frac{11}{23}2^{250}$. So \eqref{eq-d and w} does not hold
 when $d\le 58$. Therefore, $d\ge 59$.
\end{proof}


\begin{proposition}[Corollary~9 of \cite{EtSc04}]
\label{prop:s7-modulo12}
If there exist a $1$-perfect code in $J(n,w)$ then either $w\equiv n-w\equiv1\pmod{12}$ or $w\equiv n-w\equiv7\pmod{12}$.
\end{proposition}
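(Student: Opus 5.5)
The plan is to derive the two congruence conditions, modulo $3$ and modulo $4$, separately from results already quoted, and then combine them by the Chinese remainder theorem.

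\emph{Step 1 (parity and residue mod $3$).} Apply \cref{prop:embedded-steiner} with $e=1$. For $j=1$ the divisibility reads $2\mid\binom{w-1}{1}$, so $w$ is odd. For $j=2$ it reads $3\mid\binom{w}{2}$, so $w\equiv 0$ or $1\pmod 3$. The congruence \eqref{eq-congruence-e+2} with $e+2=3$ gives $2(w+2)\equiv0\pmod 3$, that is, $w\equiv1\pmod3$; this already implies the $j=2$ condition. The same conditions applied to the second factor give ${\bar\omega}=n-w$ odd and ${\bar\omega}\equiv1\pmod 3$. Hence $w\equiv{\bar\omega}\equiv1\pmod 6$, and each of $w,{\bar\omega}$ is $\equiv1$ or $7\pmod{12}$.

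\emph{Step 2 (matching the residues mod $4$).} It remains to exclude the mixed cases, i.e.\ to show $w\equiv{\bar\omega}\pmod 4$. Since both are odd, if $w\not\equiv{\bar\omega}\pmod4$ then one is $\equiv1$ and the other $\equiv3\pmod4$. In that case $w{\bar\omega}\equiv3\pmod4$, so $4\mid 1+w{\bar\omega}=\Phi_1(n,w)$. This contradicts the squarefreeness of $\Phi_1(n,w)$ from \cref{cor:prime factors from chain divisitilities}. Therefore $w\equiv{\bar\omega}\pmod 4$.

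\emph{Step 3 (combining).} Together with $w\equiv{\bar\omega}\equiv1\pmod3$, Step 2 gives $w\equiv{\bar\omega}\pmod{12}$ with common residue in $\{1,7\}$. This is exactly the statement.

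The only nontrivial point is Step 2. Step 1 alone yields each residue individually but not that they agree; the squarefree property of the sphere size supplies the missing mod-$4$ link.
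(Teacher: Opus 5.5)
Your argument is correct. The paper gives no proof of this statement: it imports it as Corollary~9 of Etzion--Schwartz and uses it only in \cref{lem:s7-congruences}. You instead derive it from two inputs the paper already quotes.
\begin{enumerate}
\item The $e=1$ case of \cref{prop:embedded-steiner}. The $j=1$ divisibility gives parity, and the $(e+2)$-congruence gives $w\equiv 1\pmod 3$. Together these yield $w\equiv\bar\omega\equiv1\pmod 6$.
\item The squarefreeness of $\Phi_1(n,w)=1+w\bar\omega$ from \cref{cor:prime factors from chain divisitilities}. This excludes the mixed case $w\not\equiv\bar\omega\pmod 4$, since there $w\bar\omega\equiv 3\pmod 4$ and so $4\mid\Phi_1$.
\end{enumerate}

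What your route buys is that the citation becomes logically redundant inside the paper. It also shows that the mod-$4$ agreement, the only part not visible from the Steiner-type conditions, is a one-line consequence of squarefreeness. The price is that an elementary congruence now rests on the much heavier prime-power localization theorem of the companion paper. That is legitimate only if that theorem is not itself proved via Etzion--Schwartz's corollary. Nothing in its general-$e$ formulation suggests it is, but you are relying on it.

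Two hypotheses you use silently should be stated.
\begin{itemize}
\item Nontriviality is essential, not cosmetic. The complementary pair $\{\{1,2,3\},\{4,5,6\}\}$ is a $1$-perfect code in $J(6,3)$ with $w=3\not\equiv1,7\pmod{12}$. Your proof picks up nontriviality only through the hypotheses of the quoted propositions.
\item The reduction to $n\ge 2w$ by complementation, under which the localization result is stated, is harmless. Both the conclusion and $\Phi_1$ are symmetric in $w$ and $n-w$.
\end{itemize}
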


\begin{lemma}\label{lem:s7-congruences}
If there exists a $1$-perfect code in $J(n,w)$ then
\[ w\equiv {\bar\omega}\equiv1\pmod6,\qquad 12\mid\delta,\qquad 2\mid ML,\qquad 3\nmid ML.
\]
\end{lemma}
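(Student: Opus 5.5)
The whole lemma should follow from \cref{prop:s7-modulo12} together with the identity $\Phi_1=1+w{\bar\omega}=ML$ in \eqref{eq:s7-identities}.

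\cref{prop:s7-modulo12} gives either $w\equiv{\bar\omega}\equiv1\pmod{12}$ or $w\equiv{\bar\omega}\equiv7\pmod{12}$. In both cases $w\equiv{\bar\omega}\equiv1\pmod6$, because $1\equiv7\pmod 6$. Since $w$ and ${\bar\omega}=n-w$ lie in the same residue class modulo $12$, their difference $\delta={\bar\omega}-w$ is divisible by $12$. This proves the first two claims.

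For the divisibility claims I compute $\Phi_1=1+w{\bar\omega}$ modulo $2$ and modulo $3$.

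\emph{Modulo $2$.} Both $w$ and ${\bar\omega}$ are odd, so $w{\bar\omega}$ is odd. Hence $1+w{\bar\omega}$ is even, and $2\mid ML$.

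\emph{Modulo $3$.} Both $w$ and ${\bar\omega}$ are $\equiv1\pmod 3$, so $w{\bar\omega}\equiv1$. Hence $1+w{\bar\omega}\equiv2\pmod3$, and $3\nmid ML$.

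As a consistency check, I would note that \cref{thm:modulo-p-criterion} with $(p,P,\gamma)=(2,2,1)$ also forces $2\mid\Phi_1$, since $e=1\equiv-1\pmod 2$.

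There is no real obstacle here. The only point needing care is to use the identity $ML=\Phi_1=1+w{\bar\omega}$ from \eqref{eq:s7-identities}, so that the divisibility statements transfer directly from $\Phi_1$ to $ML$.
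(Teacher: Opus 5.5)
Your proposal is correct and follows the paper's proof: you read off $w\equiv{\bar\omega}\equiv1\pmod 6$ and $12\mid\delta$ from \cref{prop:s7-modulo12}, then reduce $ML=\Phi_1=1+w{\bar\omega}$ modulo $2$ and $3$. Your consistency check via \cref{thm:modulo-p-criterion} with $(p,P,\gamma)=(2,2,1)$ is valid but not needed.
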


\begin{proof}
The first two congruences is directly from \cref{prop:s7-modulo12}. Hence $w$ and ${\bar\omega}$ are odd, and $w{\bar\omega}\equiv1\pmod6$. Therefore, $2\mid 1+w{\bar\omega}=ML$ and $3\nmid 1+w{\bar\omega}=ML$.
\end{proof}


\begin{lemma}\label{lem:s7-coprime}Suppose that  there exists a $1$-perfect code in $J(n,w)$.
For distinct prime divisors $p,q$ of $ML$, one has $\gcd(s_p,s_q)=1$. An exponent $1$ occurs if and only if $M$ is the unique prime itself.
\end{lemma}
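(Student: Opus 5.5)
For the first claim, I would argue by contradiction through a common power. Suppose $p\ne q$ divide $ML$ and $g=\gcd(s_p,s_q)\ge2$. Write $p^{s_p}=A^g$ and $q^{s_q}=B^g$ with $A\ne B$. Both powers lie in $({\bar\omega},M]$ by \eqref{eq:s7-localization}, so $\min\{A,B\}>{\bar\omega}^{1/g}$.

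By the mean-value theorem,
\[
|A^g-B^g|\ge g\min\{A,B\}^{g-1}>g\,{\bar\omega}^{1-1/g}\ge 2\sqrt{{\bar\omega}},
\]
since $g\,{\bar\omega}^{1-1/g}$ is increasing in $g$ for large ${\bar\omega}$ (here ${\bar\omega}\ge2^{249}$). On the other hand, the two powers are distinct elements of $({\bar\omega},{\bar\omega}+d]$, so their difference is at most $d-1$. From the last identity in \eqref{eq:s7-identities}, ${\bar\omega}>d(d-1)\ge(d-1)^2$, so $d-1<\sqrt{{\bar\omega}}<2\sqrt{{\bar\omega}}$. This is the promised contradiction. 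I would state the $g=2$ case explicitly, since it is the tightest: $|A^2-B^2|\ge A+B>2\sqrt{{\bar\omega}}>d-1$.

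For the second claim, first note what $s_p=1$ means: $p$ itself lies in $({\bar\omega},M]$. Now use the identities. Since $M=\Phi_1/L$ and $L<{\bar\omega}$ by \eqref{eq:s7-identities}, every prime divisor of $L$ is at most $L<{\bar\omega}$. So such a prime cannot be in $({\bar\omega},M]$, and every prime of $L$ has exponent at least $2$. Hence if $s_p=1$, then $p\mid M$ with $p>{\bar\omega}$. Write $M=pm$. Then $m=M/p<M/{\bar\omega}<2$ by \eqref{eq:section5-M-less-2z}, or directly because $d<{\bar\omega}$. So $m=1$ and $M=p$ is prime.

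Conversely, if $M$ is prime, then $p=M$ divides $ML$. Its only localized power in $({\bar\omega},M]$ is $p^1$, because $p^2>M$, so exponent $1$ occurs. Uniqueness is immediate: only one prime divisor can equal $M$. Squarefreeness also means $L$ has no factor $M$, and indeed $L<M$.

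The main obstacle is the inequality step for general $g$. One must check $g\,{\bar\omega}^{1-1/g}\ge2\sqrt{{\bar\omega}}$ for all $g\ge2$. This follows since ${\bar\omega}^{1/2-1/g}\ge1$ for $g\ge2$, so $g\,{\bar\omega}^{1-1/g}\ge g\sqrt{{\bar\omega}}\ge2\sqrt{{\bar\omega}}$. Everything else reduces to the identities \eqref{eq:s7-identities}.
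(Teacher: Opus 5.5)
Your proof is correct and follows the paper's argument essentially step for step. You prove coprimality with the mean-value bound $|A^g-B^g|>g\min\{A,B\}^{g-1}\ge 2\sqrt{{\bar\omega}}>d-1$, using ${\bar\omega}>d(d-1)$, and you prove the exponent-one claim from $p>{\bar\omega}>L$ and $M<2{\bar\omega}$, which force $M=p$. One small caution: \eqref{eq:section5-M-less-2z} was derived only for $e\ge2$, so for $e=1$ use your direct justification instead, namely $d<{\bar\omega}$ from ${\bar\omega}>d(d-1)$ with $d\ge2$.
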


\begin{proof}
Suppose $g=\gcd(s_p,s_q)\ge2$. After interchanging $p$ and $q$ if necessary, write
\[
  p^{s_p}=A^g<B^g=q^{s_q}.
\]
Both powers $A^g,B^g$ lie in the integer interval $({\bar\omega},{\bar\omega}+d]$, so their difference is less than $d$. On the other hand, $B\ge A+1$ and the mean value theorem gives
\[ B^g-A^g >gA^{g-1} \ge2\sqrt{A^g} >2\sqrt {\bar\omega}>d-1,
\]
where the last inequality comes from ${\bar\omega}>d(d-1)$ in \eqref{eq:s7-identities}. This contradiction proves coprimality.

If $s_p=1$, localization gives $p>{\bar\omega}>L$, so $p$ cannot divide $L$ and must divide $M$. But $M<2{\bar\omega}<2p$, and the only multiple of $p$ in this range is $p$ itself. Thus $M=p$ is the prime. Conversely, if $M$ is prime, then its first power lies in $({\bar\omega},M]$ and no other prime can have exponent $1$ by the same argument.
\end{proof}

\subsection{Separate reciprocal masses}

All results obtained in this section are under the assumption that   there exists a $1$-perfect code in $J(n,w)$.
Define the reciprocal sums and the products of their denominators by
\[S_M=\sum_{p\mid M}\frac1{s_p},\quad Q_M=\prod_{p\mid M}s_p;\quad S_L=\sum_{p\mid L}\frac1{s_p},\quad Q_L=\prod_{p\mid L}s_p.
\]
\begin{lemma}[Two reciprocal masses]\label{lem:s7-two-masses}

Suppose that  there exists a $1$-perfect code in $J(n,w)$. Then
\begin{equation}\label{eq:s7-B-error}
  0<1-S_L<\frac{2.017}{(d-1)\log M}.
\end{equation}
If $M$ is composite, then in addition
\begin{equation}\label{eq:s7-A-error}
    0<S_M-1<\frac{d-1}{{\bar\omega}\log {\bar\omega}},\qquad  \frac{Q_MQ_L}{s_2}>\frac{\log2}{2.017}{\bar\omega}\log {\bar\omega}.
\end{equation}
\end{lemma}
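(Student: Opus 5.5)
The plan is to read all three inequalities off the factorizations of the squarefree numbers $L$ and $M$, using the localization \eqref{eq:s7-localization} and the identities \eqref{eq:s7-identities}. The integrality of the reciprocal sums then gives the denominator inequality.

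\emph{The mass $S_L$.} Since $L$ is squarefree and $\gcd(M,L)=1$, each prime $p\mid L$ contributes
\[
\log p=\frac1{s_p}\log p^{s_p}, \qquad \log p^{s_p}\in(\log\bar\omega,\log M].
\]
Summing over $p\mid L$ gives
\[
S_L\log\bar\omega<\log L\le S_L\log M .
\]
Because $L<\bar\omega$, the left inequality gives $S_L<\log L/\log\bar\omega<1$. The right inequality gives
\[
1-S_L\le\frac{\log(M/L)}{\log M}.
\]
It remains to bound $M/L$. From $dL-(d-1)\bar\omega=1$ we get $L>\frac{d-1}{d}\bar\omega$, and we also have $M=\bar\omega(1+d/\bar\omega)$. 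Hence
\[
\log(M/L)<\log\frac{d}{d-1}+\log\Bigl(1+\frac d{\bar\omega}\Bigr)<\frac1{d-1}+\frac d{\bar\omega}.
\]
Since $\bar\omega>d(d-1)$ and $\bar\omega\ge 2^{249}$, the second term is negligible compared with $1/(d-1)$. This easily gives the constant $2.017$; I expect some slack here, so the exact constant is not delicate.

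\emph{The mass $S_M$ when $M$ is composite.} By \cref{lem:s7-coprime}, no exponent equals $1$, and $M$ has at least two prime factors. No $p^{s_p}$ can equal $M$: otherwise $M=p^{s_p}$ with $s_p\ge2$, contradicting squarefreeness. So every localized power satisfies $\bar\omega<p^{s_p}\le M-1=\bar\omega+d-1$. The same summation as above gives
\[
S_M\log\bar\omega<\log M<S_M\log(\bar\omega+d-1).
\]
The right inequality yields $S_M>1$, because $M>\bar\omega+d-1$. The left inequality yields
\[
S_M<\frac{\log M}{\log\bar\omega}=1+\frac{\log(1+d/\bar\omega)}{\log\bar\omega}.
\]
I will sharpen this to the stated $(d-1)/(\bar\omega\log\bar\omega)$ by replacing $M$ with the largest actual localized power, which is at most $\bar\omega+d-1$. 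More precisely, I will write $\log M=\sum_p\frac1{s_p}\log p^{s_p}$ and compare $S_M\log\bar\omega$ with $\log M$ using $\log p^{s_p}\le\log(\bar\omega+d-1)$; this is routine bookkeeping.

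\emph{The denominator inequality.} This is the only nonlinear step. $S_M-1$ is a positive rational number whose denominator divides $Q_M$, so $S_M-1\ge1/Q_M$. Likewise $1-S_L$ is a positive rational with denominator dividing $Q_L$, so $1-S_L\ge1/Q_L$. Combining these with the two upper bounds gives
\[
Q_M>\frac{\bar\omega\log\bar\omega}{d-1},\qquad Q_L>\frac{(d-1)\log M}{2.017}.
\]
Multiplying, the factor $d-1$ cancels, and $Q_MQ_L>\bar\omega\log\bar\omega\,\log M/2.017$. Finally, $2\mid ML$ by \cref{lem:s7-congruences}, and $2^{s_2}\le M$ gives $s_2\le\log M/\log2$. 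Dividing by $s_2$ yields the claimed bound on $Q_MQ_L/s_2$.

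The main obstacle I anticipate is getting the sharp numerator $d-1$ rather than $d$ in the bound for $S_M-1$, which depends on excluding $p^{s_p}=M$. The constant $2.017$ in the bound for $1-S_L$ needs care but should follow from the explicit formulas for $L$ and $M$.
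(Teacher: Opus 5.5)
You have not established the bound $S_M-1<\frac{d-1}{\bar\omega\log\bar\omega}$, and the $d-1$ in it is needed for the third inequality. The other two steps are sound.

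\textbf{The mass $S_L$.} Your argument is correct and slightly sharper than the paper's. Splitting $\log(M/L)=\log(\bar\omega/L)+\log(M/\bar\omega)<\frac1{d-1}+\frac d{\bar\omega}<\frac2{d-1}$ gives the constant $2$ without using $d\ge59$. The paper instead bounds $(M-L)/L$ directly and needs $d\ge59$ to reach $2.017$. Your remark that $d/\bar\omega$ is negligible is wrong, however. For $\delta=0$ one has $\bar\omega=d(d-1)+1$, so $d/\bar\omega\approx1/(d-1)$; it is exactly $\bar\omega>d(d-1)$ that makes the bound work, with little slack.

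\textbf{The denominator step.} This is also fine, and simpler than the paper's exact-denominator argument. $S_M-1\ge1/Q_M$ only needs the denominator of $S_M$ to divide $Q_M$.

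\textbf{The gap: the upper bound on $S_M-1$.} An upper bound on $S_M$ can only come from a \emph{lower} bound on the localized powers, through $S_M\log\bigl(\min_p p^{s_p}\bigr)\le\log M$. The upper endpoint $p^{s_p}\le\bar\omega+d-1$ that you plan to use bounds $S_M$ from below; that is what gives $S_M>1$. So excluding $p^{s_p}=M$ cannot turn $d$ into $d-1$, and your bookkeeping would stop at $S_M-1<\log(1+d/\bar\omega)/\log\bar\omega$, with numerator $d$.

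The missing observation is integrality at the lower end. Since $p^{s_p}>\bar\omega$ is an integer, $p^{s_p}\ge\bar\omega+1$, hence $S_M\le\log M/\log(\bar\omega+1)$ and
\[
S_M-1\le\frac{\log\bigl(1+\frac{d-1}{\bar\omega+1}\bigr)}{\log(\bar\omega+1)}<\frac{d-1}{\bar\omega\log\bar\omega}.
\]

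This matters for the last inequality, because your final step cancels $d-1$. With numerator $d$, even using your constant $2$, you would only get $Q_MQ_L/s_2>\frac{d-1}{d}\cdot\frac{\log2}{2}\,\bar\omega\log\bar\omega$. That is below the stated $\frac{\log2}{2.017}\,\bar\omega\log\bar\omega$ for small admissible $d$, for example $d=59$.
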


\begin{proof}
For every localized prime, taking logarithms in \eqref{eq:s7-localization} gives
\[
  \frac{\log p}{\log M}\le\frac1{s_p} <\frac{\log p}{\log {\bar\omega}}.
\]
Squarefreeness is useful here because $\sum_{p\mid L}\log p=\log L$. Summing over the prime divisors of $L$ yields
\[ \frac{\log L}{\log M}\le S_L <\frac{\log L}{\log {\bar\omega}}<1.
\]
Therefore, using $\log(1+t)<t$ for $t>0$,
\[ 0<1-S_L \le\frac{\log(M/L)}{\log M} <\frac{M-L}{L\log M}.
\]
We bound the relative gap between $M$ and $L$. Using \eqref{eq:s7-identities} gives
\begin{equation}\label{eq:s7-relative gap}
  \frac{(d-1)(M-L)}{L}= 1+\frac{d^2-d-1}{L}\le 1+\frac{d^2-d-1}{(d-1)^2+1}=2+\frac{d-3}{(d-1)^2+1}< 2.017,
\end{equation}
where $\frac{d-3}{(d-1)^2+1}$ is decreasing and less than $0.017$ at $d=59$.
Now \eqref{eq:s7-relative gap} proves \eqref{eq:s7-B-error}.

Suppose $M$ is composite. By \cref{lem:s7-coprime}, every $s_p$ is at least $2$. For $p\mid M$, equality $p^{s_p}=M$ is impossible, since $M$ is squarefree. Thus ${\bar\omega}+1\le p^{s_p}<M$, and summing as above gives
\[ 1<S_M\le\frac{\log M}{\log({\bar\omega}+1)}.
\]
Since $M={\bar\omega}+d$, subtracting $1$ gives
\[ 0<S_M-1\le\frac{\log\bigl(1+(d-1)/({\bar\omega}+1)\bigr)}{\log({\bar\omega}+1)} <\frac{d-1}{({\bar\omega}+1)\log({\bar\omega}+1)} <\frac{d-1}{{\bar\omega}\log {\bar\omega}}.
\]

We next explain why such small errors force large products of exponents $Q_MQ_L$.
If $p_1,\ldots,p_r>1$ are pairwise coprime and $Q=\prod_i p_i$, then
\[
  \sum_{i=1}^r\frac1{p_i}
  =\frac{\sum_{i=1}^r Q/p_i}{Q}.
\]
Modulo $p_j$, all numerator terms except $Q/p_j$ vanish, and $Q/p_j$ is coprime to $p_j$. Hence the numerator is coprime to every factor $p_j$, so the fraction has reduced denominator exactly $Q$.
Subtracting an integer preserves that denominator. Applying this fact to $S_M-1>0$ and $1-S_L>0$ gives
\[
  \frac1{Q_M}\le S_M-1,\qquad
  \frac1{Q_L}\le1-S_L.
\]
Multiplication of the two error bounds with $s_2\le \log M/\log2$ therefore yields
\[
  \frac{Q_MQ_L}{s_2}\ge\frac{1}{s_2(S_M-1)(1-S_L)}>\frac{\log2}{2.017}{\bar\omega}\log {\bar\omega}.
\]
\end{proof}

\begin{proposition}\label{prop:s7-prime-M}
Then the integer $M={\bar\omega}+d$ is prime. 
\end{proposition}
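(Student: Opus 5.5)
Suppose $M$ is composite. Then every localized exponent is at least $2$ by \cref{lem:s7-coprime}, and \cref{lem:s7-two-masses} gives
\[
\frac{Q_MQ_L}{s_2}>\frac{\log2}{2.017}\,{\bar\omega}\log{\bar\omega}.
\]
The plan is to contradict this with an upper bound on $Q_MQ_L/s_2$. Such a bound should be only polynomial in $\log{\bar\omega}$, or at worst $\exp(O(\log{\bar\omega}/\log\log{\bar\omega}))$, which is far smaller than ${\bar\omega}\log{\bar\omega}$ once ${\bar\omega}\ge 2^{249}$.

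\textbf{Step 1: structure of the exponents.} By \cref{lem:s7-coprime}, the exponents $s_p$ for $p\mid ML$ are pairwise coprime and all at least $2$. Every $s_p$ satisfies $p^{s_p}\in({\bar\omega},M]$, so $s_p\le \log M/\log p\le \log M/\log 2$. Hence $Q_MQ_L$ is a product of pairwise coprime integers, each at most $\log M/\log 3$ apart from $s_2$. I will therefore bound
\[
Q_MQ_L/s_2\le \prod_{p\mid ML,\ p\ne2}s_p .
\]

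\textbf{Step 2: bound the product of pairwise coprime integers.} Let $X=\log M/\log3$. Any product of pairwise coprime integers, each at most $X$, divides the product of $r^{\lfloor\log X/\log r\rfloor}$ over primes $r\le X$. This is exactly $\operatorname{lcm}(1,\ldots,\lfloor X\rfloor)=e^{\psi(X)}$. By \cref{prop:s7-RS-psi},
\[
\prod_{p\ne2}s_p\le e^{1.03883X}=M^{1.03883/\log 3}.
\]
Since $1.03883/\log3\approx0.9456<1$ and $M<{\bar\omega}(1+1/\sqrt{\bar\omega})$, this is at most roughly ${\bar\omega}^{0.9456}$. That is strictly below $\frac{\log2}{2.017}{\bar\omega}\log{\bar\omega}$ for ${\bar\omega}\ge2^{249}$, which is the required contradiction.

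The key check is that the exponent $1.03883/\log 3$ is below $1$. This is exactly why $s_2$ was divided out: with $\log 2$ in place of $\log 3$ the exponent would exceed $1$. If that margin turns out too tight, the fallback is to refine the count. Distinct odd primes $p$ have distinct $p^{s_p}$ in an interval of length $d\le\sqrt{\bar\omega}$. Moreover, a prime $r$ can divide at most one $s_p$, so the number of factors is small. A cruder bound of the form (number of factors)$\cdot\log X$ would then also suffice.

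\textbf{Main obstacle.} The main point is Step 2. It must be confirmed that dividing by $s_2$ (the only exponent that can be as large as $\log M/\log2$) leaves all remaining exponents bounded by $\log M/\log3$. This holds because $p\ge3$ for the remaining primes, and $3\nmid ML$ by \cref{lem:s7-congruences}, so in fact $p\ge5$ and the bound improves to $X=\log M/\log5$, giving ample room. The remaining numerical comparison at ${\bar\omega}\ge 2^{249}$ is routine.
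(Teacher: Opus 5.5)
Your proposal is correct and follows the paper's proof. Both arguments combine the lower bound $Q_MQ_L/s_2>\frac{\log 2}{2.017}\,{\bar\omega}\log{\bar\omega}$ from the two-masses lemma with two further facts: the pairwise coprime odd-prime exponents multiply to a divisor of $\operatorname{lcm}(1,\ldots,N)$, and $\psi(x)<1.03883x$. The paper takes $N=\lfloor\log M/\log 5\rfloor$ using $3\nmid ML$, which is your refinement, while your $\log 3$ version already suffices since $1.03883/\log 3<1$.
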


\begin{proof}
Suppose $M$ is composite. By \cref{lem:s7-coprime}, all localized exponents are at least $2$ and thus pairwise coprime.  

We will obtain incompatible upper and lower bounds on $\log (Q_MQ_L/s_2)$, where $Q_M,Q_L$ are two products of exponents from \cref{lem:s7-two-masses}.
By \cref{lem:s7-congruences}, every odd prime divisor of $ML$ is at least $5$. For any odd prime divisor $p\neq5$, since $p^{s_p}\le M< 2{\bar\omega}< 2\cdot5^{s_5}<5^{s_5+1}$, we have $s_p\le s_5$. The localized exponents of odd primes are therefore at most
\[
  N=\left\lfloor\frac{\log M}{\log5}\right\rfloor.
\]
Since these exponents are pairwise coprime, their product divides $\operatorname{lcm}(1,\ldots,N)$. Applying \cref{prop:s7-RS-psi}, and then using $N\le\log M/\log5$, gives
\[\log \bigl(\frac{Q_MQ_L}{s_2}\bigr)\le\psi(N)<1.03883N\le\frac{1.03883}{\log5}\log M.\]
In the lower direction, \cref{lem:s7-two-masses} gives
\[
  \log \bigl(\frac{Q_MQ_L}{s_2}\bigr)>\log {\bar\omega}+\log\log {\bar\omega}+\log\bigl(\frac{\log2}{2.017}\bigr).
\]
Since $d\ge 59$ by \cref{lem:s7-one-constant-d} and ${\bar\omega}>d(d-1)$, $\log M=\log({\bar\omega}+d)<\log {\bar\omega}+\log(1+1/(d-1))<\log {\bar\omega}+1/58$. Comparison of the last two inequalities for $\log (Q_MQ_L/s_2)$ yields
\[\left(1-\frac{1.03883}{\log5}\right)\log {\bar\omega}+\log\log {\bar\omega}+\log\bigl(\frac{\log2}{2.017}\bigr)-\frac{1.03883}{58\log5}<0.\]
The inequality fails when ${\bar\omega}\ge6$, contradicting \cref{prop:s7-gordon}.
This contradiction proves that $M$ is prime. 
\end{proof}

For the rest of the argument, define
\begin{equation}\label{eq:s7-VPT}
  \mathcal V=\{s_p:p\mid L,\ p\text{ odd}\},\qquad
  P=\prod_{v\in\mathcal V}v,\qquad
  T=\sum_{v\in\mathcal V}\frac1v.
\end{equation}
By \cref{lem:s7-coprime}, the members of $\mathcal V$ are distinct, at least $2$, and pairwise coprime. Since $2\mid L$, the reciprocal mass of $L$ is $S_L=1/s_2+T$. We now relate this mass to the size of $d$.

\begin{lemma}\label{lem:s7-constraints}
For $p\mid L$, we have
\begin{equation}\label{eq:s7-offset}
  p<(d-1)^2.
\end{equation}
Moreover, one has
\begin{equation}\label{eq:s7-denominator}
  \begin{aligned}
    &\log(d-1)<\log P+\log\frac{2.017}{\log2},\\
    &T>1-\frac{0.72793}{\log {\bar\omega}}.
  \end{aligned}
\end{equation}
In particular, $T>0.995$ always, and $T>0.9999$ if $\log {\bar\omega}\ge125000$.
\end{lemma}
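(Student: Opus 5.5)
The plan is to extract all three claims from three ingredients: the identity $dL-(d-1){\bar\omega}=1$ in \eqref{eq:s7-identities}, the primality of $M$ from \cref{prop:s7-prime-M}, and the reduced-denominator argument from the proof of \cref{lem:s7-two-masses}.

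\emph{Step 1: the bound \eqref{eq:s7-offset}.} Let $p\mid L$ and write $p^{s_p}={\bar\omega}+k$ with $1\le k\le d$, using \eqref{eq:s7-localization}.
\begin{itemize}
\item Since $M$ is prime by \cref{prop:s7-prime-M}, \cref{lem:s7-coprime} shows that only $M$ has exponent $1$. So $s_p\ge2$ for every $p\mid L$, and hence $p^{s_p}\ne M$.
\item It follows that $k\le d-1$.
\item Reducing $dL-(d-1){\bar\omega}=1$ modulo $p$ gives $(d-1){\bar\omega}\equiv-1\pmod p$.
\item Substituting ${\bar\omega}=p^{s_p}-k$ gives $(d-1)k\equiv1\pmod p$.
\item The integer $(d-1)k-1$ is positive, because $d\ge59$ and $k\ge1$, and it is divisible by $p$.
\end{itemize}
Therefore $p\le(d-1)k-1\le(d-1)^2-1<(d-1)^2$.

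\emph{Step 2: the denominator bound.} We have $S_L=1/s_2+T$. By \cref{lem:s7-coprime}, the exponents $s_2$ and the members of $\mathcal V$ are pairwise coprime. The argument in the proof of \cref{lem:s7-two-masses} then shows that $1-S_L$ has reduced denominator exactly $s_2P$, so
\[
\frac1{s_2P}\le 1-S_L<\frac{2.017}{(d-1)\log M}
\]
by \eqref{eq:s7-B-error}. Since $2^{s_2}\le M$, we have $s_2\le\log M/\log2$. Inserting this gives $d-1<2.017P/\log2$, and taking logarithms yields the first line of \eqref{eq:s7-denominator}.

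\emph{Step 3: the lower bound on $T$.} The proof of \cref{lem:s7-two-masses} gives
\[
S_L\ge\frac{\log L}{\log M}=1-\frac{\log(M/L)}{\log M}.
\]
\begin{itemize}
\item By \eqref{eq:s7-relative gap} and $d\ge59$ (\cref{lem:s7-one-constant-d}), $\log(M/L)<(M-L)/L<2.017/58<0.034776$.
\item Since $2^{s_2}>{\bar\omega}$, we have $1/s_2<\log2/\log{\bar\omega}$.
\item Using $\log M>\log{\bar\omega}$ and combining, $T=S_L-1/s_2>1-(0.034776+\log2)/\log{\bar\omega}$.
\item Since $0.034776+0.693148<0.72793$, this gives the second line of \eqref{eq:s7-denominator}.
\end{itemize}

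\emph{Step 4: the numerical consequences.} \cref{prop:s7-gordon} gives ${\bar\omega}\ge2^{249}$, so $\log{\bar\omega}>172$. Then $0.72793/172<0.005$, which gives $T>0.995$. If $\log{\bar\omega}\ge125000$, the error term is below $6\cdot10^{-6}$, which gives $T>0.9999$.

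The only delicate point is Step 1. The naive congruence argument yields only $p\le d(d-1)-1$. Reaching $(d-1)^2$ requires excluding $k=d$, and this uses the primality of $M$: it forces $s_p\ge2$, so $p^{s_p}\ne M$. The remaining steps are bookkeeping with constants already established.
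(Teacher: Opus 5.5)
Your proposal is correct and follows the paper's proof essentially step for step: the same exclusion of the endpoint $k=d$, the same divisibility $p\mid (d-1)k-1$ (the paper writes it as $\gcd(L,(d-1)j_p-1)=p$ rather than as a congruence modulo $p$), the same reduced-denominator bound using $s_2\le\log M/\log 2$, and the same constant $2.017/58+\log 2<0.72793$. One small correction to your closing remark: excluding $k=d$ does not need the primality of $M$, because $p^{s_p}=M$ would give $p\mid\gcd(M,L)$, and $\gcd(M,L)=1$ already follows from the squarefreeness of $ML$ in \eqref{eq:s7-localization}; this is how the paper argues.
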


\begin{proof}
For $p\mid L$, write $p^{s_p}={\bar\omega}+j_p$ with localized exponent $s_p$.
Localization initially gives $1\le j_p\le d$. The endpoint $j_p=d$ would imply $p^{s_p}=M$, hence $p\mid M$, contradicting $\gcd(M,L)=1$. Therefore, $1\le j_p\le d-1$.

Substituting ${\bar\omega}=p^{s_p}-j_p$ into $dL-(d-1){\bar\omega}=1$ in \eqref{eq:s7-identities} gives
\[
  dL=(d-1)p^{s_p}-\bigl((d-1)j_p-1\bigr).
\]
The original identity also gives $dL\equiv1\pmod{d-1}$, so $\gcd(L,d-1)=1$. Taking the gcd with $L$ in the displayed equation therefore yields
\[
  \gcd\bigl(L,(d-1)j_p-1\bigr) =\gcd\bigl(L,(d-1)p^{s_p}\bigr) =\gcd(L,p^{s_p})=p.
\]
The last equality uses the squarefreeness of $L$. The positive integer $(d-1)j_p-1$ is at most $(d-1)^2-1$, so its prime divisor $p$ is smaller than $(d-1)^2$.

The denominator argument in \cref{lem:s7-two-masses} applies to $S_L=1/s_2+T$, whose reduced denominator is $s_2P$. Since $S_L<1$, we obtain
\[
  \frac1{s_2P}\le1-S_L <\frac{2.017}{(d-1)\log M}.
\]
Localization gives $s_2\log2\le\log M$, and then rearranging gives
\[
  d-1<\frac{2.017 s_2P}{\log M} \le\frac{2.017 P}{\log2}.
\]
Taking logarithms proves the first inequality in \eqref{eq:s7-denominator}. For the second, using $S_L=1/s_2+T$, $s_2\log2>\log {\bar\omega}$, $d\ge59$ and $\log M>\log {\bar\omega}>249\log2$ gives,
\[
  T=S_L-\frac{1}{s_2}>1-\frac{2.017}{(d-1)\log M}-\frac{\log2}{\log {\bar\omega}}>1-\frac{0.72793}{\log {\bar\omega}}>0.995.
\]
Replacing $\log {\bar\omega}$ by $125000$ in the same lower bound gives a number greater than $0.9999$.
\end{proof}

\begin{lemma}\label{lem:s7-exponent-cap}
Every $v\in\mathcal V$ satisfies $v\le5040$. If $v\le N$ for every $v\in\mathcal V$, then
\begin{equation}\label{eq:s7-D-cap}
  \log(d-1)<\psi(N)+\log\frac{2.017}{\log2}
  <1.03883N+1.069.
\end{equation}
Moreover,
\begin{equation}\label{eq:s7-moving}
  v<\frac{2520\log {\bar\omega}}{\log {\bar\omega}-\log(d-1)-0.001}
  \qquad(v\in\mathcal V).
\end{equation}
\end{lemma}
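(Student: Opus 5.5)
The plan is to derive \eqref{eq:s7-moving} first from Laurent's estimate, then get $v\le 5040$ as a corollary of it, and obtain \eqref{eq:s7-D-cap} separately from \cref{lem:s7-constraints} and \cref{prop:s7-RS-psi}.

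\emph{Step 1: the linear form.} Fix $v=s_p\in\mathcal V$, so $p\mid L$ is an odd prime. By \cref{lem:s7-congruences}, $2\mid L$ as well, so $2^{s_2}$ and $p^{v}$ both lie in $({\bar\omega},M]$. As in \eqref{eq:section5-Lambda-upper}, put $\Lambda=v\log p-s_2\log 2$. Since $\gcd(M,L)=1$, neither power equals $M$; the proof of \cref{lem:s7-constraints} in fact places both in $[{\bar\omega}+1,{\bar\omega}+d-1]$. This gives
\[
0<|\Lambda|<\frac{d-2}{{\bar\omega}+1}<\frac{d-1}{{\bar\omega}}.
\]
Applying \cref{lem:section5-Laurent} with $\ell(2)=1$ and $\ell(p)=\log p$ then yields
\[
\log{\bar\omega}-\log(d-1)<25.2H^2\log p .
\]

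\emph{Step 2: forcing $H=10$.} I would repeat the argument of \cref{lem:section5-general-cap}. We have $B=v+s_2/\log p$ with $v,s_2\log2\le\log M$. The identity ${\bar\omega}>d(d-1)$ from \eqref{eq:s7-identities} gives $\log(d-1)<\tfrac12\log{\bar\omega}$, so Step 1 yields $\log{\bar\omega}<50.4H^2\log p$. Hence
\[
B<\frac{\log M}{\log p}\Bigl(1+\frac1{\log 2}\Bigr)<123.5H^2+o(1)\le 130H^2 .
\]
Together with $B=e^{H-0.38}>150H^2$ whenever $H>10$, this forces $H=10$. Step 1 then becomes $\log p>\bigl(\log{\bar\omega}-\log(d-1)\bigr)/2520$.

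\emph{Step 3: the exponent bounds.} From $p^v\le M$ and $d\ge59$ (\cref{lem:s7-one-constant-d}) we get $v\log p\le\log M<\log{\bar\omega}+1/58$. Combined with Step 2 this gives
\[
v<\frac{2520(\log{\bar\omega}+1/58)}{\log{\bar\omega}-\log(d-1)} .
\]
Since ${\bar\omega}\ge 2^{249}$, the additive $1/58$ in the numerator can be traded for the $-0.001$ in the denominator, which is \eqref{eq:s7-moving}. I expect this bookkeeping of small constants to be the only delicate point. For the uniform cap, insert $\log(d-1)<\tfrac12\log{\bar\omega}$ into \eqref{eq:s7-moving}:
\[
v<\frac{5040}{1-0.002/\log{\bar\omega}}<5040.1 ,
\]
and since $v$ is an integer, $v\le5040$.

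\emph{Step 4: the bound on $d$.} The members of $\mathcal V$ are pairwise coprime by \cref{lem:s7-coprime}. If each is at most $N$, their product $P$ divides $\operatorname{lcm}(1,\ldots,N)$, so $\log P\le\psi(N)$. Inserting this into the first inequality of \eqref{eq:s7-denominator} gives the first half of \eqref{eq:s7-D-cap}. The second half follows from \eqref{eq:RS-335} together with $\log(2.017/\log2)<1.069$.
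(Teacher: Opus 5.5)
Your overall route is the paper's: Laurent's estimate for the pair $(2,p)$, the $H=10$ forcing argument of \cref{lem:section5-general-cap}, a rearrangement, and $P\mid\operatorname{lcm}(1,\ldots,N)$ together with \eqref{eq:RS-335} for \eqref{eq:s7-D-cap}. Steps 1, 2 and 4 are fine. There is one small attribution slip: $2\mid L$ needs \cref{prop:s7-prime-M} ($M$ is prime, hence odd) in addition to $2\mid ML$ from \cref{lem:s7-congruences}.

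The step that fails is the one you flagged in Step 3. Write $X=\log\bar\omega$ and $D=\log(d-1)$. Passing from $v<2520(X+\frac1{58})/(X-D)$ to \eqref{eq:s7-moving} requires
\[
\Bigl(X+\tfrac1{58}\Bigr)(X-D-0.001)\le X(X-D),
\quad\text{i.e.}\quad
\Bigl(\tfrac1{58}-0.001\Bigr)X\le\tfrac1{58}(D+0.001),
\]
which amounts to $X\le 1.062\,(D+0.001)$. But $\bar\omega>d(d-1)>(d-1)^2$ gives $X>2D$, and $D\ge\log58$. So the required inequality is false for every admissible parameter, and a larger $\bar\omega$ only makes it worse. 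An additive error of $1/58$ in the numerator cannot be absorbed by a slack of $0.001$ in the denominator.

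The repair is to use $\bar\omega\ge2^{249}$ inside the additive error itself, not afterwards. Since $p^v\le M-1$ and $(d-1)^2<\bar\omega$,
\[
v\log p\le X+\log\Bigl(1+\frac{d-1}{\bar\omega}\Bigr)<X+\bar\omega^{-1/2}<X+0.001 .
\]
The paper gets the same thing from $\log(1+d/\bar\omega)<\log(1+2/\sqrt{\bar\omega})<0.001$. Then $v<2520(X+\epsilon)/(X-D)$ with $\epsilon=0.001$. Since $(X+\epsilon)(X-D-\epsilon)=X(X-D)-\epsilon(D+\epsilon)<X(X-D)$, this gives \eqref{eq:s7-moving}. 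This is equivalent to the paper's rearrangement $v<2520+(D+0.001)/\log p<2520+(D+0.001)v/X$.

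Your cap $v\le5040$ never needed the trade. From your bound and $D<X/2$ you already get $v<5040\bigl(1+\frac1{58X}\bigr)<5040.6$, because $X\ge249\log2$. The paper instead obtains the cap first from $|\Lambda|<1/\sqrt{\bar\omega}$ and derives \eqref{eq:s7-moving} afterwards; that ordering difference is immaterial.
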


\begin{proof}
Fix an odd prime $p\mid L$, and write $v=s_p$. The two powers $2^u,p^v$ lie in $({\bar\omega},M)$. Let $\Lambda=u\log2-v\log p$. Recalling $(d-1)^2<{\bar\omega}$ gives
\begin{equation}\label{eq:s7-Lambda-upper}
    0<|\Lambda|\le \log\frac{M-1}{{\bar\omega}}= \log\bigl(1+\frac{d-1}{{\bar\omega}}\bigr)< \frac{d-1}{{\bar\omega}}< \frac{1}{\sqrt{\bar\omega}}.
\end{equation}
Also $(d-1)^2<{\bar\omega}$ gives $d^2<2{\bar\omega}$ and thus \cref{prop:s7-gordon} gives $\log(1+d/{\bar\omega})<\log(1+2/\sqrt{\bar\omega})<0.001$. Then applying the proof of  \cref{lem:section5-general-cap} with $\Lambda=u\log2-v\log p$ and $\eta_1=0.001$ gives
\[s_p< 5040+\frac{0.001}{\log 2}< 5041. \]
%
As $v$ is an integer, $v\le5040$, as required.

Applying the bound $|\Lambda|<(d-1)/{\bar\omega}$ to the proof of \cref{lem:section5-general-cap}, and then $\log {\bar\omega}<2520\log p+ \log(d-1)$, gives
\[v<\frac{\log {\bar\omega}+0.001}{\log p}< 2520+ \frac{\log(d-1)+0.001}{\log p}< 2520+ \frac{\log(d-1)+0.001}{\log {\bar\omega}}v,
\]
where the last inequality comes from the localization $p^v>{\bar\omega}$. Hence, $v<\frac{2520\log {\bar\omega}}{\log {\bar\omega}-\log(d-1)-0.001}$.
This proves \eqref{eq:s7-moving}.

Finally, if all members of $\mathcal V$ are at most $N$, pairwise coprimality implies $P\mid\operatorname{lcm}(1,\ldots,N)$. Therefore \eqref{eq:s7-denominator} and \cref{prop:s7-RS-psi} give \eqref{eq:s7-D-cap}.
\end{proof}

\subsection{Exclusion of the large-weight range}

\begin{lemma}\label{lem:s7-weighted}
If $\mathcal A\subseteq[24,2600]\cap\mathbb Z$ consists of pairwise coprime integers, then
\[
  \sum_{v\in\mathcal A}\frac1v<0.999.
\]
\end{lemma}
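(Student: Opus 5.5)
The plan is to reduce the sum to a sum over distinct primes, exactly as in the proof of \cref{lem:section5-exponent-packing}, and then bound that prime sum numerically. Since the members of $\mathcal A$ are pairwise coprime and each is at least $24>1$, every $v\in\mathcal A$ has a prime factor. Assign to each $v$ a prime $r_v\mid v$, and choose it to be the \emph{smallest} prime divisor of $v$. Pairwise coprimality makes the primes $r_v$ distinct. The crude bound $1/v\le 1/r_v$ is far too weak here, because small primes would contribute nearly $\tfrac12+\tfrac13+\cdots$. So the key refinement is to keep the cofactor: write $v=r_v m_v$. Since $v\ge24$, we have $m_v\ge\lceil 24/r_v\rceil$, and $m_v$ has no prime factor below $r_v$.

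The main step is to bound the best possible contribution $c(r)$ attached to each prime $r$. Here $c(r)$ is the largest value of $1/v$ over $v\in[24,2600]$ whose least prime factor is $r$; we may take $v$ to be the smallest such integer. Concretely:
\begin{itemize}
\item for $r=2$ the best is $v=24$, giving $1/24$;
\item for $r=3$ the best is $v=27$, giving $1/27$;
\item for $r=5$ the best is $v=25$, giving $1/25$;
\item for $r=7$, $11$, $13$, $17$, $19$, $23$ the best is $r^2$, or $r\cdot$(a prime $\ge r$) when that is $\ge24$; for example $7\cdot 7=49$, $11\cdot 11=121$, and so on;
\item for primes $r\ge 29$ the best is $v=r$ itself.
\end{itemize}
Thus
\[
\sum_{v\in\mathcal A}\frac1v\le \frac1{24}+\frac1{27}+\frac1{25}+\sum_{7\le r\le 23}\frac1{r^2}+\sum_{29\le r\le 2600}\frac1r .
\]
The pairwise coprimality is used only through the distinctness of the $r_v$. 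The justification for this bound is that each prime $r$ is used at most once, and $1/v\le c(r_v)$.

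It remains to estimate the right side numerically. The first block is about $0.118$, and $\sum_{7\le r\le 23}r^{-2}\approx0.038$. For the tail, apply \eqref{eq:section5-Dusart-simple} with $x=2600$, which gives $\sum_{p\le2600}1/p<\log\log2600+0.278\approx 2.337$. Subtracting $\sum_{p\le23}1/p\approx1.534$ leaves a tail of at most about $0.803$. The total is then roughly $0.959$, which is below $0.999$.

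The obstacle I expect is precision. The Rosser--Schoenfeld constant $0.278$ is loose: the true value of $\sum_{p\le 2600}1/p$ is nearer $\log\log 2600+0.2615$, so the margin should actually be comfortable. Still, if the crude estimate lands too close to $0.999$, I would instead compute $\sum_{29\le p\le2600}1/p$ exactly. That is a finite computation that can be stated as a direct evaluation, and it only improves the bound.
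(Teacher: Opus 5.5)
Your reduction is correct and is essentially the paper's argument. Coprimality lets you charge each $v$ to a distinct prime, and $v\ge 24$ is what makes the small primes cheap. Your least-prime-factor charging is in fact slightly sharper than the paper's weights. The paper uses $b_2=b_3=1/24$, $b_q=1/q^2$ for $5\le q\le 23$ and $b_q=1/q$ for $q\ge 29$, together with the claim $1/v\le\sum_{q\mid v}b_q$. You pay $1/24+1/27$ instead of $1/12$ for the primes $2,3$. Also, $v=r_vm_v$ with $m_v\ge r_v$ gives $v\ge r_v^2$ directly, so you avoid the two-prime case.

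The gap is in the numerics, and it is not cosmetic: the step as you wrote it fails.
\begin{itemize}
\item $\sum_{p\le 23}1/p\approx 1.4990$, not $1.534$.
\item $\sum_{7\le r\le 23}r^{-2}\approx 0.0427$, not $0.038$.
\item $\log\log 2600+0.278\approx 2.3402$, not $2.337$.
\end{itemize}
With correct arithmetic, \eqref{eq:section5-Dusart-simple} gives a tail of about $0.8412$ and a total of about
\[
0.1187+0.0427+0.8412\approx 1.0027>0.999 .
\]
So the constant $0.278$ does not prove the lemma. The margin you thought was comfortable is actually negative, which means your fallback was needed but never triggered. Even granting your figure $2.337$, the correct subtraction gives about $0.9994>0.999$.

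The repair is to use the full Rosser--Schoenfeld bound \eqref{eq:RS-317}, as the paper does. Since $\log\log 2600\approx 2.06220$, $B\approx 0.26150$ and $1/(2\log^2 2600)\approx 0.00809$, this gives $\sum_{p\le 2600}1/p<2.332$. The tail is then below $0.8331$, and your total is below $0.9945$. The paper's own weights give about $0.9989$ under this same estimate, so your sharper weights buy real room. An exact evaluation of $\sum_{29\le p\le 2600}1/p$ would also work.
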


\begin{proof}
The point of the weights below is that a prime can divide at most one member of $\mathcal A$. Define
\[
  b_2=b_3=\frac1{24},\qquad
  b_q=\frac1{q^2}\quad(q=5,7,11,13,17,19,23),\qquad
  b_q=\frac1q\quad(q\ge29).
\]
For every integer $v\ge24$, we claim that
\[
  \frac1v\le\sum_{q\mid v}b_q.
\]
Indeed, if $2$ or $3$ divides $v$, its weight alone suffices because $1/v\le1/24$. If a prime $q\ge29$ divides $v$, then $1/v\le1/q=b_q$, so again a single weight suffices. Otherwise every prime divisor belongs to $\{5,7,11,13,17,19,23\}$. If there is only one such prime $q$, then $v$ is a power of $q$ and $v\ge24>q$ implies $v\ge q^2$. If there are two distinct prime divisors $q,r$, then
\[
  b_q+b_r=\frac1{q^2}+\frac1{r^2}
  \ge\frac2{qr}\ge\frac1v,
\]
because $qr\mid v$. These cases prove the claim.

Now put $\mathcal Q=\{2,3,5,7,11,13,17,19,23\}$. By the coprimality of $\mathcal A$, each prime occurs at most once when we sum the claim over $v\in\mathcal A$, and no prime divisor exceeds $2600$. Consequently,
\[
  \sum_{v\in\mathcal A}\frac1v
  \le\sum_{\substack{29\le q\le2600\\q\text{ prime}}}\frac1q
     +\frac1{12}+\sum_{q\in\mathcal Q\setminus\{2,3\}}\frac1{q^2}.
\]
Apply the upper reciprocal-prime bound stated in \cref{prop:quoted-RS-reciprocal} at $x=2600$. The right side is smaller than
\[ \log\log2600+0.2615+\frac1{2\log^2 2600} -\sum_{q\in\mathcal Q}\frac1q+\frac1{12}+\sum_{q\in\mathcal Q\setminus\{2,3\}}\frac1{q^2} <0.999.
\]
\end{proof}

\begin{theorem}\label{prop:s7-large} If a nontrivial $1$-perfect code exists in $J(n,w)$, then $w<\exp(125000)$.
\end{theorem}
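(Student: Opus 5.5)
We argue by contradiction: suppose $\log{\bar\omega}\ge125000$ (note ${\bar\omega}\ge w$, so it suffices to treat this case). The plan is to show that every member of $\mathcal V$ lies in the window $[24,2600]$. Since $\mathcal V$ is pairwise coprime by \cref{lem:s7-coprime}, \cref{lem:s7-weighted} then gives $T<0.999$. This contradicts $T>0.9999$, which \cref{lem:s7-constraints} provides for $\log{\bar\omega}\ge125000$. Everything therefore reduces to bootstrapping \cref{lem:s7-exponent-cap} into an upper bound $v\le2600$, and \cref{lem:s7-constraints} into a lower bound $v\ge24$.

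\emph{Upper bound.} Start from $v\le5040$ for all $v\in\mathcal V$. Applying \eqref{eq:s7-D-cap} with $N=5040$ gives an absolute bound, roughly
\[
\log(d-1)<1.03883\cdot5040+1.069<5237.
\]
Insert this into \eqref{eq:s7-moving}. The right side of \eqref{eq:s7-moving} decreases in $\log{\bar\omega}$ once $\log(d-1)$ is fixed, so we may put $\log{\bar\omega}=125000$. This yields
\[
v<\frac{2520\cdot125000}{125000-5238}<2631.
\]
Iterating once more with $N=2630$ in \eqref{eq:s7-D-cap} gives $\log(d-1)<2734$. Then \eqref{eq:s7-moving} gives $v<2520/(1-2735/125000)\approx2576.4$. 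Hence $v\le2576\le2600$. A third pass refines this to $\log(d-1)<1.03883\cdot2576+1.069<2678$, which is what we use below.

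\emph{Lower bound.} For an odd prime $p\mid L$ with $s_p=v$, \eqref{eq:s7-offset} gives $\log p<2\log(d-1)<5356$. Localization gives $p^v>{\bar\omega}$, so
\[
v>\frac{\log{\bar\omega}}{\log p}>\frac{125000}{5356}>23.3.
\]
Hence $v\ge24$. This bound only improves as $\log{\bar\omega}$ grows, because the bound on $\log(d-1)$ is absolute.

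Thus $\mathcal V\subseteq[24,2600]$ is a set of pairwise coprime integers. \cref{lem:s7-weighted} gives $T<0.999$, contradicting $T>0.9999$. Therefore $\log w\le\log{\bar\omega}<125000$.

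The main obstacle is the numerical bootstrapping. A single pass from $5040$ lands at about $2630$, just above the window of \cref{lem:s7-weighted}, so a second iteration is necessary. The lower bound $v\ge24$ is also tight, $125000/5356\approx23.3$, so the sharpened estimate $\log(d-1)<2678$ must be used rather than the cruder one. These constants need to be checked carefully.
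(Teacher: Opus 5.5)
Your proposal is correct and follows the paper's proof essentially step for step. It uses the same bootstrap alternating \eqref{eq:s7-D-cap} and \eqref{eq:s7-moving}: from $v\le 5040$ to $v<2631$, then $\log(d-1)<2734$, then $v\le 2576$, then a final bound on $\log(d-1)$. It then gives the same lower bound $v>\log{\bar\omega}/(2\log(d-1))>23$ via \eqref{eq:s7-offset}, and reaches the same contradiction between Lemma~\ref{lem:s7-weighted} and $T>0.9999$, with constants that are even marginally sharper ($2678$ versus the paper's $2679$).
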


\begin{proof} In fact, we show that $\log {\bar\omega}<125000$.
Suppose $\log {\bar\omega}\ge X:=125000$. We first make the exponent upper bound small enough to apply \cref{lem:s7-weighted}. By \cref{lem:s7-exponent-cap}, initially every $v\in\mathcal V$ is at most $5040$. If a numerical bound $\log(d-1)<D_0<X$ is available, then \eqref{eq:s7-moving} gives
\[ v<\frac{2520\log {\bar\omega}}{\log {\bar\omega}-D_0-0.001} \le\frac{2520X}{X-D_0-0.001}.
\]
The second inequality holds because $2520x/(x-D_0-0.001)$ is decreasing with $x$.
Since $v\le 5040$ for $v\in\mathcal V$, \cref{lem:s7-exponent-cap} gives the first upper bound of $\log(d-1)$:
\[ \log(d-1) <1.03883\cdot 5040+1.069< 5237.\]
Taking $D_0=5237$ gives
\[v< \frac{2520\cdot125000}{125000-5237-0.001}< 2631. \]
We iteratively obtain the second upper bound of $\log(d-1)$:
\[ \log(d-1) <1.03883\cdot 2630+1.069< 2734.\]
This conversely yields
\[v< \frac{2520\cdot125000}{125000-2734-0.001}< 2577. \]
Hence, $\log(d-1) <1.03883\cdot 2577+1.069< 2679$.

We now obtain a lower bound for the same exponents. For $p\mid L$ with localized power $p^v$, \eqref{eq:s7-offset} gives $\log p<2\log(d-1)$, whereas localization gives $v\log p>\log {\bar\omega}$. Therefore
\[ v>\frac{\log {\bar\omega}}{2\log(d-1)} >\frac{125000}{2\cdot2679}>23.
\]
In particular, $\mathcal V\subseteq[24,2600]\cap\mathbb Z$. Its members are pairwise coprime, so \cref{lem:s7-weighted} gives $T<0.999$. But \cref{lem:s7-constraints} gives $T>0.9999$ for $\log {\bar\omega}\ge125000$, a contradiction. Finally $w\le {\bar\omega}$, so the claimed bound on $w$ follows as well.
\end{proof}

\subsection{One bounded-range certificate and completion}

It remains to exclude the bounded range. The next certificate rules out small odd prime divisors of the localized exponents. Exponent $2$ is allowed throughout the calculation; its possible contribution to the reciprocal sum will be handled separately by a bound of $1/2$.

\begin{lemma}[Exact bounded-range certificate]\label{lem:s7-certificate}
For every integer $250\le u\le180338$ and every odd prime $q\le173$, there is no odd integer $b\ge3$ such that
\begin{equation}\label{eq:s7-finite-gap}
  0<|2^u-b^q|^2<\min\{2^u,b^q\}.
\end{equation}
Moreover,
\begin{equation}\label{eq:s7-finite-tail}
  R:=\sum_{\substack{173<q\le5040\\q\text{ prime}}}\frac1q
  <\frac{489}{1000}.
\end{equation}
\end{lemma}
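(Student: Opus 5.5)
This is a finite computational certificate, so the plan is to reduce it to an exact integer computation that terminates quickly. Fix an odd prime $q\le173$ and an exponent $u$ with $250\le u\le180338$. Condition \eqref{eq:s7-finite-gap} says that $b^q$ lies within distance $\sqrt{\min\{2^u,b^q\}}$ of $2^u$. That is a window of relative width roughly $2^{-u/2}$, which is extremely narrow. It forces $b$ to be essentially the real $q$th root of $2^u$.

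Put $x=2^{u/q}$. Applying the mean value theorem as in \cref{lem:s7-coprime}, any admissible $b$ satisfies $|b-x|<2^{u/2}/(q\,x^{q-1})$. Since $u\ge250$ and $q\ge3$, we have $x^{q-1}=2^{u(q-1)/q}\ge 2^{2u/3}$. Hence the right side is far smaller than $1$, and the only candidates are $b_0=\lfloor x\rfloor$ and $b_0+1$. Both are computable exactly by an integer $q$th-root routine (Newton's method on big integers).

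For each such candidate we check \eqref{eq:s7-finite-gap} exactly: compute $\Delta=2^u-b^q$ and test whether $0<\Delta^2<\min\{2^u,b^q\}$. Equality $\Delta=0$ is impossible anyway, since $q$ is odd and $b$ is odd. Oddness of $b$ can be imposed as an extra filter. The total work is about $40$ primes times $1.8\times10^5$ exponents, with numbers of at most $1.8\times10^5$ bits. This is heavy but feasible, and it is the computation I would report as the certificate.

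There is an alternative, conceptual reduction that trims the search. The inequality gives $|u\log2-q\log b|<2\cdot2^{-u/2}$, so $u/q$ would have to be a remarkably good approximation to $\log b/\log 2$. This does not eliminate the search, because $b$ varies with $u$. So the direct root-and-check computation is the main step.

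I expect the main obstacle to be the cost and rigor of this computation. The fix is to avoid floating point entirely. One computes $b_0$ by exact integer root extraction. One can also compare $\Delta^2$ with $2^u$ by bit lengths first: $\Delta^2<2^u$ requires $\operatorname{bitlen}(\Delta)\le u/2+1$. Only borderline cases then need a full multiplication. Reducing $2^u-b^q$ modulo a few large primes can also quickly certify that $|\Delta|$ is large before doing the exact check.

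For \eqref{eq:s7-finite-tail}, I would write
\[
R=\sum_{p\le5040}\frac1p-\sum_{p\le173}\frac1p .
\]
Bound the first sum by \cref{prop:quoted-RS-reciprocal} at $x=5040$, which gives $\log\log5040+B+1/(2\log^2 5040)$. Subtract the exact finite sum over the $40$ primes up to $173$, which can be evaluated with rational arithmetic or interval arithmetic. The margin is expected to be comfortable; if it is too tight, the sum over primes up to $5040$ can simply be computed exactly instead.
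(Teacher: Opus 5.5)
Your argument is correct in outline, but you organize the certificate for the gap condition quite differently from the paper.

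\textbf{The gap condition.} You treat each of the roughly $7\times10^6$ pairs $(u,q)$ on its own. You extract the integer $q$th root $b_0$ of $2^u$ (inputs up to $180338$ bits), form $b_0^q$ and $(b_0+1)^q$, and test $0<\Delta^2<\min\{2^u,b^q\}$ directly.

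The paper never evaluates the inequality. It shows that any admissible odd $b$ satisfies $|2^{u/q}-b|<2^{-u(1/2-1/q)}<2^{-40}$. So it suffices that $2^{u/q}$ is not within $2^{-40}$ of an odd integer, which can be read off from the integer part and the first $40$ fractional binary digits. Writing $u=qk+r$, a single root $A_{q,r}=\lfloor 2^{K+r/q}\rfloor$ with $K=\lfloor U/q\rfloor+41$ supplies these digits for every $u$ in the residue class by a right shift and a mask. Only $3046$ root extractions are needed, and no $q$th powers are ever formed.

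The trade-off is this. Your version tests the exact condition and so can never produce a spurious failure, but it is orders of magnitude more expensive. The paper's version tests only a sufficient condition (a hit would have needed a finer check), but it is cheap and easy to audit.

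One small repair in your reduction. The mean value theorem gives $|b-x|=|\Delta|/(q\xi^{q-1})$ with $\xi$ between $b$ and $x$. When $b<x$ you therefore cannot put $x^{q-1}$ in the denominator. Instead use $b^q>2^u-2^{u/2}>2^{u-1}$ to get $\min\{b,x\}>2^{(u-1)/q}$, as the paper does. The conclusion $b\in\{b_0,b_0+1\}$ is unaffected.

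\textbf{The tail sum.} Your primary plan here fails numerically. One has $\log\log5040+B+1/(2\log^2 5040)\approx2.4114$ and $\sum_{p\le173}1/p\approx1.9174$. So the Rosser--Schoenfeld bound yields only $R<0.4940$, which does not give $R<0.489$. The actual value of $R$ is only marginally below $0.489$, so the margin is far from comfortable. The exact rational summation you list as a fallback is therefore necessary rather than optional, and it is exactly what the paper's program does.
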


\begin{proof}
We reduce the gap condition to a finite calculation with integer roots and binary digits. Put $t=2^{u/q}$. If \eqref{eq:s7-finite-gap} holds, then
\[
  |2^u-b^q|<2^{u/2},\qquad
  b^q>2^u-2^{u/2}>2^{u-1}.
\]
Both $t$ and $b$ are therefore greater than $2^{(u-1)/q}$. The mean value theorem yields
\begin{equation}\label{eq:s7-root-distance}
|t-b|<\frac{|t^q-b^q|}{q\,2^{(u-1)(q-1)/q}}= \frac{|2^u-b^q|}{q\,2^{(u-1)(q-1)/q}}< \frac{2^{u/2}}{q\,2^{(u-1)(q-1)/q}}
  <2^{-u(1/2-1/q)}<2^{-40}.
\end{equation}
Here $q\ge3$ and $u\ge250$ give the last inequality. Thus any forbidden pair would make $t$ lie within $2^{-40}$ of an odd integer.

We avoid floating-point approximation as follows. Write $u=qk+r$ with $0\le r<q$, and put $U=180338$. If $r=0$, then $t=2^k$ is an even integer. We may therefore restrict attention to $1\le r<q$.
For each such pair $(q,r)$, define
\[K=\left\lfloor\frac Uq\right\rfloor+41,\quad A_{q,r}=\left\lfloor2^{K+r/q}\right\rfloor
      =\left\lfloor\left(2^{qK+r}\right)^{1/q}\right\rfloor.
\]
One integer root $A_{q,r}$ will handle every $u$ with this residue $r$. Indeed, $k\le\lfloor U/q\rfloor$, so $K-k-40\ge1$. Using $\lfloor\lfloor y\rfloor/m\rfloor=\lfloor y/m\rfloor$ for positive integers $m$, we obtain the identity
\[
 \left\lfloor2^{40}t\right\rfloor =\left\lfloor\frac{A_{q,r}}{2^{K-k-40}}\right\rfloor.
\]
Let $c$ be the residue of this integer modulo $2^{40}$. Equivalently, $c=\lfloor2^{40}\{t\}\rfloor$, where $\{t\}$ denotes the fractional part of $t$. Hence
$c/2^{40}\le\{t\}<(c+1)/2^{40}$.
If $1\le c\le2^{40}-2$, then $\{t\}\in[2^{-40},1-2^{-40})$. The distance from $t$ to every integer is at least $2^{-40}$, contradicting \eqref{eq:s7-root-distance}. It is therefore sufficient to check that neither residue $0$ nor $2^{40}-1$ occurs.

The program in the next subsection performs exactly this check. It computes $A_{q,r}$ with integer arithmetic and independently verifies the root enclosure
\[
  A_{q,r}^{\,q}<2^{qK+r}<(A_{q,r}+1)^q.
\]
The inequalities are strict because $q\nmid qK+r$. It then obtains $c$ by an integer shift and a mask. The complete run checks $39(180338-250+1)=7\,023\,471$
pairs $(u,q)$, of which $255\,257$ have $r=0$. Every remaining pair has $1\le c\le2^{40}-2$. The number of shared integer roots is only $\sum_{\substack{3\le q\le173\\q\text{ prime}}}(q-1)=3046$.

Finally, the program adds the reciprocals in \eqref{eq:s7-finite-tail} as exact rational numbers and compares their sum with $489/1000$ by integer arithmetic. These checks prove both assertions of the certificate; no floating-point approximation is used.
\end{proof}

\begin{proposition}[Bounded binary exponents]\label{prop:s7-bounded}
No nontrivial $1$-perfect code has $250\le s_2\le180338$.
\end{proposition}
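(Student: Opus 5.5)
Assume for contradiction that $250\le s_2=u\le180338$; note $u\ge250$ is automatic from ${\bar\omega}\ge2^{249}$ and $2^{u}>{\bar\omega}$. The plan is to show that every $v\in\mathcal V$ has no odd prime factor $q\le173$. Then the coprime set $\mathcal V$ is too thin to carry the reciprocal mass $T>0.995$ required by \cref{lem:s7-constraints}.

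\textbf{Step 1 (certificate).} Fix an odd prime $p\mid L$ with $v=s_p$, and suppose an odd prime $q\le173$ divides $v$. Put $b=p^{v/q}$, an odd integer with $b\ge5$, so that $p^v=b^q$. Both $2^u$ and $b^q$ lie in $({\bar\omega},M)$; they are distinct, since one is even and the other odd. By \eqref{eq:s7-Lambda-upper} their difference is less than $d-1$. Since $(d-1)^2<{\bar\omega}<\min\{2^u,b^q\}$ by \eqref{eq:s7-identities}, we get
\[
0<|2^u-b^q|^2<(d-1)^2<\min\{2^u,b^q\},
\]
which is exactly \eqref{eq:s7-finite-gap}. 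This contradicts \cref{lem:s7-certificate}. Hence every $v\in\mathcal V$ has all its odd prime factors greater than $173$.

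\textbf{Step 2 (bounding $T$).} By \cref{lem:s7-exponent-cap}, each $v\le5040$, and the members of $\mathcal V$ are pairwise coprime. Each $v$ is of one of two kinds:
\begin{itemize}
\item $v$ is a power of $2$. By coprimality at most one $v$ is even, and such a $v$ satisfies $v\ge2$, contributing at most $1/2$.
\item $v$ has an odd prime factor $r_v$. Then $r_v>173$ and $r_v\le v\le5040$. Assigning $r_v$ to $v$ gives distinct primes by coprimality, and $1/v\le1/r_v$.
\end{itemize}
Summing, $T\le\tfrac12+R<\tfrac12+0.489=0.989$ by \eqref{eq:s7-finite-tail}. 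This contradicts $T>0.995$ from \cref{lem:s7-constraints}.

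\textbf{Main obstacle.} The delicate point is Step 1: it must produce the strict inequality $|2^u-b^q|^2<\min\{2^u,b^q\}$ in exactly the form the certificate uses. We must confirm two things:
\begin{itemize}
\item Both powers lie strictly below $M$. For $p^v$ this holds because $\gcd(M,L)=1$. For $2^u$ it holds because $M$ is an odd prime by \cref{prop:s7-prime-M}, while $2\mid L$ by \cref{lem:s7-congruences}.
\item The gap satisfies $|2^u-b^q|\le d-2$, so $|2^u-b^q|^2<(d-1)^2<{\bar\omega}$.
\end{itemize}
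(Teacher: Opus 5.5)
Your proposal is correct and follows the paper's proof. The certificate rules out every odd prime $q\le173$ dividing an exponent, via $b=p^{v/q}$ and the gap bound $|2^u-b^q|\le d-2<\sqrt{{\bar\omega}}$; pairwise coprimality then gives $T\le\tfrac12+R<0.989$, contradicting $T>0.995$. The paper applies the certificate only to the smallest prime factor of odd exponents and bounds the single even exponent by $\tfrac12$, while you treat only pure powers of $2$ separately; this yields the same bound.
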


\begin{proof} Let $u=s_2$.
Suppose $250\le u\le180338$. We bound $T$ by splitting its exponents according to parity. At most one $v\in\mathcal V$ is even, since two even exponents would not be coprime. If present, its contribution $1/v$ is at most $1/2$.

For every odd $v\in\mathcal V$, let $q_v$ be its smallest prime divisor. If $v=s_p$ for the odd prime $p\mid L$, put $b=p^{v/q_v}$. This is an odd integer at least $3$, and $b^{q_v}=p^v$. The two distinct powers $2^u,p^v$ lie in $({\bar\omega},M)$, so
\[
  0<|2^u-b^{q_v}|^2
  \le(d-1)^2<{\bar\omega}<\min\{2^u,b^{q_v}\}.
\]
Thus \cref{lem:s7-certificate} rules out $q_v\le173$. On the other hand, \cref{lem:s7-exponent-cap} gives $q_v\le v\le5040$. Distinct odd exponents have distinct chosen primes $q_v$, because the exponents are pairwise coprime. Also $1/v\le1/q_v$. Summing these bounds yields
\[
  T\le\frac12+\sum_{\substack{173<q\le5040\\q\text{ prime}}}\frac1q
  <\frac12+\frac{489}{1000}=0.989.
\]
This contradicts the universal lower bound $T>0.995$ in \cref{lem:s7-constraints}.
\end{proof}

\begin{theorem}\label{thm:no-one-perfect-codes-optimized}
There are no nontrivial $1$-perfect codes in $J(n,w)$.
\end{theorem}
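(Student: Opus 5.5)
The plan is to assemble the pieces already established in this section. Suppose a nontrivial $1$-perfect code exists in $J(n,w)$ with $n=2w+\delta$, $\delta\ge0$. By \cref{prop:s7-large} we have $\log{\bar\omega}<125000$. By \cref{prop:s7-gordon} we have ${\bar\omega}\ge2^{249}$. By \cref{lem:s7-congruences} the prime $2$ divides $ML$, and it does not divide the prime $M$ guaranteed by \cref{prop:s7-prime-M}, since $M={\bar\omega}+d$ is a large odd prime. Hence $2\mid L$ and a localized exponent $s_2=u$ exists with ${\bar\omega}<2^u\le M$. The whole proof therefore reduces to showing that $u$ falls into the range handled by \cref{prop:s7-bounded}, namely $250\le u\le180338$.

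For the lower end, localization gives $2^u>{\bar\omega}\ge2^{249}$, so $u\ge250$. One must be careful that ${\bar\omega}\ge n/2\ge2^{249}$ is strict enough. If ${\bar\omega}=2^{249}$ exactly, then $2^u>{\bar\omega}$ still forces $u\ge250$, so strictness holds automatically.

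For the upper end, localization gives $2^u\le M<2{\bar\omega}$, so
\[
u<\frac{\log{\bar\omega}}{\log2}+1<\frac{125000}{\log2}+1.
\]
Numerically, $125000/\log2\approx180337.0$. This gives $u\le180338$, provided the decimal is checked carefully: $125000/0.693147\approx 180336.8$, so $u\le180337$, comfortably inside the range.

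With $u$ in the certified range, \cref{prop:s7-bounded} yields a contradiction. No nontrivial $1$-perfect code exists, and combined with complementation this covers all $J(n,w)$.

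The main (minor) obstacle is the numerical endpoint check $\lfloor125000/\log2\rfloor+1\le180338$, together with confirming that $2$ divides $L$ rather than $M$. The latter follows because $M$ is prime and far exceeds $2$. All substantive work lies in the earlier results.
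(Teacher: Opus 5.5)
Your proposal is correct and matches the paper's proof: Gordon's bound gives $s_2\ge250$, \cref{prop:s7-large} caps $s_2$ from above, and \cref{prop:s7-bounded} gives the contradiction. The differences are cosmetic. You use $2^u\le M<2{\bar\omega}$, which gives $u\le180337$ (correctly computed from $125000/\log2\approx180336.88$), whereas the paper uses $\log M<\log{\bar\omega}+1$ to get $u\le180338$. You also make explicit that $2\mid L$ because $M$ is an odd prime, which the paper leaves implicit.
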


\begin{proof}
By complementation, it is enough to consider $n\ge2w$, so all preceding arguments apply. By \cref{prop:s7-large}, $\log {\bar\omega}<125000$. Let $u=s_2$. The localization  $2^u<M$ gives
\[
  u\log2<\log M=\log({\bar\omega}+d)<\log {\bar\omega}+1<125001<180339\log2.
\]
Consequently $u\le180338$. The lower bound $u\ge250$ was obtained from \cref{prop:s7-gordon} since $2^u>{\bar\omega}\ge 2^{249}$. This places $u$ in the interval excluded by \cref{prop:s7-bounded}, completing the contradiction.
\end{proof}

Together with \cref{thm:section5-two,prop:section5-small-unified,thm:no-four-perfect-codes}, this proves \cref{thm:main-result}.

\subsection{Reproducible exact verification}\label{subsec:s7-verification}

The following Python program check the results in \cref{lem:s7-certificate}. For each fixed $q$ and residue $u\bmod q$, one exact integer $q$th root at a sufficiently high binary scale gives $\lfloor2^{u/q}\rfloor$ and the first $40$ fractional binary digits by right shifts.
The complete run verifies all 7 023 471 exponent pairs using 3046 shared integer roots; the reciprocal tail is also summed exactly by integer arithmetic. All assertions concern integers or rational numbers, while the analytic estimates have been justified separately.

{\footnotesize
\begin{verbatim}
"""Exact certificate for the bounded-range lemma; requires SymPy."""
from fractions import Fraction
from math import isqrt
from sympy import integer_nthroot

LO, HI, B = 250, 180338, 40
MASK = (1 << B) - 1

def primes(n):
    s = bytearray(b'\1')*(n+1); s[:2] = b'\0\0'
    for p in range(2, isqrt(n)+1):
        if s[p]: s[p*p:n+1:p] = b'\0'*len(range(p*p, n+1, p))
    return [p for p in range(2, n+1) if s[p]]

P = primes(5040)
assert sum((Fraction(1,p) for p in P if p > 173), Fraction()) < Fraction(489,1000)

for q in (p for p in P if 3 <= p <= 173):
    K = HI//q + B + 1
    for r in range(1, q):
        first = LO + (r-LO) % q
        if first > HI: continue
        A, exact = integer_nthroot(1 << (q*K+r), q)
        assert not exact
        for u in range(first, HI+1, q):
            s = K - u//q
            a = A >> s
            f = (A >> (s-B)) & MASK
            brackets = (a, a+2) if a & 1 else (a-1, a+1)
            assert f != 0 if brackets[0] == a else f != MASK

print("PASS")
\end{verbatim}
}

\end{document}